\newtheorem{theorem}{Theorem}[section]
\newtheorem{proposition}[theorem]{Proposition}
\newtheorem{lemma}[theorem]{Lemma}
\newtheorem{corollary}[theorem]{Corollary}
\theoremstyle{definition}
\newtheorem{definition}[theorem]{Definition}
\newtheorem{statement}[theorem]{Statement}
\newtheorem{question}[theorem]{Question}
\DeclareMathOperator{\res}{\upharpoonright}
\newcommand{\ran}{\operatorname{ran}}
\newcommand{\dom}{\operatorname{dom}}
\renewcommand{\mod}{\text{ }\textrm{mod}\text{ }}
\newcommand{\seq}[1]{\langle #1 \rangle}
\newcommand{\set}[1]{\{ #1 \}}
\renewcommand{\setminus}{\smallsetminus}
\renewcommand{\emptyset}{\varnothing}
\newcommand{\emptystring}{\text{$\langle\rangle$}}
\newcommand{\RCA}{\mathsf{RCA}}
\newcommand{\RT}{\mathsf{RT}}
\newcommand{\TT}{\mathsf{TT}}
\newcommand{\D}{\mathsf{D}}
\newcommand{\uequiv}{\equiv_{\text{\upshape W}}}
\newcommand{\ured}{\leq_{\text{\upshape W}}}
\newcommand{\Ured}{<_{\text{\upshape W}}}
\newcommand{\nured}{\nleq_{\text{\upshape W}}}
\newcommand{\rank}{\mathrm{rk}}
\newcommand{\height}{\mathrm{ht}}
\newcommand{\block}{\mathrm{bl}}
\newcommand{\fopart}{{}^1}
\newcommand{\converges}{\downarrow}
\newcommand{\V}[1]{\mathsf{V}_{#1}}
\begin{document}

\title{The tree pigeonhole principle in the Weihrauch degrees}

\author{Damir D. Dzhafarov}
\address{Department of Mathematics\\
University of Connecticut\\
Storrs, Connecticut U.S.A.}
%341 Mansfield Road\\ Storrs, Connecticut 06269-1009 U.S.A.}
%\curraddr{}
\email{damir@math.uconn.edu}

\author{Reed Solomon}
\address{Department of Mathematics\\
University of Connecticut\\
Storrs, Connecticut U.S.A.}
%341 Mansfield Road\\ Storrs, Connecticut 06269-1009 U.S.A.}
%\curraddr{}
\email{solomon@math.uconn.edu}

\author{Manlio Valenti}
\address{Department of Mathematics\\
University of Wisconsin\\
Madison, Wisconsin U.S.A.}
\curraddr{Department of Computer Science\\
Swansea University\\
Swansea, U.K.}
\email{manliovalenti@gmail.com}

\thanks{Dzhafarov and Solomon were partially supported by a Focused Research Group grant from the National Science Foundation of the United States, DMS-1854355. The authors thank Leszek Ko\l{}odziejczyk, Ludovic Levy Patey, and Arno Pauly for valuable discussions at various stages of the writing of this article. They also thank the two anonymous referees for helpful comments and suggestions.}

\begin{abstract}
We study versions of the tree pigeonhole principle, $\TT^1$, in the context of Weihrauch-style computable analysis. The principle has previously been the subject of extensive research in reverse mathematics, an outstanding question of which investigation is whether $\TT^1$ is $\Pi^1_1$-conservative over the ordinary pigeonhole principle, $\RT^1$. Using the recently introduced notion of the first-order part of an instance-solution problem, we formulate the analogue of this question for Weihrauch reducibility, and give an affirmative answer. In combination with other results, we use this to show that unlike $\RT^1$, the problem $\TT^1$ is not Weihrauch requivalent to any first-order problem. Our proofs develop new combinatorial machinery for constructing and understanding solutions to instances of $\TT^1$.
\end{abstract}

\maketitle

\section{Introduction}\label{sec:intro}

The classical pigeonehole principle, that an infinite set partitioned into finitely many parts must contain at least one part that is infinite, is elementary yet deeply fascinating in terms of its logical content.  In computability theory and proof theory in particular, where the focus is on partitions of the set of natural numbers, it has been a subject of considerable interest. Formally, the statement is as follows, where we use $\omega$ to denote $\set{0,1,2,\ldots}$.

\begin{statement}[Infinitary pigeonhole principle]\label{def:IPHP}
	For every $k \geq 1$ and every $f : \omega \to k$, there exists $i < k$ such that $f^{-1}(\set{i})$ is infinite.
\end{statement}

\noindent In the study of models of arithmetic and reverse mathematics, the principle is non-trivial because of the possibility that the set of natural numbers, and the number $k$ above, may be non-standard. In computable analysis, on the other hand, it is non-trivial because of the fact that the number $i < k$ cannot, in general, be found by any uniform algorithm. These features have inspired extensive investigations into the computational and proof-theoretic aspects of the infinitary pigeonhole principle, and in its relationship to various other logical and mathematical statements. Standard references on reverse mathematics include Simpson~\cite{Simpson-2009}, Hirschfeldt~\cite{Hirschfeldt-2014}, and Dzhafarov and Mummert~\cite{DM-2022}. In \Cref{sec:background}, we review some rudiments of computable analysis in the style of Weihrauch (\cite{Weihrauch-1987,Weihrauch-1992}), which will be our focus here. We refer to~\cite[Chapter 4]{DM-2022} for more details and examples, and to Brattka, Gherardi, and Pauly~\cite{BGP-2021} for a comprehensive overview of the subject. For a general introduction to computability theory, see Soare~\cite{Soare-2016} and Downey and Hirschfeldt~\cite{DH-2009}.

A further point of interest is that the infinitary pigeonhole principle may be formulated as a special case of Ramsey's theorem.

\begin{statement}[Ramsey's theorem for singletons]
	For every $k \geq 1$ and every $f : \omega \to k$, there exists an infinite set $M \subseteq \omega$ monochromatic for $f$ (i.e., such that $f \res M$ is constant).
\end{statement}

\noindent We denote this statement by $\RT^1$.  The equivalence of the two statements is obvious: given $i < k$ such that $f^{-1}(\set{i})$ is infinite we have that $M = \set{x \in \omega: f(x) = i}$ is an infinite monochromatic set for $f$, and given $M$ we have that $f^{-1}(\set{\min M})$ is infinite. Importantly, this equivalence is uniform in the sense of Weihrauch reducibility (see, e.g.,~\cite{DM-2022}, Proposition 4.3.4).

In this paper, we are interested in a version of the pigeonhole principle not for partitions of $\omega$ but for partitions of trees. To state it, we begin with some definitions. We let $\omega^{<\omega}$ denote the set of all finite strings of natural numbers (i.e., functions $n \to \omega$ for some $n \in \omega$), and $2^{<\omega}$ the subset of $\omega^{<\omega}$ of all finite strings of $0$s and $1$s (i.e., functions $n \to 2$ for some $n \in \omega$). We will typically use lowercase Greek letters near the beginning of the alphabet ($\alpha,\beta,\gamma,\ldots$) for general elements of $\omega^{<\omega}$, and lowercase Greek letters near the middle of the alphabet ($\sigma,\tau,\rho,\ldots$) for elements specifically of $2^{<\omega}$. For $n \in \omega$, we let $\omega^{< n}$ and $2^{< n}$ denote the respective restrictions of $\omega^{<\omega}$ and $2^{<\omega}$ to strings of length less than $n$. We denote the length of a string $\alpha \in \omega^{<\omega}$ by $|\alpha|$, and let $\emptystring$ denote the empty string (i.e., the unique string of length $0$). We use $\preceq$ for the reflexive prefix relation on $\omega^{<\omega}$, and $\prec$ for the irreflexive. For $\alpha,\beta \in \omega^{<\omega}$ we write $\alpha \mid \beta$ if $\alpha \npreceq \beta$ and $\beta \npreceq \alpha$. If $\alpha \mid \beta$ then $\alpha$ and $\beta$ are called \emph{incomparable}, and otherwise they are called \emph{comparable}. We write $\alpha\beta$ for the concatenation of $\alpha$ by $\beta$ (i.e., the string $\gamma$ of length $|\alpha|+|\beta|$ with $\gamma(x) = \alpha(x)$ for all $x < |\alpha|$ and $\gamma(x) = \beta(x-|\alpha|)$ for $|\alpha| \leq x < |\alpha|+|\beta|$). For $s \in \omega$, we write $0^s$ for the string $\alpha$ of length $s$ with $\alpha(x) = 0$ for all $x < s$, and similarly for $1^s$.

\begin{definition}
	Fix $S \subseteq 2^{<\omega}$.
	\begin{enumerate}
		%\item We write $S \cong 2^{<\omega}$ if $(S,\preceq)$ and $(2^{<\omega},\preceq)$ are isomorphic as partial orders.
		\item For $n \in \omega \cup \set{\omega}$, we write $S \cong 2^{< n}$ if $(S,\preceq)$ and $(2^{< n},\preceq)$ are isomorphic as partial orders.
		\item For $k \geq 1$, a \emph{$k$-coloring of $S$} is a map $f : S \to k = \set{0,\ldots,k-1}$.
		\item A \emph{coloring of $S$} is a $k$-coloring of $S$ for some $k \geq 1$.
		\item A set $H \subseteq S$ is \emph{monochromatic} for a coloring $f$ of $S$ if $f$ is constant on $H$. We call $f(\min H)$ the \emph{color of $H$ under $f$}.
	\end{enumerate} 
\end{definition}

\noindent The focus of our investigation is the following principle, originally formulated by Chubb, Hirst, and McNicholl~\cite{CHM-2009}.

\begin{statement}[Tree pigeonhole principle]\label{def:TPHP}
	For every $k \geq 1$ and every $f : 2^{<\omega} \to k$, there exists $H \cong 2^{<\omega}$ that is monochromatic for $f$.
\end{statement}

\noindent We denote this statement by $\TT^1$. Its proof is straightforward, though less trivial than that of $\RT^1$. Consider a coloring $f : 2^{<\omega} \to 2$, for simplicity. If $\set{\tau \in 2^{<\omega} : f(\tau) = 0}$ is dense (meaning that for every $\sigma \in 2^{<\omega}$ there exists $\tau \succeq \sigma$ with $f(\tau) = 0$) then an $H \cong 2^{<\omega}$ monochromatic for $f$ with color $0$ can be easily constructed by unbounded search. Otherwise, there exists $\sigma \in 2^{<\omega}$ such that $f(\tau) = 1$ for all $\tau \succeq \sigma$, and then $H = \set{\tau \in 2^{<\omega} : \tau \succeq \sigma} \cong 2^{<\omega}$ is monochromatic for $f$ with color $1$. Such an $H$ is sometimes called a monochromatic \emph{cone}, leading to this proof being called a ``dense-or-cone'' argument. More generally, given $f : 2^{<\omega} \to k$, an iteration of this argument reveals an $i < k$ and a $\sigma \in 2^{<\omega}$ such that $\set{\tau \in 2^{<\omega} : f(\tau) = i}$ is dense above $\sigma$, from which an $H \cong 2^{<\omega}$ monochromatic for $f$ with color $i$ may be constructed. We will explore variations on ``dense-or-cone'' arguments in \Cref{sec:dense-or-cone}.

In the literature, $\TT^1$ and its generalizations to higher dimensions have collectively come to be called the \emph{tree theorem}. However, it is worth emphasizing that the sets $H \cong 2^{<\omega}$ it deals with are not trees in the usual sense used in computability theory. More precisely, a set $H \cong 2^{<\omega}$ need not be closed downward under $\preceq$. The notion also differs from the usual sense of trees used in descriptive set theory, in that such an $H$ need not preserve meets (i.e., least common prefixes). A partition principle better suited to the latter type of structure is the \emph{Halpern-L\"{a}uchli theorem} (or \emph{Milliken's tree theorem}, as it is known in higher dimensions) which generalizes the Chubb-Hirst-McNicholl version we will focus on here. (See Todor\v{c}evi\'{c}~\cite{Todorcevic-2010} and Dobrinen~\cite{Dobrinen-2018} for more details, and Angl\'{e}s d'Auriac, et al.~\cite{ACDMP-2022} for a recent computability-theoretic analysis.) We will use the word ``tree'' exclusively in the computability-theoretic sense throughout this paper, and refer to \Cref{def:TPHP} and its variants (introduced below) only by $\TT^1$ and similar initialisms.

It is well-known that $\RT^1$ is a consequence of $\TT^1$. The following argument, which we include for completeness, was first noted in~\cite[proof of Theorem 1.5]{CHM-2009}. Given $g : \omega \to k$, define $f : 2^{<\omega} \to k$ by $f(\sigma) = g(|\sigma|)$. By the tree pigeonhole principle, we may fix an $H \cong 2^{<\omega}$ monochromatic for $f$, say with color $i < k$. Then $\set{|\sigma| : \sigma \in H}$ is an infinite monochromatic set for $g$, so in particular, $g^{-1}(\set{i})$ is infinite.

Finding the precise relationship between $\RT^1$ and $\TT^1$ is an ongoing investigation using the framework of reverse mathematics. Here, $\RT^1$ and $\TT^1$ are formalized in the language of second-order arithmetic, and their strengths are calibrated in terms of implications over the weak base theory $\RCA_0$ (or sometimes, even weaker systems). By a well-known result of Hirst~\cite[Theorem 6.4]{Hirst-1987}, $\RT^1$ is equivalent over $\RCA_0$ to the bounding scheme $\mathsf{B}\Sigma^0_2$ (which, by classic results, itself lies strictly in-between the induction schemes $\mathsf{I}\Sigma^0_1$ and $\mathsf{I}\Sigma^0_2$). Corduan, Groszek, and Mileti~\cite[Section 3.3]{CGM-2010} obtained the surprising result that $\TT^1$ lies strictly above $\mathsf{B}\Sigma^0_2$, while Chong, Li, Wei, and Yang~\cite[Corollary 4.2]{CLWY-2020} proved it is strictly below $\mathsf{I}\Sigma^0_2$. Thus, we know that $\TT^1$ is a stronger principle than $\RT^1$, but more questions remain. Chief among these is whether $\TT^1$ is conservative for $\Pi^1_1$ sentences over $\RCA_0 + \RT^1$. In particular, it is possible that the first-order part of $\TT^1$ (i.e., the set of its first-order consequences over $\RCA_0$) is exactly (the set of such consequences of) $\RT^1$.

In this article, we investigate the strength of $\TT^1$, and its relationship to $\RT^1$, using the tools of Weihrauch reducibility and Weihrauch-style computable analysis. This is a different framework from reverse mathematics, but one that is often complementary (see Dorais, et al.,~\cite[Section 1]{DDHMS-2016} for a discussion). In many cases, the results in this approach provide a finer, more detailed view of those in classical reverse mathematics. We obtain a number of results along these lines below. Recently, Dzhafarov, Solomon, and Yokoyama~\cite{DSY-TA} introduced an analogue for Weihrauch reducibility of the first-order part of a problem (defined precisely in the next section). We show that, for the most natural way of formulating $\TT^1$ and $\RT^1$ in the context of Weihrauch reducibility, the first-order part of $\TT^1$ in the sense of~\cite{DSY-TA} is precisely $\RT^1$. Thus, we obtain a positive answer to the analogue of the question above. As shown in~\cite{DSY-TA}, and in follow-up works by Sold\`{a} and Valenti~\cite{SV-2022} and Goh, Pauly, and Valenti~\cite{GPV-2021}, this new notion of ``first-order part'' largely agrees with the one from reverse mathematics. Thus, our results may be viewed as evidence in support of the corresponding answers for the original question, too.

Before moving on, we include one final remark for general interest. By its equivalence with the infinitary pigeonhole principle, $\RT^1$ may be regarded as either a second-order statement (asserting the existence of certain kinds of sets) or as a first-order (arithmetical) statement. Interestingly, the same is not true of $\TT^1$, as we learned from Leszek Ko\l{}odziejczyk (private communication). We include the argument he supplied as it is quite lovely but appears to be missing from the literature. As discussed above, Chong, Li, Wei, and Yang~\cite[Corollary 4.2]{CLWY-2020} exhibited a model $(M,\mathcal{S})$ of $\RCA_0 + \neg \mathsf{I}\Sigma^0_2 + \TT^1$. By results of Corduan, Groszek, and Mileti~\cite[Proposition 3.5]{CGM-2010}, there is an $X \in \mathcal{S}$ such that some $\TT^1$ instance $f$ computable from $X$ (in the sense of $M$) has no solution computable from $X$. Consider the topped model $M[X]$, which has first-order part $M$ and second order part the set of all $Y \subseteq M$ computable from $X$. Then $\RCA_0$ holds in $M[X]$ but $\TT^1$ fails, as witnessed by $f$. But $(M,\mathcal{S})$ and $M[X]$ have the same first-order part (namely, $M$) and thus satisfy the same $\Pi^1_1$ sentences. Any $\Pi^1_1$ statement equivalent over $\RCA_0$ to $\TT^1$ would thus hold in $(M,\mathcal{S})$ and hence in $M[X]$, which cannot be since $\TT^1$ is true in one but not the other. (Below, we obtain the analogue of this result in the setting of Weihrauch reducibility, but using very different means.)

\section{Preliminaries}\label{sec:background}

Our computability-theoretic notation is largely standard, following, e.g., Downey and Hirschfeldt~\cite{DH-2010} and Soare~\cite{Soare-2016}. All objects under discussion are assumed to be represented by elements of $2^{\omega}$. To this end, subsets of $\omega$ are identified with their characteristic functions, and elements of $\omega$ are identified with singleton subsets of $\omega$. We fix a standard computable bijection between $\omega^{<\omega}$ and $\omega$ such that for all $\alpha \in \omega^{<\omega}$, $|\alpha|$ is uniformly computable from $\alpha$. A subset of $\omega^{<\omega}$ is then identified with its image under this bijection. Finite sets will usually be specified by (and identified with) their canonical indices. More complex representations will be defined as needed. 

For subsets $A_0,\ldots,A_{n-1}$ of $\omega$ (or of $\omega^{<\omega}$, via our representation) we write $\seq{A_0,\ldots,A_{n-1}}$ instead of the more common $A_0 \oplus \cdots \oplus A_{n-1}$. And given a Turing functional $\Phi$, we write $\Phi(A_0,\ldots,A_{n-1})$ as shorthand for $\Phi^{\seq{A_0,\ldots,A_{n-1}}}$.

The principal object of interest in the framework of Weihrauch-style computable analysis is the following.

\begin{definition}
	A \emph{instance-solution problem} (or just \emph{problem}) is a partial multifunction $\mathsf{P} : \subseteq X \rightrightarrows Y$, where $X$ and $Y$ are subsets of $\omega^\omega$, such that $\mathsf{P}(x) \neq \emptyset$ for each $x \in \dom(\mathsf{P})$. The elements of $\dom(\mathsf{P})$ are called the \emph{instances of $\mathsf{P}$} (or \emph{$\mathsf{P}$-instances}) and for each $x \in \dom(\mathsf{P})$, the elements of $\mathsf{P}(x)$ are called the \emph{solutions to $x$ in $\mathsf{P}$} (or \emph{$\mathsf{P}$-solutions to $x$}).
\end{definition}

\noindent For many principles, there is a natural way to move between their formalization as a statement of second-order arithmetic and as a problem. For example, we can consider the following problem forms of restrictions of $\RT^1$ and $\TT^1$.

\begin{definition}\label{def:kversions}
	Fix $k \geq 1$.
	\begin{enumerate}
		\item $\RT^1_k$ is the problem whose instances are all colorings $f : \omega \to k$, with the solutions to any such $f$ being all infinite sets $M \subseteq \omega$ monochromatic for $f$.
		\item $\TT^1_k$ is the problem whose instances are all colorings $f : 2^{<\omega} \to k$, with the solutions to any such $f$ being all $H \cong 2^{<\omega}$ monochromatic for $f$.
	\end{enumerate}
\end{definition}

\noindent For $\RT^1$ and $\TT^1$, there is some subtlety. Brattka and Rakotoniaina~\cite[Definition 3.1]{BR-2017} considered two problem forms of $\RT^1$, denoted by $\RT^1_+$ and $\RT^1_{\mathbb{N}}$, which we define next. We define two problem forms of $\TT^1$ in an analogous fashion.

\begin{definition}\label{def:+Nversions}
	\
	\begin{enumerate}
		\item $\RT^1_+$ is the problem whose instances are all pairs $\seq{k,f}$ where $k \geq 1$ and $f$ is a coloring $\omega \to k$, with the solutions to any such pair being all infinite sets $M \subseteq \omega$ monochromatic for $f$.
		\item $\RT^1_{\mathbb{N}}$ is the problem whose instances are all colorings $f : \omega \to k$ for some $k \geq 1$, with the solutions to any such $f$ being all infinite sets $M \subseteq \omega$ monochromatic for $f$.
		\item $\TT^1_+$ is the problem whose instances are all pairs $\seq{k,f}$ where $k \geq 1$ and $f$ is a coloring $2^{<\omega} \to k$, with the solutions to any such pair being all $H \cong 2^{<\omega}$ monochromatic for $f$.
		\item $\TT^1_{\mathbb{N}}$ is the problem whose instances are all colorings $f : 2^{<\omega} \to k$ for some $k \geq 1$, with the solutions to any such $f$ being all $H \cong 2^{<\omega}$ monochromatic for $f$.
	\end{enumerate}
\end{definition}

\noindent The distinction here is whether an instance is just a coloring with bounded range, or whether an upper bound must be explicitly provided along with the coloring. From the point of view of a classical system like $\RCA_0$, there is no difference between these two formulations. But an upper bound cannot be obtained uniformly computably from a coloring, and this is often useful in constructions---i.e., the ability to increase the number of colors finitely many times while building a coloring without declaring an upper bound in advance. (See, e.g., the proof of \Cref{thm:TT1k_not_to_RT1N} below, and see~\cite[Section 3.3]{DM-2022} for a discussion of this and similar issues.) From a purely computability-theoretic perspective, then, the versions $\RT^1_{\mathbb{N}}$ and $\TT^1_{\mathbb{N}}$ are more natural. (For more on the distinction between the $+$ and $\mathbb{N}$ cases, see the discussion in~\cite{PPS-TA}.)

Problems can be compared using the following reducibility notion, originally proposed by Weihrauch~\cite{Weihrauch-1987,Weihrauch-1992}, and independently by Dorais, et al.~\cite{DDHMS-2016} in the guise we use here.

\begin{definition}
	Let $\mathsf{P}$ and $\mathsf{Q}$ be problems.
	\begin{enumerate}
		\item $\mathsf{P}$ is \emph{Weihrauch reducible} to $\mathsf{Q}$, written $\mathsf{P} \ured \mathsf{Q}$, if there exist Turing functionals $\Phi$ and $\Psi$ such that $\Phi(x) \in \dom(\mathsf{Q})$ for every $x \in \dom(\mathsf{P})$, and $\Psi(x,y) \in \mathsf{P}(x)$ for every $y \in \mathsf{Q}(\Phi(x))$. In this case, we also say that $\mathsf{P}$ is Weihrauch reducible to $\mathsf{Q}$ \emph{via $\Phi$ and $\Psi$}.
		\item If $\mathsf{P} \ured \mathsf{Q}$ and $\mathsf{Q} \ured \mathsf{P}$ then $\mathsf{P}$ and $\mathsf{Q}$ are \emph{Weihrauch equivalent}, written $\mathsf{P} \uequiv \mathsf{Q}$.
	\end{enumerate}
\end{definition}

\noindent The argument given in \Cref{sec:intro} for $\RT^1$ being a consequence of $\TT^1$ directly shows that for all $k \geq 1$, $\RT^1_k \ured \TT^1_k$, that $\RT^1_+ \ured \RT^1_{\mathbb{N}} \ured \TT^1_{\mathbb{N}}$, and also that $\RT^1_+ \ured \TT^1_+$. That none of these reductions can be reversed follows from \Cref{1TTnuredRT1,thm:TT1k_not_to_RT1N} below. We will continue to use the initialisms $\RT^1$ and $\TT^1$ when speaking, generally, about any of the problems from \Cref{def:kversions,def:+Nversions}.

We now define the notion of the \emph{first-order part} of a problem, originally introduced by Dzhafarov, Solomon, and Yokoyama~\cite[Definition 2.1]{DSY-TA}. In what follows, Turing functionals are assumed to be represented by their indices within some fixed enumeration of all partial computable oracle functions.

\begin{definition}
	Let $\mathsf{P}$ be a problem. The first-order part of $\mathsf{P}$, denoted $\fopart \mathsf{P}$, is the following problem:
	\begin{itemize}
		\item the instances of $\fopart \mathsf{P}$ are all triples $\seq{A,\Delta,\Gamma}$ where $A \in 2^\omega$ is arbitrary and $\Delta$ and $\Gamma$ are Turing functionals such that $\Delta(A) \in \dom(\mathsf{P})$ and $\Gamma(A,y)(0) \converges$ for all $y \in \mathsf{P}(\Delta(A))$;
		\item the solutions to any $\seq{A,\Delta,\Gamma}$ as above are all values of the form $\Gamma(A,y)(0)$ for some $y \in \mathsf{P}(\Delta(A))$.
	\end{itemize}	
\end{definition}

\noindent Intuitively, $\fopart \mathsf{P}$ considers an instance of $\mathsf{P}$ along with a functional that halts on an initial segment of any $\mathsf{P}$-solution, and accepts the output of that computation as its own solution. (A slightly different but equivalent formulation appears in~\cite[Definition 3.1]{SV-2022}.)

The reason for the name ``first-order'' is that, as shown in~\cite[Theorem 2.2]{DSY-TA}, $\fopart \mathsf{P}$ is Weihrauch equivalent to the maximum under $\ured$ of all $\mathsf{Q} \ured \mathsf{P}$ with co-domain $\omega$. Such a problem is called \emph{first-order}. Thus, $\fopart \mathsf{P}$ is the ``strongest'' first-order problem that lies Weihrauch below $\mathsf{P}$. Using this characterization, we can thus see that for all $k \geq 1$, $\fopart \RT^1_k \uequiv \RT^1_k$, and that $\fopart \RT^1_{+} \uequiv \RT^1_{+}$ and $\fopart \RT^1_{\mathbb{N}} \uequiv \RT^1_{\mathbb{N}}$. This is because in each case, knowing the color of an infinite homogeneous for a given coloring suffices to uniformly compute such a set from the coloring. This is not the case for trees, as we will show below. But we do have that for all $k \geq 1$, $\RT^1_k \ured \fopart \TT^1_k \ured \TT^1_k$; that $\RT^1_{+} \ured \fopart \TT^1_{+} \ured \TT^1_{+}$; and that $\RT^1_{\mathbb{N}} \ured \fopart \TT^1_{\mathbb{N}} \ured \TT^1_{\mathbb{N}}$.

We conclude this section with some definitions specific to our working with elements and subset of $2^{<\omega}$.

\begin{definition}
	Fix $S \subseteq 2^{<\omega}$.
	\begin{enumerate}
		\item The \emph{rank of $\sigma \in S$ in $S$}, denoted $\rank_S(\sigma)$, is $|\set{\tau \in S : \tau \prec \sigma}|$.
		\item For $n \in \omega$, $S^{\rank < n}$ denotes the set $\set{\sigma \in S : \rank_S(\sigma) < n}$.
		\item $S$ is \emph{rooted} if it there is a unique $\rho \in S$, called the \emph{root} of $S$, with $\rank_S(\rho) = 0$.
		\item $\tau \in S$ is a \emph{successor of $\sigma \in S$ in $S$} if $\sigma \prec \tau$ and $\rank_S(\tau) = \rank_S(\sigma)+1$.
		\item $\sigma \in S$ is a \emph{leaf of $S$} if it has no successor in $S$.
		\item $S$ is \emph{symmetric} if all $\sigma,\tau \in S$ with $\rank_S(\sigma) = \rank_S(\tau)$ have the same number of successors in $S$.
		\item The \emph{height} of $S$, denoted $\height(S)$, is $\sup \set{ \rank_S(\sigma) + 1 : \sigma \in S}$. (So $\height(\emptyset) = 0$.)
	\end{enumerate}
\end{definition}

For all oracles $A$, we assume that if $\Phi(A)(n) \converges$ then $\Phi(A)(m) \converges$ for all $m < n$. We also adopt the following more specific conventions about oracles contained in $2^{<\omega}$. If $\Phi$ is a functional and $S \subseteq 2^{<\omega}$ is a finite set such that $\Phi(S)(n) \converges$ for some $n$, then we assume the length of the computation is bounded by the length of the shortest leaf in $S$. This ensures that if $T \subseteq 2^{<\omega}$ is any set (finite or infinite) such that $T \supseteq S$ and every $\sigma \in T \setminus S$ extends a leaf of $S$, then $\Phi(T)(n) \converges = \Phi(S)(n)$. More generally, we have that if $\varphi(X)$ is any $\Sigma^0_1$ formula such that $\varphi(S)$ holds, then so does $\varphi(T)$.

Unless otherwise noted, all objects throughout should be regarded as elements of $\omega^{\omega}$ via standard codings.

\section{The complexity of building monochromatic solutions}\label{sec:dense-or-cone}

An initial point of distinction between the complexities of $\RT^1$ and $\TT^1$ can be seen in the complexity of checking whether a given set \emph{is} a solution to a given coloring, versus whether a given element is \emph{extendible} to a solution. It is easy to see that for $\RT^1$, both questions are $\Pi^0_2$ relative to the set and the coloring. For $\TT^1$, checking whether a given set is a solution is likewise $\Pi^0_2$: indeed, saying that $H$ is monochromatic for a given $\TT^1$ instance $f$ is $\Pi^0_1$ in $H$ and $f$, while saying that $H \cong 2^{<\omega}$ can be expressed as
\begin{eqnarray*}
	H \neq \emptyset \wedge (\forall \sigma \in H)(\exists \tau_0,\tau_1 \in H)[\sigma \prec \tau_0,\tau_1 \wedge \tau_0 \mid \tau_1]\hspace{5em}\\
	\wedge (\forall \sigma \in H)(\forall \tau_0,\tau_1,\tau_2 \in H)[\sigma \prec \tau_0,\tau_1,\tau_2 \to (\exists \rho \in H)(\exists i < 3)[\sigma \prec \rho \prec \tau_i]].
\end{eqnarray*}

\noindent However, in stark contrast, whether or not an element of $2^{<\omega}$ is part of a solution to a given instance of $\TT^1$ is far harder: it is $\Sigma^1_1$-complete relative to $H$ and $f$. We can express this using Weihrauch reducibility using the definition below. As a canonical representative of a ``$\Sigma^1_1$-complete problem'' we take $\mathsf{WF}$, which we recall is the problem whose instances are arbitrary trees $T \subseteq \omega^{<\omega}$, with the solution to any such $T$ being $0$ or $1$ depending as $T$ is ill-founded or well-founded, respectively.

\begin{definition}
	Fix $k \geq 1$. $\TT^1_k$-$\mathsf{Ext}$ is the problem whose instances are pairs $\seq{f,\sigma}$ where $f$ is a coloring $2^{<\omega} \to k$ and $\sigma \in 2^{<\omega}$, with the solutions to any such pair being $1$ or $0$ depending as there is or is not an $H \cong 2^{<\omega}$ monochromatic for $f$ and containing $\sigma$.
\end{definition}

\begin{theorem}
	For all $k \geq 2$, $\TT^1_k$-$\mathsf{Ext} \uequiv \mathsf{WF}$.
\end{theorem}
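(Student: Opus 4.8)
The statement asserts a Weihrauch equivalence $\TT^1_k\text{-}\mathsf{Ext}\uequiv\mathsf{WF}$ for $k\geq 2$, so I would prove two reductions. For the direction $\TT^1_k\text{-}\mathsf{Ext}\ured\mathsf{WF}$, I would first observe (as sketched in the excerpt) that ``$\sigma$ does \emph{not} extend to an $H\cong 2^{<\omega}$ monochromatic for $f$'' is a $\Sigma^1_1$ statement in $f$ and $\sigma$: it says there is no function $X$ coding a monochromatic copy of $2^{<\omega}$ through $\sigma$. Dually, ``$\sigma$ \emph{does} extend'' is $\Pi^1_1$. The cleanest way to package this is to build, uniformly computably from $\seq{f,\sigma}$, a tree $T_{f,\sigma}\subseteq\omega^{<\omega}$ whose infinite paths correspond to (ranked, level-by-level) constructions of an $H\cong 2^{<\omega}$ monochromatic for $f$ with $\sigma\in H$; then $T_{f,\sigma}$ is ill-founded iff such an $H$ exists, so applying $\mathsf{WF}$ to $T_{f,\sigma}$ and swapping the answer ($1\leftrightarrow$ ill-founded becomes the ``yes'' answer) gives the forward and backward functionals. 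The only care needed is to set up $T_{f,\sigma}$ so that a path really does encode enough data to recover a genuine monochromatic copy — one should have nodes at level $n$ specify the finite symmetric subtree $H^{\rank<n}$ together with its coloring value, with child relation given by ``extends and is still monochromatic and still extendible to the next level'', using K\"onig's lemma to pass from an infinite path to a full copy.

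For the harder direction, $\mathsf{WF}\ured\TT^1_k\text{-}\mathsf{Ext}$, I would encode an arbitrary tree $T\subseteq\omega^{<\omega}$ into a coloring $f_T:2^{<\omega}\to k$ and a string $\sigma$ (I expect $\sigma=\emptystring$ will do) such that $T$ is well-founded iff $\sigma$ is \emph{not} extendible to a monochromatic $H\cong 2^{<\omega}$ — again with a final bit-flip on the $\mathsf{WF}$ output. The natural construction embeds $T$ into $2^{<\omega}$ by a standard computable map $\iota$ (e.g.\ replacing the $n$th successor by the string $0^n1$), and colors a node by color $1$ exactly on the image of $\iota$ (and on enough ``connective tissue'' to make the image part of a copy), and by color $0$ elsewhere, arranged so that the only way to find a monochromatic copy of $2^{<\omega}$ through the root is to find a monochromatic copy inside (an embedded image of) $T$, which forces $T$ to have an infinite path. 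The subtlety is that $\TT^1_k$ solutions need not be downward closed or meet-closed, so I must prevent ``cheating'' copies that use color-$1$ nodes scattered off the image of $T$; this is why $k\geq 2$ is needed (with a genuine color to block the complement) and presumably why one designs $f_T$ so that color-$1$ nodes are \emph{exactly} an isomorphic-to-$T$-shaped set sitting above the root, leaving no room for a monochromatic copy unless $T$ itself contains one.

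**Main obstacle.** I expect the real work to be in the $\mathsf{WF}\ured\TT^1_k\text{-}\mathsf{Ext}$ direction: getting a coloring whose monochromatic copies of $2^{<\omega}$ are in a provable bijection (or at least a ``$\exists$ one iff $\exists$ the other'' correspondence) with infinite paths through $T$, robustly against the fact that monochromatic copies can meander through $2^{<\omega}$ without respecting meets or downward closure. The combinatorial heart is choosing the embedding $\iota:\omega^{<\omega}\to 2^{<\omega}$ and the color assignment so that (i) the image of any infinite path through $T$ \emph{does} generate a monochromatic copy of $2^{<\omega}$ — which forces $\iota$ to send each node to something with at least two incomparable ``reachable'' images of successors, plus padding — and (ii) \emph{no} monochromatic copy exists when $T$ is well-founded, which requires a rank argument: by induction on the well-founded rank of nodes of $T$, no monochromatic copy can be based above $\iota(\alpha)$. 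Once this coloring is in hand, the functionals for the reduction are immediate: $\Phi(T)=\seq{f_T,\emptystring}$ and $\Psi(T,b)=1-b$.

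Everything else — checking the $\Pi^0_2$ claims, verifying that the constructed tree $T_{f,\sigma}$ is computable from $\seq{f,\sigma}$, and assembling the two pairs of Turing functionals into a proof of $\uequiv$ — is routine bookkeeping, with the conventions on $\Sigma^0_1$ formulas and finite subsets of $2^{<\omega}$ from the end of \Cref{sec:background} handling the continuity/finite-use issues.
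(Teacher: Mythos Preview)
Your plan for the easy direction $\TT^1_k\text{-}\mathsf{Ext}\ured\mathsf{WF}$ is correct and matches the paper's. (You have a small slip in the opening sentence: ``there \emph{exists} $H$'' is the $\Sigma^1_1$ statement, not its negation, since the matrix is arithmetical --- but your actual tree-of-partial-copies construction is unaffected by this mislabeling.)

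The hard direction has a genuine gap that you flag but do not close. Take $T$ to be a single infinite chain. Then $T$ is ill-founded, yet under your embedding $\iota$ the color-$1$ set $\iota(T)\subseteq 2^{<\omega}$ is also a chain and contains no $H\cong 2^{<\omega}$; your $\TT^1_k\text{-}\mathsf{Ext}$ instance therefore answers $0$, and after the bit-flip you wrongly declare $T$ well-founded. The ``padding'' and ``connective tissue'' you gesture at cannot repair this: a finite fan grafted above each $\iota(\alpha)$ contributes nothing to an \emph{infinite} copy, while an infinite one manufactures a monochromatic copy regardless of whether $T$ has a path. The rank argument you propose for the well-founded case is moot until the ill-founded case actually produces a copy.

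The missing idea is to first convert ``does $T$ have a path?'' into ``does an auxiliary tree have $2^{\aleph_0}$ paths?'' (equivalently, contain a perfect subtree), since it is the latter property that corresponds to the existence of a monochromatic $H\cong 2^{<\omega}$ inside a tree. The paper does this by forming the product $S=T*\omega^{<\omega}=\set{\seq{\alpha,\beta}:\alpha\in T,\ \beta\in\omega^{<\omega},\ |\alpha|=|\beta|}$, so that $[T]\neq\emptyset$ iff $|[S]|=2^{\aleph_0}$, then passes to a binary tree $R\subseteq 2^{<\omega}$ via the standard $\gamma\mapsto 0^{\gamma(0)}10^{\gamma(1)}1\cdots 0^{\gamma(|\gamma|-1)}10^s$ encoding and sets $f(\sigma)=0$ iff $\sigma\in R$. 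Now a color-$0$ monochromatic $H\cong 2^{<\omega}$ containing $\emptystring$ exists iff $|[R]|=2^{\aleph_0}$ iff $[T]\neq\emptyset$, and the functionals are exactly the $\Phi(T)=\seq{f,\emptystring}$ and $\Psi(T,b)=1-b$ you wrote down. With the product trick in hand, the rest of your outline goes through.
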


\begin{proof}
	The reduction from $\TT^1_k$-$\mathsf{Ext}$ to $\mathsf{WF}$ is straightforward. Given an instance $\seq{f,\sigma}$ of the former, checking whether there exists $H \cong 2^{<\omega}$ monochromatic for $f$ and containing $\sigma$ as an initial segment is a uniformly $\Sigma^1_1$ property in $f$ and $\sigma$, and thus can be decided using $\mathsf{WF}$.
	
	In the other direction, let $T \subseteq \omega^{<\omega}$ be a tree. We define a uniformly $T$-computable coloring $f : 2^{<\omega} \to 2$ with $f(\emptystring) = 0$ and such that $T$ is well-founded if and only if there is no $H \cong 2^{<\omega}$ monochromatic for $f$ with color $0$. Thus, $T$ is well-founded if and only if the $\TT^1_2$-$\mathsf{Ext}$-solution to $\seq{f,\set{\emptystring}}$ is $0$. Define $S = T * \omega^{<\omega} = \set{ \seq{\alpha,\beta} : \alpha \in T, \beta \in \omega^{<\omega},|\alpha|=|\beta|}$, viewed as a subtree of $\omega^{<\omega}$ by thinking of $\seq{\alpha,\beta}$ as a prefix of $\seq{\alpha',\beta'}$ if and only if $\alpha \preceq \alpha'$ and $\beta \preceq \beta'$. Observe that $[T] \neq \emptyset$ if and only if $|[S]| = 2^{\aleph_0}$. Let $R$ be the collection of all strings of the form $0^{\gamma(0)}10^{\gamma(1)}1\cdots 0^{\gamma(|\gamma|-1)}1 0^s$ for some $\gamma \in S$ and some $s \in \omega$. Then $R$ is a $T$-computable subtree of $2^{<\omega}$, and we have $\emptystring \in R$. Although $[R]$ is non-empty (since it contains, at the very least, the constantly $0$ function) we have that $|[R]| = 2^{\aleph_0}$ if and only if $|[S]| = 2^{\aleph_0}$. We define $f$ by setting $f(\sigma) = 0$ if $\sigma \in R$ and $f(\sigma) = 1$ if $\sigma \notin R$. Thus $f(\emptystring) = 0$, and there exists $H \cong 2^{<\omega}$ monochromatic for $f$ and containing $\emptystring$ if and only if there exists a $H \cong 2^{<\omega}$ monochromatic for $f$ with color $0$, which in turn holds in if and only if	$|[R]| = 2^{\aleph_0}$, which, by our previous observations, holds if and only if $[T] \neq \emptyset$. This completes the proof.
\end{proof}

The theorem highlights the intrinsic complexity of knowing whether a given instance of $\TT^1$ admits a solution of a particular, fixed color. In the ``dense-or-cone'' proof of $\TT^1$ discussed in \Cref{sec:intro}, deciding the color in which to try to build a solution was handled by asking certain density questions about the coloring. In an attempt to better understand this, we now look at several variants of the ``dense-or-cone'' argument, and investigate how they relate. Although our discussion could be made more general, and apply to $\TT^1$ for arbitrary numbers of colors, we restrict our discussion to $2$-colorings. Our goal is not to obtain a complete classification of these problems, which are not overly interesting in their own, but rather to get a sense of the relationships between different approaches to solving $\TT^1$, as a heuristic for our work moving forward.

\begin{definition} We define the following problems. Each has as its set of instances all $2$-colorings of $2^{<\omega}$. Thus, we specify a problem by the set of solutions to a given such coloring $f : 2^{<\omega} \to 2$.
	\begin{itemize}
		\item $\V0$: The solutions to $f$ are all pairs $\seq{i,\sigma} \in 2 \times 2^{<\omega}$ such that $\set{\tau : f(\tau) = i}$ is dense above $\sigma$.
		\item $\V1$: The solutions to $f$ are all pairs $\seq{i,\sigma} \in 2 \times 2^{<\omega}$ such that either $\set{\tau : f(\tau) = 0}$ and $\set{\tau : f(\tau) = 1}$ are both dense, or $f(\tau) = i$ for all $\tau \succeq \sigma$.
		\item $\V2$: The solutions to $f$ are all pairs $\seq{i,\sigma}$ such that if $i = 0$ then $\set{\tau : f(\tau) = 0}$ is dense, and if $i = 1$ then $\set{\tau : f(\tau) = 1}$ is dense above $\sigma$.
		\item $\V3$: The solutions to $f$ are all pairs $\seq{i,\sigma}$ such that if $i = 0$ then $\set{\tau : f(\tau) = 0}$ is dense, and if $i = 1$ then all $\tau \succeq \sigma$ with $f(\tau) = 0$ are comparable.
		\item $\V4$: The solutions to $f$ are all pairs $\seq{i,\sigma}$ such that if $i = 0$ then $\set{\tau : f(\tau) = 0}$ is dense, and if $i = 1$ then $f(\tau) = 1$ for all $\tau \succeq \sigma$.
	\end{itemize}
\end{definition}

It is clear from the definitions that $\TT^1_2 \ured \V0 \ured \V1$ and that $\V2 \ured \V3 \ured \V4$. A bit less straightforward is that also $\V1 \ured \V2$, which follows from \Cref{V1_equiv,V2_equiv} below. The purpose of this section is to examine the remaining reductions between these problems. In fact, we prove the following theorem.

\begin{theorem}\label{thm_Vstuff_MAIN}
	$\TT^1_2 \Ured \V0 \uequiv \V1 \Ured \V2 \uequiv \V3 \uequiv \V4$.
\end{theorem}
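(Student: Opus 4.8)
The plan is to handle the three groups of claims separately: the reductions going ``down the chain'', the two collapses $\V0 \uequiv \V1$ and $\V2 \uequiv \V3 \uequiv \V4$, and the two strict separations $\TT^1_2 \Ured \V0$ and $\V1 \Ured \V2$. The reductions $\TT^1_2 \ured \V0 \ured \V1$, $\V2 \ured \V3 \ured \V4$, and $\V1 \ured \V2$ have already been recorded, together with the observation behind $\V2 \ured \V3$: if the color-$0$ nodes above $\sigma$ are pairwise comparable then they lie on a single branch, so color $1$ is dense above $\sigma$. For the remaining ``easy'' collapse, $\V4 \ured \V3$, I would send $f$ to the coloring $g$ with $g(\emptystring) = 0$ and $g(\rho 0) = g(\rho 1) = f(\rho)$ for all $\rho \in 2^{<\omega}$ (so $g$ doubles each color-$0$ node of $f$ into an incomparable pair), run $\V3$ on $g$, and translate: from an answer $\seq{0,\sigma}$, color $0$ of $g$ is dense so color $0$ of $f$ is dense and I return $\seq{0,\emptystring}$; from an answer $\seq{1,\sigma}$, the color-$0$ nodes of $g$ above $\sigma$ are pairwise comparable, but any $\rho \succ \sigma$ with $f(\rho) = 0$ would produce the incomparable color-$0$ nodes $\rho 0, \rho 1$ of $g$ above $\sigma$, so $f \equiv 1$ above $\sigma 0$ and I return $\seq{1, \sigma 0}$.

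The genuinely new positive reductions are $\V3 \ured \V2$ and $\V1 \ured \V0$, for which the identity reduction fails (a $\V2$-answer only says color $1$ is dense above $\sigma$, which is compatible with color $0$ being dense, and full of antichains, above $\sigma$). Here I would build an auxiliary coloring $g$, computable from $f$, designed so that \emph{every} answer that $\V2$ (resp.\ $\V0$) can return on $g$ certifies a valid $\V3$- (resp.\ $\V1$-) solution to $f$. For $\V3 \ured \V2$ the two requirements to arrange are: (i) whenever the color-$0$ nodes of $f$ have an antichain above $\sigma$, $g$ has a color-$0$ cone above some extension of $\sigma$ --- so the answer ``color $1$ dense above $\sigma$'' can only occur when the color-$0$ nodes of $f$ above $\sigma$ are pairwise comparable; and (ii) whenever $f$ has a color-$1$ cone, color $0$ of $g$ is not dense everywhere --- so the answer ``color $0$ dense everywhere'' can only occur when color $0$ of $f$ is dense everywhere. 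Requirement (i) is easily met by setting $g(\tau) = 0$ exactly when a short incomparable pair of color-$0$ $f$-nodes is witnessed below $\tau$; the delicate point is (ii), because an antichain of color-$0$ $f$-nodes with meet lying below a color-$1$ cone of $f$ threatens to make $g$ color $0$ above that cone too, and I expect this must be defused by localizing the witnessing (demanding the witnessing antichain to have meet of length close to $|\tau|$) or by a recursive ``both colors still appear here'' construction. For $\V1 \ured \V0$ one exploits that if both colors of $f$ are dense everywhere then \emph{every} pair is a $\V1$-solution, so only the ``$f$ has a monochromatic cone'' case must be met; a natural start is to place a copy of $f$ in the $\seq{0}$-cone and a copy of the complement $\bar f$ in the $\seq{1}$-cone, so that ``color $0$ of $g$ dense above $\emptystring$'' holds exactly when both colors of $f$ are dense everywhere, and then to iterate the construction inside the two subcones to deal with answers $\seq{i,\sigma}$ for $\sigma \ne \emptystring$.

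For the separations I would argue by contradiction, assuming a reduction via Turing functionals $\Phi$ and $\Psi$. For $\V0 \nured \TT^1_2$ (which gives $\TT^1_2 \Ured \V0$), I would build a $\V0$-instance $f$ in finite stages while in tandem tracking $g = \Phi(f)$ and extending a set $H$, committing a node to $H$ only once $g$ has converged on it, so that $H$ stays monochromatic for $g$ and grows toward a copy of $2^{<\omega}$; as soon as $\Psi(f, H)$ converges --- which it must, having inspected only finitely much of $f$ and $H$ --- to some $\seq{i, \sigma}$, I extend $f$ to put a color-$(1-i)$ cone above a so-far-unexamined extension of $\sigma$, so that color $i$ of $f$ is not dense above $\sigma$ and $\seq{i, \sigma}$ is not a $\V0(f)$-solution. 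The argument for $\V2 \nured \V1$ is of the same shape, with $g = \Phi(f)$ a $\V1$-instance: I feed $\Psi$ the least informative $\V1$-answer available (``both colors of $g$ dense everywhere'' whenever that can be forced) and, once $\Psi$ commits to a $\V2$-answer $\seq{i, \sigma}$, extend $f$ to falsify it (a color-$0$ cone above an extension of $\sigma$ if $i = 1$, a color-$1$ cone somewhere if $i = 0$) while preserving the validity of the answer fed to $\Psi$.

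The hardest step will be $\TT^1_2 \Ured \V0$, and more precisely the requirement that the diagonalization coexist with a \emph{valid} $\TT^1_2$-solution to $g = \Phi(f)$: the extension of $f$ that kills $\seq{i, \sigma}$ also alters $g$, and I must still be able to finish $H$ into a full monochromatic copy of the \emph{final} coloring, i.e.\ the already-committed finite part of $H$ must remain extendible. Unlike for $\V1$, the solutions to a $\TT^1_2$-instance are intrinsically complicated (recall $\TT^1_2$-$\mathsf{Ext} \uequiv \mathsf{WF}$), so there is no computable check for such extendibility; I expect the fix is to keep $H$ confined to a region of $g$-space that the diagonalization moves --- which live in the cone above $\sigma$ --- cannot touch, with the case $\sigma = \emptystring$ needing the most care. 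This is presumably where the paper's new combinatorial machinery for constructing solutions to instances of $\TT^1$ is first required.
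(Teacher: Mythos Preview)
Your route differs substantially from the paper's and leaves real gaps. The paper does not build direct reductions among the $\V i$ or direct diagonalizations for the separations; instead it \emph{classifies}: \Cref{V1_equiv} proves $\V0 \uequiv \V1 \uequiv \mathsf{TC}_{\mathbb{N}}$ and \Cref{V2_equiv} proves $\V2 \uequiv \V3 \uequiv \V4 \uequiv \mathsf{sTC}_{\mathbb{N}}$ (the total and strong total continuations of closed choice on $\mathbb{N}$). The collapses you labor over---$\V3 \ured \V2$ and $\V1 \ured \V0$, where you explicitly acknowledge the constructions are unfinished (``I expect this must be defused by\ldots'', ``a natural start is\ldots'')---then fall out by transitivity through the common target, and the separation $\V2 \nured \V1$ becomes the known fact $\mathsf{sTC}_{\mathbb{N}} \nured \mathsf{TC}_{\mathbb{N}}$, witnessed by $\mathsf{isFinite}$ via a result of Neumann and Pauly. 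Your diagonalization sketch for $\V2 \nured \V1$ may be completable, but the paper simply sidesteps it.

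For $\V0 \nured \TT^1_2$ you correctly sense that the rake machinery is ultimately needed, but not via the direct diagonalization you outline---the obstacle you identify (keeping a committed finite fragment $H$ extendible to a genuine $\TT^1_2$-solution while diagonalizing against $\Psi$) is precisely what the paper avoids. The paper instead observes that $\V0$ is a \emph{first-order} problem (its solutions are coded natural numbers), so $\V0 \ured \TT^1_2$ together with the already-established $\TT^1_2 \ured \V0$ would make $\TT^1_2$ Weihrauch equivalent to a first-order problem. \Cref{TT12_not_FO}, proved later using the rake machinery (via $\fopart \TT^1_{\mathbb{N}} \ured \RT^1_{\mathbb{N}}$) together with \Cref{thm:TT1k_not_to_RT1N}, rules this out. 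Thus the ``hardest step'' in your plan is handled by a one-line appeal to a structural corollary rather than by tracking solutions inside a bespoke diagonalization.
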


\noindent More precisely, we classify $\V0$ and $\V1$, and $\V2, \V3$, and $\V4$, in terms of previously studied problems from the computable analysis literature. We begin by recalling their definitions. Throughout the following, we think of an arbitrary $\mathbf{\Pi}^0_1$ subset $A$ of $\omega$ as being represented by an enumeration of its complement, which we take to be a total function $e : \omega \to \omega$ such that $\overline{A} = \set{x : x+1 \in \ran(e)}$. (Thus, $\ran(e) = \set{0}$ if and only if $A = \omega$.)

\begin{definition}
	We define the following problems.
	\begin{enumerate}
		\item \emph{Closed choice on $\mathbb{N}$} ($\mathsf{C}_{\mathbb{N}}$) is the problem whose instances are all non-empty $\mathbf{\Pi}^0_1$ subsets $A$ of $\omega$, with the solutions to any such $A$ being all $x \in A$.
		\item The \emph{total continuation} of $\mathsf{C}_{\mathbb{N}}$ ($\mathsf{TC}_{\mathbb{N}}$) is the problem whose instances are all $\mathbf{\Pi}^0_1$ subsets $A$ of $\omega$, with the solutions to any such $A$ being all $x \in \omega$ if $A = \emptyset$, and all $x \in A$ otherwise.
		\item The \emph{strong total continuation} of $\mathsf{C}_{\mathbb{N}}$ ($\mathsf{sTC}_{\mathbb{N}}$) is the problem whose instances are all $\mathbf{\Pi}^0_1$ subsets $A$ of $\omega$, with the solutions to any such $A$ being $-1$ if $A = \emptyset$, and all $x \in A$ otherwise.
	\end{enumerate}
\end{definition}

\noindent A full account of closed choice problems is given by Brattka, Gherardi, and Pauly~\cite[Section 7]{BGP-2021}. The total continuation of closed choice was first studied by Neumann and Pauly~\cite{NP-2018}. The strong total continuation, as a general operator on problems, was introduced more recently, by Marcone and Valenti~\cite[Definition 4.17]{MV-2021}. The difference between $\mathsf{C}_{\mathbb{N}}$ and $\mathsf{TC}_{\mathbb{N}}$ is that the latter problem is also defined on the empty set, while the former is not. The difference between $\mathsf{TC}_{\mathbb{N}}$ and $\mathsf{sTC}_{\mathbb{N}}$ is that the latter must indicate, as part of its solution, whether or not the instance it is solving is the empty set, while the former need not do so.

\begin{proposition}\label{V1_equiv}
	$\V0 \uequiv \V1 \uequiv \mathsf{TC}_{\mathbb{N}}$.
\end{proposition}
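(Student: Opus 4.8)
The plan is to prove $\V1\ured\mathsf{TC}_{\mathbb{N}}$ and $\mathsf{TC}_{\mathbb{N}}\ured\V0$. Together with the reduction $\V0\ured\V1$ already noted, these close the cycle $\V1\ured\mathsf{TC}_{\mathbb{N}}\ured\V0\ured\V1$, so all three problems are Weihrauch equivalent.

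For $\V1\ured\mathsf{TC}_{\mathbb{N}}$, given a $2$-coloring $f$ of $2^{<\omega}$, let $A_f=\set{\sigma\in 2^{<\omega}:(\forall\tau\succeq\sigma)\,f(\tau)=f(\sigma)}$ be the set of strings above which $f$ is constant. This set is uniformly $\Pi^0_1$ in $f$, hence an instance of $\mathsf{TC}_{\mathbb{N}}$ uniformly computable from $f$. If $A_f\ne\emptyset$ and a $\sigma\in A_f$ is returned, then $\seq{f(\sigma),\sigma}$ is a $\V1$-solution to $f$. If $A_f=\emptyset$, then there is no constant-$0$ cone and no constant-$1$ cone, so both $\set{\tau:f(\tau)=0}$ and $\set{\tau:f(\tau)=1}$ are dense, and therefore \emph{every} pair $\seq{i,\rho}\in 2\times 2^{<\omega}$ is a $\V1$-solution to $f$. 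Thus the backward functional may simply output $\seq{f(\sigma),\sigma}$, where $\sigma$ is the string coded by its second input if that string lies in $2^{<\omega}$, and $\seq{0,\emptystring}$ otherwise: in the first case the string lies in $A_f$ if $A_f\ne\emptyset$, and every pair is a solution if $A_f=\emptyset$.

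The work is in $\mathsf{TC}_{\mathbb{N}}\ured\V0$. Given a coenumeration of $\overline{A}$ with stage-$s$ approximation $F_s$, I will build a self-similar $2$-coloring $f_A$ of $2^{<\omega}$ so that, when $A\ne\emptyset$, every $\V0$-solution to $f_A$ uniformly decodes to an element of $A$. View the cone above $0^{x}1$ as the $x$-th column. So long as $x$ has not entered the approximation, color this column by the \emph{split rule}: constantly $0$ on the subcone above $0^{x}10$ and constantly $1$ on the subcone above $0^{x}11$. If $x$ enters $F_s$ at the least stage $s_{x}$, then from level $\max(x+1,s_{x})$ downward replace the column by a fresh scaled copy of $f_A$ (built from the same coenumeration). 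Color the leftmost branch $\set{0^{j}:j\in\mathbb{N}}$ constantly $0$. The key points to verify are: a subcone is constant-$i$ exactly when it is the $i$-side of a column whose index never enters $\overline{A}$, i.e.\ an element of $A$; hence as soon as $A\ne\emptyset$ every (sub)copy of $f_A$ contains both a constant-$0$ cone and a constant-$1$ cone, witnessed by the column $0^{\min A}1$. Since $\seq{i,\sigma}$ is a $\V0$-solution exactly when there is no $\tau\succeq\sigma$ above which $f_A$ is constantly $1-i$, it follows (for $A\ne\emptyset$) that a $\V0$-solution $\seq{i,\sigma}$ must have $\sigma$ inside a constant-$i$ cone of some subcopy — and such a $\sigma$ is obtained by passing through finitely many columns whose indices have provably entered $\overline{A}$ and then entering the $i$-side of one further column, whose index, by validity of the solution, must lie in $A$. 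Accordingly the backward functional peels $\sigma$: it reads the position $j$ of the first $1$; if $j\in F_{|\sigma|}$ it deletes the initial segment of length $\max(j+1,s_{j})$ and repeats on the remainder; if $j\notin F_{|\sigma|}$ it halts and outputs $j$. Each step strictly shortens the string, so the procedure halts, and the output lies in $A$ whenever $\seq{i,\sigma}$ is a genuine $\V0$-solution.

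The main obstacle is the analysis of the $\V0$-solutions of $f_A$: ruling out spurious solutions $\seq{1,\sigma}$ with $\sigma$ in a dead column but not decodable to an element of $A$, and dually for $\seq{0,\sigma}$. This is exactly why a dead column is not flooded with a single color (which would leave an undecodable constant-color cone pointing to an index not in $A$) but is replanted with a full copy of $f_A$, so that, by induction on nesting depth, every $\V0$-solution inside it is again decodable; meanwhile the split live columns are what guarantee that no constant-color cone, hence no $\V0$-solution above $\emptystring$, survives when $A\ne\emptyset$. Once this structural description is in place, the cases $A=\emptyset$ and $A=\mathbb{N}$ are immediate, since there $\mathsf{TC}_{\mathbb{N}}$ accepts any answer and only totality of the functionals is required.
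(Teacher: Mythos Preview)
Your reduction $\V1\ured\mathsf{TC}_{\mathbb{N}}$ is essentially the paper's: they encode the pair $\seq{i,\sigma}$ directly into the $\mathbf{\Pi}^0_1$ set rather than recovering $i=f(\sigma)$ afterward, but the content is the same.

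Your construction for $\mathsf{TC}_{\mathbb{N}}\ured\V0$ is genuinely different and considerably more elaborate than the paper's. The paper uses a flat, non-recursive coloring: with $\ell(s)$ the least element not yet enumerated out of $A$ by stage $s$, they set $f(\sigma i)=i$ whenever $\ell(|\sigma|)\ne\ell(|\sigma|+1)$ and $f(\sigma i)=f(\sigma)$ otherwise; the backward functional simply returns $\ell(|\sigma|)$. The verification is then two lines: if $s$ is least with $\ell$ stable from $s$ on, then every $\tau$ of length $s$ has $f(\tau)=\tau(s-1)$ and $f$ is constant above $\tau$, so any $\sigma$ with $|\sigma|<s$ has both a constant-$0$ and a constant-$1$ cone above it and cannot be part of a $\V0$-solution. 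Your self-similar column construction and peeling decoder also work, and the structural claim you rely on---that when $A\ne\emptyset$ every string not already inside a constant-$i$ subcone of some nested copy has both a constant-$0$ and a constant-$1$ cone above it---is correct. What your approach buys is a very explicit bijection between $\V0$-solutions and (paths into) elements of $A$; what it costs is the recursive bookkeeping.

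A few points are left underspecified and should be pinned down: the color of $0^x1$ itself; whether ``a fresh scaled copy of $f_A$'' means one copy rooted at each string of length $\max(x+1,s_x)$ extending $0^x1$ (this is what you need for the peeling offsets to line up); and what the peeling procedure outputs when the current remainder contains no $1$ (you need a default value to keep the backward functional total---you correctly argue this case cannot arise from a genuine $\V0$-solution when $A\ne\emptyset$, but the reduction must still be defined on it). These are all easy patches; the paper's argument simply avoids the need for them.
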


\begin{proof}
	As already noted, $\V0 \ured \V1$. It thus remains to show that $\V1 \ured \mathsf{TC}_{\mathbb{N}} \ured \V0$. To see that $\V1 \ured \mathsf{TC}_{\mathbb{N}}$, fix any instance $f : 2^{<\omega} \to 2$ of $\V1$. Consider the $\Pi^{0,f}_1$ set $A = \set{\seq{i,\sigma} : (\forall \tau \succeq \sigma)[f(\tau) = i]}$, regarded as an instance of $\mathsf{TC}_{\mathbb{N}}$
	%(under a suitable effective coding)
	uniformly computable from $f$. Clearly, $A \neq \emptyset$ if and only if $\set{\tau : f(\tau) = 0}$ and $\set{\tau : f(\tau) = 1}$ are not both dense. Thus, any $\mathsf{TC}_{\mathbb{N}}$-solution to $A$ is a $\V1$-solution to $f$.
	%If $\set{\tau : f(\tau) = 0}$ and $\set{\tau : f(\tau) = 1}$ are not both dense, then the least pair $\seq{i_0,\sigma_0}$ such that $f(\tau) = i$ for all $\tau \succeq \sigma$ is uniformly computable from $f'$. Hence, we can fix a uniformly $f$-computable function $\ell : \omega \to 2 \times 2^{<\omega}$ such that, if $\set{\tau : f(\tau) = 0}$ and $\set{\tau : f(\tau) = 1}$ are not both dense, then $\lim_s \ell(s)$ exists and equals $\seq{i_0,\sigma_0}$. Define an instance $e : \omega \to \omega$ of $\mathsf{TC}_{\mathbb{N}}$ as follows. Given $s \in \omega$, compute $\ell(s) = \seq{i,\sigma}$, choose the least $\sigma' \succeq \sigma$ such that $\seq{i,\sigma} \notin \ran(e \res s)$, and let $e(s)$ be the least element of $\omega$ different from $\seq{i,\sigma'}$. Now, let $\seq{i,\sigma}$ be any $\mathsf{TC}_{\mathbb{N}}$-solution to $e$. If $\set{\tau : f(\tau) = 0}$ and $\set{\tau : f(\tau) = 1}$ are not both dense then $\ran(e) = \omega \setminus \set{\seq{i_0,\sigma'}}$ for some $\sigma' \succeq \sigma_0$, so $\seq{i,\sigma} = \seq{i_0,\sigma'}$. In particular, $f(\tau) = i = i_0$ for all $\tau \succeq \sigma$ in this case. Thus, $\seq{i,\sigma}$ is a $\V1$-solution to $f$.
	
	To conclude the proof, we show that $\mathsf{TC}_{\mathbb{N}} \ured \V0$. Fix an instance $A$ of $\mathsf{TC}_{\mathbb{N}}$, specified by a co-enumeration $e : \omega \to \omega$. From $e$ we can uniformly compute the function $\ell : \omega \to \omega$ such that $\ell(s) = (\mu x)[x+1 \notin e \res s]$. Clearly, $A \neq \emptyset$ if and only if $\lim_s \ell(s)$ exists, in which case $\lim_s \ell(s) \in A$. We now define a uniformly $e$-computable coloring $f : 2^{<\omega} \to 2$, as follows. Let $f(\emptystring) = 0$, and for each $s \in \omega$, each string $\sigma$ of length $s$, and each $i < 2$ let
	\[
		f(\sigma i\bigskip) =
		\begin{cases}
			i & \text{if } \ell(s) \neq \ell(s+1),\\
			f(\sigma) & \text{otherwise}.
		\end{cases}
	\]
	Now, let $\seq{i,\sigma}$ be any $\V0$-solution to $f$. We claim that $\ell(|\sigma|)$ is a $\mathsf{TC}_{\mathbb{N}}$-solution to $A$. Indeed, suppose $A \neq \emptyset$ and fix the least $s$ such that $\ell(s) = \ell(t)$ for all $t \geq s$. If $|\sigma| < s$, then it follows from the definition of $f$ that neither $\set{\tau : f(\tau) = 0}$ nor $\set{\tau : f(\tau) = 1}$ is dense above $\sigma$, even though we assumed $\set{\tau : f(\tau) = i}$ is. Thus, $|\sigma| \geq s$, which proves the claim.
\end{proof}

\begin{proposition}\label{V2_equiv}
	$\V2 \uequiv \V3 \uequiv \V4 \uequiv \mathsf{sTC}_{\mathbb{N}}$.
\end{proposition}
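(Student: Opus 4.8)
The plan is to prove all four problems equivalent by closing the cycle of Weihrauch reductions $\V2 \ured \V3 \ured \V4 \ured \mathsf{sTC}_{\mathbb{N}} \ured \V2$. The first two reductions were already observed above, so it remains to establish $\V4 \ured \mathsf{sTC}_{\mathbb{N}}$ and $\mathsf{sTC}_{\mathbb{N}} \ured \V2$.

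For $\V4 \ured \mathsf{sTC}_{\mathbb{N}}$, I would send an instance $f : 2^{<\omega} \to 2$ to the set $A_f = \set{\sigma \in 2^{<\omega} : (\forall \tau \succeq \sigma)[f(\tau) = 1]}$, regarded (via the fixed coding of $2^{<\omega}$ into $\omega$) as a $\Pi^{0,f}_1$ subset of $\omega$ uniformly computable from $f$. The point is that $A_f = \emptyset$ exactly when $\set{\tau : f(\tau) = 0}$ is dense. So, given the $\mathsf{sTC}_{\mathbb{N}}$-answer to $A_f$: if it is $-1$, then $\set{f = 0}$ is dense and $\seq{0, \emptystring}$ is a valid $\V4$-solution; and if it is some $\sigma \in A_f$, then $f$ is constantly $1$ above $\sigma$ and $\seq{1, \sigma}$ is a valid $\V4$-solution. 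These alternatives are exhaustive, so the reduction goes through.

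For $\mathsf{sTC}_{\mathbb{N}} \ured \V2$, I would reuse the $e$-computable function $\ell$ from the proof of \Cref{V1_equiv} (where $e$ is the given co-enumeration of the complement of $A$): it is non-decreasing, with $\lim_s \ell(s) = \min A$ when $A \neq \emptyset$ and $\ell(s) \to \infty$ when $A = \emptyset$. Let $t(n)$ be the last stage $s \le n$ at which $\ell$ increases (with $t(n) = 0$ if $\ell$ is constant on $[0,n]$), and define the $e$-computable coloring $f : 2^{<\omega} \to 2$ by declaring $f(\sigma) = 1$ if and only if $\sigma \succeq 0^{t(|\sigma|)}$; thus the color-$1$ region is the cone above $0^{t(|\sigma|)}$, a base that descends the all-zeros path exactly at the stages where $\ell$ moves. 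Two facts drive the reduction. First, if $A \neq \emptyset$ and $\ell$ reaches $\min A$ at stage $s^*$, then $t(n) = s^*$ for all $n \ge s^*$, so $f$ is constantly $1$ on the whole cone above $0^{s^*}$; hence $\set{f = 0}$ is not dense, and (since any node incomparable to $0^{s^*}$ has all sufficiently long extensions colored $0$) $\set{f = 1}$ is dense above a node $\sigma$ only if $\sigma \succeq 0^{s^*}$. Second, if $A = \emptyset$ then $t(n) \to \infty$, and then $\set{f = 1}$ is \emph{nowhere} dense --- given any $\sigma$, appending a $1$ and then enough $0$s yields a node all of whose sufficiently long extensions are colored $0$ --- while $\set{f = 0}$ is dense. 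The solution functional then reads a $\V2$-solution $\seq{i, \sigma}$ to $f$ as follows: if $i = 0$ it outputs $-1$, which is correct because $\set{f = 0}$ being dense forces $A = \emptyset$ by the first fact; and if $i = 1$ it outputs $\ell(|\sigma|)$, which is correct because $\set{f = 1}$ being dense above $\sigma$ forces $A \neq \emptyset$ by the second fact, whence $\sigma \succeq 0^{s^*}$, so $|\sigma| \ge s^*$ and $\ell(|\sigma|) = \min A \in A$.

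The main obstacle is this last construction, and specifically the requirement that $\set{f = 1}$ be nowhere dense whenever $A = \emptyset$: otherwise a $\seq{1, \sigma}$ answer would remain available in the empty case, and the solution functional, which must then output $-1$ rather than a natural number, would have no way to recognize the situation. The ``sliding cone'' above $0^{t(|\sigma|)}$ is designed exactly to rule this out --- any deviation from the all-zeros path is eventually overtaken by the descending base, so a node off that path can lie in a color-$1$ cone only if $\ell$ (hence $t$) stabilizes. The remaining verifications, namely the $e$-computability of $f$ and the fact that the two cases of the solution map are exhaustive (which uses only that $\V2$ is total and that, by the above, at most one type of solution is available for each $A$), are routine.
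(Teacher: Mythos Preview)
Your proof is correct and follows essentially the same strategy as the paper: close the cycle via $\V4 \ured \mathsf{sTC}_{\mathbb{N}} \ured \V2$, using the set $A_f = \set{\sigma : (\forall \tau \succeq \sigma)\,f(\tau)=1}$ for the first reduction and the function $\ell$ from \Cref{V1_equiv} for the second. The only difference is cosmetic: the paper's coloring sets $f(0^s)=0$ along the all-zeros path and declares $f(\sigma)=1$ for $\sigma \succeq 0^s1$ precisely when $\ell$ is constant on $[s,|\sigma|]$, whereas your ``sliding cone'' above $0^{t(|\sigma|)}$ colors the all-zeros path $1$; both encodings make the color-$1$ region stabilize exactly when $\ell$ does, and both recover an element of $A$ as $\ell(|\sigma|)$. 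One tiny point: your parenthetical justification that $\set{f=1}$ is dense above $\sigma$ only if $\sigma \succeq 0^{s^*}$ literally treats only $\sigma$ incomparable to $0^{s^*}$, but the case $\sigma \prec 0^{s^*}$ follows immediately since $\sigma 1$ is then incomparable to $0^{s^*}$ and density is inherited upward.
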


\begin{proof}
	As already mentioned, $\V2 \ured \V3 \ured \V4$, so we have only to show that $\V4 \ured \mathsf{sTC}_{\mathbb{N}} \ured \V2$. For the first reduction, fix $f : 2^{<\omega} \to 2$ and define $A = \set{\sigma : (\forall \tau \succeq \sigma)[f(\tau) = 1]}$. We can regard $A$ as an instance of $\mathsf{sTC}_{\mathbb{N}}$, uniformly computable from $f$. Then $A \neq \emptyset$ if and only if $\set{\tau : f(\tau) = 0}$ is not dense. Given $-1$ as an $\mathsf{sTC}_{\mathbb{N}}$-solution to $A$, we can then take $\seq{0,\emptystring}$ as a $\V4$-solution to $f$. And given some $\mathsf{sTC}_{\mathbb{N}}$-solution to $A$ different from $-1$, coding a string $\sigma \in 2^{<\omega}$, we can take $\seq{1,\sigma}$ as a $\V4$-solution to~$f$.
	
	Now, to show that $\mathsf{sTC}_{\mathbb{N}} \ured \V2$, fix an instance $A$ of $\mathsf{sTC}_{\mathbb{N}}$, specified by a co-enumeration $e : \omega \to \omega$. Let $\ell$ be the function defined in the proof of \Cref{V1_equiv}. Define $f : 2^{<\omega} \to 2$ as follows: for every $s$, let $f(0^s) = 0$; for every $s$ and every $\sigma \succeq 0^s1$, let
	\[
		f(\sigma) =
		\begin{cases}
			0 & \text{if } (\exists t)[s \leq t < |\sigma| \wedge \ell(t) \neq \ell(t+1)],\\
			1 & \text{otherwise}.
		\end{cases}
	\]
	Note that $A \neq \emptyset$ if and only if $\lim_s \ell(s)$ exists (and belongs to $A$), if and only if there is a $\sigma$ such that $f(\tau) = 1$ for all $\tau \succeq \sigma$. So suppose $\seq{i,\sigma}$ is a $\V2$-solution to $f$. If $i = 0$, then $\set{\tau : f(\tau) = 0}$ is dense, hence by the previous observation we must have $A = \emptyset$. If $i = 1$, then we claim that $\ell(|\sigma|) \in A$. Suppose not, so that for some $t \geq |\sigma|$ we have $\ell(t) \neq \ell(t+1)$. Fix $\sigma' \succeq \sigma1$ with $|\sigma'| > t$. Then $\sigma' \succeq 0^s1$ for some $s \leq t$, hence $s \leq t < |\sigma'|$. By definition of $f$, we have that $f(\tau) = 0$ for all $\tau \succeq \sigma'$. But since $\seq{1,\sigma}$ is a $\V2$-solution to $f$, we also have that $\set{\tau : f(\tau) = 1}$ is dense above $\sigma$, a contradiction. Thus, from any $\V2$-solution to $f$ we can uniformly computably determine whether or not $A$ is empty, and if it not, determine an element of $A$, which completes the proof.
\end{proof}

We are now ready to prove the main theorem of this section.

\begin{proof}[Proof of \Cref{thm_Vstuff_MAIN}]
	We have only to justify that $\V0 \nured \TT^1_2$ and that $\V2 \nured \V1$. The former is a direct consequence of \Cref{TT12_not_FO}, to be proved later, which establishes the stronger fact that $\TT^1_2$ is not Weihrauch equivalent to any first-order problem. The latter follows from \Cref{V1_equiv,V2_equiv}, and the fact that $\mathsf{sTC}_{\mathbb{N}} \nured \mathsf{TC}_{\mathbb{N}}$. To justify this last fact, consider for example the problem $\mathsf{isFinite}$, whose instances are characteristic functions of subsets of $\omega$ having, as their unique solution, either $1$ or $0$ depending as this subset is finite or infinite. Clearly, $\mathsf{isFinite} \ured \mathsf{sTC}_{\mathbb{N}}$ since the set of upper bounds of a given subset of $\omega$ is $\mathbf{\Pi}^0_1$. But $\mathsf{isFinite} \nured \mathsf{TC}_{\mathbb{N}}$ by a result of Neumann and Pauly~\cite[Proposition 24]{NP-2018}.
\end{proof}

\section{Bounded number of colors}\label{sec:boundedcols}

In this section, we analyze the strength of $\TT^1_k$ for fixed $k \in \omega$, as well as of $\TT^1_+$, which is $\TT^1$ restricted to colorings for which an upper bound on the number of colors is known explicitly.

As noted in \Cref{sec:intro,sec:background}, $\RT^1_k \ured \TT^1_k$. It turns out that this is strict, in a strong sense that we now explain. First, we can show that $\RT^1_k$ cannot be obtained from $\TT^1$ using fewer than $k$ many colors. This strengthens the result that $\RT^1_k \nured \RT^1_j$ if $k > j$, due independently to Brattka and Rakotoniaina~\cite[Theorem 4.22]{BR-2017}, Dorais, et al.~\cite[Theorem 3.1]{DDHMS-2016}, and Hirschfeldt and Jockusch~\cite[Theorem 3.4]{HJ-2016}. We thank the anonymous referee for pointing out that this result also follows from our \Cref{thm:TT1k_not_D2k} below and the fact that $\RT^1_k \nured \D^2_j$ for all $k > j$ (see \cite{HJ-2016}, Theorem 2.10). We include a direct proof here for completeness.

\begin{theorem}\label{RT1more_non_TT1less}
	For every $k > j$, $\RT^1_k \nured \TT^1_j$.	
\end{theorem}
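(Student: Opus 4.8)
The plan is to fix putative Turing functionals $\Phi$ and $\Psi$ witnessing $\RT^1_k \ured \TT^1_j$ and derive a contradiction by a diagonalization against all such pairs. The key point is that $\Phi$ must turn a $k$-coloring $g : \omega \to k$ into a $j$-coloring $f = \Phi(g) : 2^{<\omega} \to j$, and then $\Psi$ must, from any $H \cong 2^{<\omega}$ monochromatic for $f$, recover an infinite set monochromatic for $g$. Since $j < k$, the idea is to build $g$ (by a finite-injury or direct stagewise construction) so that along the way we can force $\Psi$ to commit to a color for $g$ prematurely, and then make $g$ avoid that color sufficiently. First I would recall that a solution $H$ to $f$ need only be order-isomorphic to $2^{<\omega}$, so in particular, given any finite partial approximation to $f$, there are many ways to extend a finite ``stub'' of such an $H$; we will exploit that there is a $\TT^1_j$-solution consistent with almost any finite amount of information about $f$, because we can always build $H$ inside a monochromatic cone once the coloring stabilizes.

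The main construction proceeds as follows. We build $g$ in stages, maintaining a current ``target color'' $c < k$ that $g$ will eventually be (constantly, past some point), so that the unique infinite monochromatic set for $g$ (up to removing finitely much) has color $c$. Because $f = \Phi(g)$ is continuous in $g$, at each stage only finitely much of $g$ has been determined, and correspondingly only finitely much of $f$ is pinned down. We feed $\Psi$ a finite stub of a candidate solution $H$ that is monochromatic for the determined part of $f$ with some color $i < j$, and wait for $\Psi$ to output a value, say $n$, which is supposed to lie in the solution set for $g$ and hence (eventually) carry color $c$. The crucial move: since we have not yet committed $g$ past the use of this computation, we are free to later change $c$ to a different color and to extend $g$ so that $g(n) \neq c$ for the new $c$. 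Repeating this against all pairs $\seq{\Phi,\Psi}$ — there are only countably many — and using that $j < k$ gives us at least one ``spare'' color at every step to switch to, we diagonalize so that the final $g$ has the property that for every $\seq{\Phi,\Psi}$, either $\Phi(g) \notin \dom(\TT^1_j)$, or the stub we built extends to a genuine $H \cong 2^{<\omega}$ monochromatic for $f$ (by completing it inside a monochromatic cone of the final $f$), but $\Psi(g,H)$ fails to be monochromatic for $g$ or fails to be infinite.

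The hard part will be two-fold. First, one must verify that the finite stub of $H$ we commit to at each stage really does extend to a full solution of the \emph{final} coloring $f$: this requires arranging that $f$, on the part of $2^{<\omega}$ above our stub, eventually becomes constant (a cone) of the right color, which in turn constrains how we are allowed to move $g$ — so the bookkeeping between ``what $g$ does'' and ``what cone of $f$ survives'' must be handled carefully, likely by reserving, for each requirement, a dedicated subtree of $2^{<\omega}$ and a dedicated block of inputs to $g$. Second, one must ensure the switches of the target color $c$ can be done while keeping $g$ a legitimate total $k$-coloring with a genuine infinite monochromatic set — i.e., that the construction actually converges on a final $c$ for the ``global'' coloring even as local requirements force temporary changes; here the inequality $k > j$ is what guarantees there is always an unused color available, since each active requirement can ``forbid'' at most one color and we process requirements one at a time (or with a priority ordering so that at most finitely many constraints are active below any given input). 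I expect the cone-completion verification — showing the committed finite stub of $H$ is never ``stranded'' — to be the principal technical obstacle, and I would isolate it as a separate lemma about extending finite partial isomorphic-to-$2^{<\omega}$ sets into monochromatic cones.
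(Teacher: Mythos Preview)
Your proposal has a genuine gap in precisely the place you flag as the ``principal technical obstacle'': ensuring that the finite stub $S$ you commit to extends to an actual $\TT^1_j$-solution of the \emph{final} $f = \Phi(g)$. You propose to arrange that $f$ becomes constant on a cone above the leaves of $S$, but you have no direct control over $f$; you only control $g$, and $\Phi$ is adversarial. Nothing prevents $\Phi$ from producing an $f$ that is nowhere eventually constant, or from arranging that the particular color $i$ of your stub is not dense above its leaves once $g$ is extended. Reserving ``a dedicated subtree of $2^{<\omega}$'' does not help, since $\Phi$ need not respect any such reservation. And if $S$ does not extend to a genuine solution, the value $\Psi(g,S)$ outputs is meaningless and diagonalizing against it accomplishes nothing. (Separately: there is no need to diagonalize against all pairs $\seq{\Phi,\Psi}$; for $\nured$ you fix one pair and build a single $g$ defeating it.)

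The paper's argument runs in the opposite direction, and this inversion is the key idea you are missing. Rather than trying to make the stub $S$ extendible, the paper makes it \emph{non}-extendible: at step $c < j$ it temporarily extends $g$ by the constant color $c$, finds a monochromatic stub $S$ on which $\Psi$ commits to some output $x$ with $g(x) = c$, and then continues $g$ using only colors $> c$. Now $g$ has no infinite monochromatic set of color $c$, so $S$ cannot extend to any solution of $f$; since solutions of $f$ do exist above every node, the color of $S$ under $f$ must fail to be dense above some leaf of $S$, yielding a node $\sigma_{c+1}$ above which that $f$-color is eliminated. Iterating $j$ times eliminates all $j$ colors of $f$ above $\sigma_j$, a contradiction. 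The hypothesis $k > j$ is used exactly here: colors $0,\ldots,j-1$ serve as successive bait values for $g$, and at least one color $\geq j$ remains for the final tail of $g$.
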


\begin{proof}
	The proof is quite similar to the proof that $\RT^1_k \nured \RT^1_j$. We spell out the details. Suppose $\RT^1_k \ured \TT^1_j$ via $\Phi$ and $\Psi$. We define a sequence $\alpha_0 \preceq \cdots \preceq \alpha_j$ of elements of $k^{<\omega}$, regarded as initial segments of an instance of $\RT^1_k$, along with a sequence of strings $\sigma_0 \preceq \cdots \preceq \sigma_j$ of $2^{<\omega}$.
	
	Let $\alpha_0 = \emptystring$ and $\sigma_0 = \emptystring$. Now fix $c < j$ and suppose $\alpha_c$ and $\sigma_c$ have been defined. Search for numbers $m,n \in \omega$ and a finite set $S$ satisfying the following:
	\begin{itemize}
		\item $S \cong 2^{< n}$, and the root of $S$ extends $\sigma_c$;
		\item $\Phi(\alpha_c c^m)(\sigma) \converges$ for all $\sigma \in S$ and has the same value;
		\item $\Psi(\alpha_c c^m,S)(x) \converges = 1$ for some $x \geq |\alpha_c|$.
	\end{itemize}
	The search must succeed, because $g = \alpha_c c^\omega$ is an instance of $\RT^1_k$, so $\Phi(g)$ is in turn an instance $f$ of $\TT^1_j$. Moreover, for any $H \cong 2^{<\omega}$ monochromatic for $f$ with root extending $\sigma_c$ we must have that $\Psi(g,H)$ is an infinite monochromatic set for $g$. In this case, any sufficiently large initial segment of $g$ can serve as $\alpha_c c^m$, and any sufficiently large initial segment of $H$ can serve as $S$.
	
	Having found $m$ and $n$, set $\alpha'_{c} = \alpha_c c^m$. Without loss of generality, we may assume $m+|\alpha_c| > x$ for the $x$ witnessing the third property above, so $\alpha'_{c}(x)$ is defined and equal to $c$. Now, call $\alpha \in k^{<\omega}$ \emph{$c$-good} if $\alpha \succeq \alpha_c'$ and $\alpha(x) > c$ for all $x \geq |\alpha_c'|$. Since $c < j < k$, $c$-good strings exist. Similarly, call $g : \omega \to k$ \emph{$c$-good} if $g \succ \alpha_c'$ and $g(x) > c$ for all $x \geq |\alpha_c'|$. If $g$ is $c$-good, then $\Phi(g)(\sigma) \converges$ for all $\sigma \in S$ and has the same value, and $\Psi(g,S)(x) \converges = 1$ for some $x$ with $g(x) = c$. But $g$, being $c$-good, has no infinite monochromatic set of color $c$, so $S$ cannot be extended to any $H \cong 2^{<\omega}$ monochromatic for $\Phi(g)$. In particular, it cannot be that for every leaf $\lambda$ of $S$ the set $\set{\sigma : \Phi(g)(\sigma) = \Phi(g)(\lambda)}$ is dense above $\lambda$. It follows that there exists a $c$-good $\alpha$, a leaf $\lambda$ of $S$, and a string $\sigma \succeq \lambda$ such that for all $c$-good $\beta \succeq \alpha$ and all $\tau \succeq \sigma$, if $\Phi(\beta)(\tau) \converges$ then $\Phi(\beta)(\tau) \neq \Phi(\beta)(\lambda)$. Fix some such $\alpha$ and $\sigma$, and let $\alpha_{c+1} = \alpha$ and $\sigma_{c+1} = \sigma$.
	
	Now, fix any $g : \omega \to k$ extending $\alpha_j$ which is $c$-good for all $c < j$. Let $f = \Phi(g)$. By assumption, $f$ is a $j$-coloring. But by induction, $|\set{f(\tau) : \tau \succeq \sigma_c}| \leq j-c$ for all $c \leq j$. In particular, this means $|\set{f(\tau) : \tau \succeq \sigma_j}| = 0$, which is impossible.
\end{proof}

Since, as noted in the introduction, $\RT^1_k \ured \fopart \TT^1_k \ured \TT^1_k$ for all $k$, we immediately get the following corollary.

\begin{corollary}
	For all $k > j$, $\TT^1_k \nured \TT^1_j$ and $\fopart \TT^1_k \nured \fopart \TT^1_j$.
\end{corollary}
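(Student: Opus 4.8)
The plan is to derive both non-reductions formally from \Cref{RT1more_non_TT1less} using transitivity of $\ured$ together with the two reduction chains recorded in \Cref{sec:intro,sec:background}: namely $\RT^1_k \ured \TT^1_k$ (the uniform version of the Chubb--Hirst--McNicholl argument) and $\RT^1_k \ured \fopart \TT^1_k \ured \TT^1_k$ (justified via the characterization of $\fopart\mathsf{P}$ as the strongest first-order problem Weihrauch below $\mathsf{P}$). No new construction is required; the work is entirely a matter of chaining known reductions.

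First I would handle $\TT^1_k \nured \TT^1_j$. Suppose toward a contradiction that $\TT^1_k \ured \TT^1_j$. Since $\RT^1_k \ured \TT^1_k$, transitivity of $\ured$ gives $\RT^1_k \ured \TT^1_j$, contradicting \Cref{RT1more_non_TT1less} (which applies since $k > j$). Next I would handle $\fopart \TT^1_k \nured \fopart \TT^1_j$. Suppose toward a contradiction that $\fopart \TT^1_k \ured \fopart \TT^1_j$. Appending $\RT^1_k \ured \fopart \TT^1_k$ at one end and $\fopart \TT^1_j \ured \TT^1_j$ at the other, transitivity again yields $\RT^1_k \ured \TT^1_j$, once more contradicting \Cref{RT1more_non_TT1less}.

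There is no real obstacle here: the statement is a purely formal consequence of the already-established theorem and the reduction diagram. The only point worth verifying carefully is that the chain $\RT^1_k \ured \fopart \TT^1_k \ured \TT^1_k$ is legitimately available — the left reduction because the color of a monochromatic $H \cong 2^{<\omega}$ can be read off an initial segment of $H$ (making $\RT^1_k$ first-order and Weihrauch below $\TT^1_k$, hence below its first-order part), and the right reduction being immediate from the definition of $\fopart$. Both were noted in \Cref{sec:background}, so the corollary follows at once.
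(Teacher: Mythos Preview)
Your proposal is correct and is essentially identical to the paper's own argument: the paper simply notes that the chain $\RT^1_k \ured \fopart \TT^1_k \ured \TT^1_k$ was established in \Cref{sec:background}, and then states the corollary as an immediate consequence of \Cref{RT1more_non_TT1less} via transitivity, exactly as you do.
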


In the opposite direction, the next result shows that $\TT^1_k$ cannot be obtained from $\RT^1$ no matter how many colors the latter is allowed to use. The proof is considerably more subtle and will serve as a basis for the remaining major theorems of this section. We first prove a lemma.

\begin{lemma}\label{lem:incomp_strings}
	Fix $n \geq 1$. Let $S_i \subseteq 2^{< \omega}$, $i < n$, be pairwise disjoint sets of incomparable strings with $|S_i| \geq n$. There is a set of incomparable strings $\{ \sigma_i : i < n \}$ such that $\sigma_i \in S_i$ for each $i < n$.
\end{lemma}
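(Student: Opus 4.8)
The plan is to argue by induction on $n$. The base case $n = 1$ is immediate: any single string in $S_0$ forms a (trivially) pairwise incomparable family. For the inductive step, I would first reduce to the case where every $S_i$ is \emph{finite}, by replacing each $S_i$ with an arbitrary subset of size exactly $n$ (this is possible since $|S_i| \geq n$, it preserves pairwise disjointness and the property of being a set of incomparable strings, and any valid selection from the shrunken sets is also a valid selection from the original ones).

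With $S := \bigcup_{i < n} S_i$ now finite, the core idea is to choose a string $\sigma^* \in S$ of \emph{maximal length}, say $\sigma^* \in S_{i_0}$, and set $\sigma_{i_0} := \sigma^*$. The key claim is that for every $i \neq i_0$, at most one element of $S_i$ is comparable with $\sigma^*$: no $\tau \in S_i$ can satisfy $\tau \succ \sigma^*$ (that would contradict maximality of $|\sigma^*|$); we cannot have $\tau = \sigma^*$ since the $S_j$ are pairwise disjoint; and if $\tau, \tau' \in S_i$ were two distinct prefixes of $\sigma^*$, then $\tau$ and $\tau'$ would themselves be comparable, contradicting that $S_i$ is a set of incomparable strings. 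Hence $S_i' := \set{\tau \in S_i : \tau \mid \sigma^*}$ has size at least $n - 1$ for each $i \neq i_0$.

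Now the $n - 1$ sets $S_i'$ (for $i \neq i_0$) are pairwise disjoint, each is a set of incomparable strings, and each has size at least $n - 1$, so the inductive hypothesis yields a pairwise incomparable family $\set{\sigma_i : i \neq i_0}$ with $\sigma_i \in S_i'$. By construction each such $\sigma_i$ is incomparable with $\sigma^* = \sigma_{i_0}$, so $\set{\sigma_i : i < n}$ is the desired family. The only point requiring care is the counting in the inductive step: one must check that discarding the (at most one) string comparable with $\sigma^*$ from each of the $n-1$ remaining sets still leaves at least $n - 1$ strings in each, which is exactly what the maximal-length choice together with the antichain hypothesis guarantees; everything else is bookkeeping.
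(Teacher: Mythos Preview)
Your proof is correct and follows essentially the same inductive strategy as the paper: pick an extremal string, show that at most one element of each remaining $S_i$ is comparable with it, discard those, and recurse. The only cosmetic difference is that the paper selects $\sigma$ of maximal \emph{rank} in $S$ rather than maximal length, which makes the preliminary reduction to finite sets unnecessary (any chain in $S$ meets each antichain $S_i$ at most once, so ranks are bounded by $n-1$ regardless); otherwise the arguments are the same.
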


\begin{proof}
	The proof is by induction on $n$. For $n=1$, the result is trivial, so assume $n > 1$. Let $S = \cup_{i < n} S_i$. Fix $\sigma \in S$ such that $\text{rk}_S(\sigma)$ is maximal. By reindexing the $S_i$ sets if necessary, we can assume that $\sigma \in S_{n-1}$. Let 
	\[
		S' = S \setminus \{ \tau \in S : \tau \text{ and } \sigma \text{ are comparable} \} \text{ and } S_i' = S_i \cap S' \text{ for } i < n-1.
	\]
	Because $\text{rk}_S(\sigma)$ is maximal, there are no strings $\tau \in S$ such that $\sigma \prec \tau$. Since the sets $S_i$ consist of incomparable strings, it follows that $|S_i \setminus S_i'| \leq 1$, and hence $|S_i'| \geq n-1$, for each $i < n-1$. Therefore, we can apply the inductive hypothesis to choose a set of incomparable strings $\sigma_i \in S'_i$ for $i < n-1$.  The set $\{ \sigma_0, \ldots, \sigma_{n-2}, \sigma \}$ is the desired set of incomparable strings. 
\end{proof}

The following result may be regarded as an analogue of Lemma 3.8 of Corduan, Groszek, and Mileti~\cite{CGM-2010}, which establishes that $\RCA_0 + \mathsf{B}\Sigma^0_2$ does not prove $\TT^1$.

\begin{theorem}\label{1TTnuredRT1}
	For every $k \geq 2$ and every $j \geq 1$, $\fopart \TT^1_k \nured \RT^1_j$.
\end{theorem}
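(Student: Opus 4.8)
\medskip
\noindent\textit{Proof proposal.} The plan is to argue by contradiction. Suppose $\fopart\TT^1_k \ured \RT^1_j$ via Turing functionals $\Phi$ and $\Psi$; we will produce a single instance of $\fopart\TT^1_k$ witnessing that this cannot be. Since the operation $\mathsf{P}\mapsto\fopart\mathsf{P}$ is monotone for $\ured$ and $\TT^1_2 \ured \TT^1_k$, it suffices to treat $k=2$: we build a computable coloring $f : 2^{<\omega}\to 2$ and take as our $\fopart\TT^1_k$-instance the triple $\seq{\emptyset,\Delta,\Gamma}$, where $\Delta$ computes $f$ and $\Gamma$, on an input set $H$, reads off enough of $H$ to locate a prescribed ``fingerprint'' of $H$ (a node of $H$ singled out by $f$) and outputs a natural number coding it. The crux is that $\Gamma$ must be designed so that it halts on \emph{every} $H\cong 2^{<\omega}$ monochromatic for $f$, yet so that the set of numbers it returns from such $H$ is something we control. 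By the recursion theorem we may build $f$ with access to an index for $\seq{\emptyset,\Delta,\Gamma}$, hence compute, stage by stage as $f$ is built, the $j$-coloring $g := \Phi(\seq{\emptyset,\Delta,\Gamma})$ of $\omega$.

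The construction of $f$ is a finite-injury process driven by $g$, imitating the standard argument that $\RT^1_{j+1} \nured \RT^1_j$. In that argument one builds an $\RT^1_{j+1}$-instance whose intended homogeneous color is repeatedly revised, so that each time $\Psi$ commits---on some color class of the image instance that still looks infinite---to the current guess, that guess gets falsified; the image has only $j$ colors, so it can commit at most $j$ times, while the source is free to use $j+1$, forcing a contradiction. Here the source is a $2$-coloring of a tree and cannot carry $j+1$ colors outright, so the extra room must come from the tree structure: I would lay out $f$ along a nested (or branching) sequence of regions together with a partial copy of $2^{<\omega}$, maintain a current ``target'' value for $\Gamma$, and, whenever $g$ enumerates enough of a plausibly-infinite class $c$ for $\Psi$ on $g^{-1}(c)\cap[0,s)$ to output the current target, perform a revision: pass to a fresh region, update the target, and continue extending the partial copy there, arranging matters so that if $g^{-1}(c)$ is in fact infinite then $\Psi(\seq{\emptyset,\Delta,\Gamma},g^{-1}(c))(0)$ equals the \emph{old} target, which the new target together with the final layout of $f$ makes an invalid solution. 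A counting argument ($g$ has only $j$ classes available to be ``used'' this way) bounds the number of revisions, so the target stabilizes.

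\Cref{lem:incomp_strings} enters in two places: in exhibiting the monochromatic copy of $2^{<\omega}$ inside the final $f$ that realizes the last target (so that the $\fopart\TT^1_k$-solutions are genuinely as small and as pinned-down as claimed), and in verifying that copies aimed at discarded targets are killed off. After several revisions the region hosting the surviving copy has been recolored repeatedly, so the relevant color class need not be dense; instead, at each step of the level-by-level construction of the copy we are handed, for each current leaf, a set of correctly-colored candidate extensions, and must pick pairwise-incomparable representatives---precisely the configuration of \Cref{lem:incomp_strings}, available provided the candidate sets have been allowed to grow wide enough, which the construction ensures by working sufficiently far out before committing.

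The main obstacle is the self-referential bookkeeping. Because $f$ determines $g$, we control $g$'s future but must nonetheless \emph{read} $g$ to time our revisions; a revision must respect all earlier coloring decisions---a $2$-coloring is write-once, so the system of regions has to be fixed in advance so that every possible revision is consistent with what is already colored; and in the end we must name a genuine $\RT^1_j$-solution of $g$ (an infinite color class) on which $\Psi$'s output is provably not a $\Gamma$-value of any monochromatic copy in $f$. Reconciling these three demands---boundedly many revisions, write-once consistency, and an honest bad solution---while keeping the $\Gamma$-values controlled enough to catch $\Psi$, is the heart of the proof, and is where the combinatorics of \Cref{lem:incomp_strings} and the regional layout are needed.
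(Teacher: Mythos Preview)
Your proposal heads in a much more complicated direction than the paper's argument, and as written it has a genuine gap.

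The paper's proof is a one-shot Cohen forcing argument, not a finite-injury construction, and it does not invoke the recursion theorem. Because $j$ is fixed in advance, $\Gamma$ can be defined outright: $\Gamma(f,H)(0)$ simply outputs (a code for) the first $j$ pairwise incomparable strings of $H$. With this $\Gamma$ in hand, one passes to a sufficiently generic condition $\alpha$ forcing the existence of a nonempty $C \subseteq j$ together with, for each $c \in C$, a finite set $E_c$ homogeneous for $\check g = \Phi(\check f,\mathrm{Id},\Gamma)$ of color $c$ on which $\Psi$ has already converged to a $j$-tuple $\seq{\sigma_{c,0},\ldots,\sigma_{c,j-1}}$ of incomparable strings, while also forcing that every color outside $C$ occurs only boundedly in $\check g$. \Cref{lem:incomp_strings} is then applied \emph{once}, to the family $\{\{\sigma_{c,0},\ldots,\sigma_{c,j-1}\} : c \in C\}$ of $|C| \le j$ many $j$-element antichains, to extract a single $\sigma_c$ from each so that $\{\sigma_c : c \in C\}$ is itself an antichain. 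The coloring $f$ is completed above $\alpha$ by setting $f(\tau) = 1 - f(\sigma_c)$ for all sufficiently long $\tau \succeq \sigma_c$; incomparability of the $\sigma_c$ makes these assignments consistent. The contradiction is immediate: some $c \in C$ has infinite color class, so $\seq{\sigma_{c,0},\ldots,\sigma_{c,j-1}}$ must equal $\Gamma(f,H)(0)$ for a genuine monochromatic $H$, whence $\sigma_c \in H$; but nothing above $\sigma_c$ shares its color.

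The gap in your plan is the ``target'' mechanism. For the revision count to be bounded by $j$, you need each revision to consume a color: $\Psi$ commits on some class $c$, and that commitment is permanently wrong thereafter. But you trigger a revision only when $\Psi$ outputs \emph{the current target}; nothing forces $\Psi$ to do so. For your argument to go through you would need the current target to be the \emph{unique} valid $\fopart\TT^1_2$-solution at each stage, and this is exactly what a $2$-coloring of $2^{<\omega}$ does not give you cheaply: any region supporting a monochromatic copy supports many such copies, with many distinct fingerprints under any reasonable $\Gamma$. The paper sidesteps this entirely by not attempting to pin the solution set down to a point; instead it lets $\Gamma$ return $j$ nodes and uses \Cref{lem:incomp_strings} to sabotage one node from each of the at most $j$ committed outputs simultaneously. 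Your two stated uses of the lemma (assembling and dismantling monochromatic copies level by level) are not where the lemma actually does its work here. The recursion theorem is likewise unnecessary in this setting; the paper only brings it in for the next theorem, against $\RT^1_+$, precisely because there $j$ is not known in advance and $\Gamma$ cannot be written down without it.
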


\begin{proof}
	It suffices to prove the result for $k = 2$. Seeking a contradiction, suppose $\fopart \TT^1_2 \ured \RT^1_j$ via $\Phi$ and $\Psi$. Let $\Gamma$ be a functional such that, given $f : 2^{<\omega} \to 2$ and $S \cong 2^{<\omega}$, $\Gamma(f,S)(0)$ outputs $\seq{\sigma_0,\ldots,\sigma_{j-1}}$ for the least $j$ many pairwise incomparable strings~ $\sigma_0,\ldots,\sigma_{j-1} \in S$. In particular, for every $f : 2^{<\omega} \to 2$, $\seq{f,\mathrm{Id},\Gamma}$ is an instance of $\fopart \TT^1_2$. Hence by assumption, $\Phi(f,\mathrm{Id},\Gamma)$ is an instance $g$ of $\RT^1_j$. Moreover, note that for each $c < j$, either $g(x) \neq c$ for all sufficiently large $x$, or there is a finite set $E$ monochromatic for $g$ with color $c$ such that $\Psi(f,\mathrm{Id},\Gamma,E)(0) \converges = \seq{\sigma_0,\ldots,\sigma_{j-1}}$ for some pairwise incomparable strings $\sigma_0,\ldots,\sigma_{j-1}$.
	
	Consider Cohen forcing, with conditions $\alpha \in 2^{<\omega}$ regarded as initial segments of a $2$-coloring of $2^{<\omega}$. Let $\check{f}$ be a name in the forcing language for a generic such $2$-coloring, and let $\check{g}$ be a name for $\Phi(\check{f},\mathrm{Id},\Gamma)$. From the above discussion, it follows that we can find a condition $\alpha$ along with a number $n_0$, a non-empty set $C \subseteq j$ and, for each $c \in C$, a finite set $E_c$, such that $\alpha$ forces each of the following:
	\begin{itemize}
		\item $E_c$ is monochromatic for $\check{g}$ with color $c$;
		\item $\Psi(\check{f},\mathrm{Id},\Gamma,E_c)(0) \converges$;
		\item $\check{g}(x) \in C$ for all $x > n_0$.
	\end{itemize}
	Notice that the first two clauses are $\Sigma^0_1$, while the third is $\Pi^0_1$. Therefore, if $f : 2^{<\omega} \to 2$ is any coloring extending $\alpha$, whether generic or not, each of the above clauses holds if $\check{f}$ is replaced by $f$ and $\check{g}$ by $g = \Phi(f,\mathrm{Id},\Gamma)$. 
	
	By definition of $\Gamma$, for each $c \in C$ we have $\Psi(\alpha,\mathrm{Id},\Gamma,E_c)(0) = \seq{\sigma_{c,0},\ldots,\sigma_{c,j-1}}$ for some pairwise incomparable strings $\sigma_{c,0},\ldots,\sigma_{c,j-1} \in 2^{<\omega}$. By standard use conventions, we may assume $\sigma_{c,0},\ldots,\sigma_{c,j-1} < |\alpha|$ for all $c$. Apply \Cref{lem:incomp_strings} to choose $\sigma_c \in \set{\sigma_{c,0},\ldots,\sigma_{c,j-1}}$ for each $c \in C$ such that $\set{\sigma_c : c \in C}$ is a set of incomparable strings.
		
	Finally, we are ready to define $f$. For all $\sigma < |\alpha|$, let $f(\sigma) = \alpha(\sigma)$. As noted above, this defines $f(\sigma_c) = \alpha(\sigma_c)$ for all $c \in C$. If $\sigma \geq |\alpha|$ and $\sigma \nsucceq \sigma_c$ for all $c \in C$, let $f(\sigma) = 0$. If $\sigma \geq |\alpha|$ and $\sigma \succeq \sigma_c$ for some $c \in C$, then $\sigma \nsucceq \sigma_d$ for all $d \neq c$ in $C$ since $\sigma_d$ and $\sigma_c$ are incomparable. In this case, we let $f(\sigma) = 1 - f(\sigma_c)$. This completes the definition. To complete the proof, let $g = \Phi(f,\mathrm{Id},\Gamma)$. Since $f$ extends $\alpha$ it follows by our earlier remark that each $E_c$ is monochromatic for $g$ with color $c$, and every infinite monochromatic set for $g$ has color some $c \in C$. It follows that for some $c \in C$, $E_c$ is extendible to an $\RT^1_j$-solution to $g$, and therefore $\seq{\sigma_{c,0},\ldots,\sigma_{c,j-1}}$ is a $\fopart \TT^1_2$-solution to $\seq{f,\mathrm{Id},\Gamma}$. This means that $\seq{\sigma_{c,0},\ldots,\sigma_{c,j-1}} = \Gamma(f,H)$ for some $H \cong 2^{<\omega}$ monochromatic for~$f$. But for every $\sigma \succeq \sigma_c$ with $|\sigma| \geq \alpha$, and in particular, for every such $\sigma \in H$, we have that $f(\sigma) \neq f(\sigma_c)$. Thus, $H$ cannot be monochromatic for $f$, a contradiction.
\end{proof}

\begin{corollary}\label{cor:1TTnuredRT1}
	For every $k \geq 2$ and every $j \geq 1$, $\TT^1_k \nured \RT^1_j$.
\end{corollary}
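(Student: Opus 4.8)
The plan is to derive this immediately from \Cref{1TTnuredRT1} together with the general inequality $\fopart \TT^1_k \ured \TT^1_k$, which was already recorded in \Cref{sec:background}. Concretely, I would argue by contraposition: suppose toward a contradiction that $\TT^1_k \ured \RT^1_j$ for some $k \geq 2$ and $j \geq 1$. Since Weihrauch reducibility is transitive, and since $\fopart \TT^1_k \ured \TT^1_k$ holds for every $k$, we would obtain $\fopart \TT^1_k \ured \RT^1_j$, directly contradicting \Cref{1TTnuredRT1}.

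The only thing to justify, then, is the chain $\fopart \TT^1_k \ured \TT^1_k \ured \RT^1_j$, and the first link is the one worth a sentence: given an instance $\seq{A,\Delta,\Gamma}$ of $\fopart \TT^1_k$, one feeds $\Delta(A)$ (a $\TT^1_k$-instance) to the assumed reduction, obtains an $H \cong 2^{<\omega}$ monochromatic for it, and then outputs $\Gamma(A,H)(0)$. This is exactly the standard fact that the first-order part of a problem always lies Weihrauch below the problem itself, so no new work is needed.

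There is essentially no obstacle here — the corollary is a formal consequence of the theorem preceding it and the transitivity of $\ured$. The substantive content (the Cohen-forcing construction and the application of \Cref{lem:incomp_strings}) all lives in the proof of \Cref{1TTnuredRT1}; the corollary merely transfers that non-reducibility from the first-order part of $\TT^1_k$ up to $\TT^1_k$ itself, using that passing to the first-order part can only weaken a problem.
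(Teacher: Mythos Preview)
Your proposal is correct and matches the paper's intended argument: the corollary is stated without proof precisely because it follows immediately from \Cref{1TTnuredRT1} together with the inequality $\fopart \TT^1_k \ured \TT^1_k$ recorded in \Cref{sec:background}, via transitivity of $\ured$.
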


At first blush, it may seem that $\RT^1_j$ in the statement of \Cref{1TTnuredRT1} could be replaced by $\RT^1_+$. After all, an instance of $\RT^1_+$ is an instance of $\RT^1_j$ for some $j$, and this $j$ is specified. But there is a problem with lifting the proof directly. Namely, the instance of $\fopart \TT^1_k$ constructed in the proof requires both an instance $f$ of $\TT^1_k$ and a functional $\Gamma$, and this functional very much depends on $j$. The number $j$ is, in turn, computed via a hypothetical reduction $\Phi$ from both $f$ and $\Gamma$, which results in a circularity. The natural attempt to get around this is to use the recursion theorem, and this works, but at a cost: the argument only works to show that $\fopart \TT^1_k \nured \RT^1_+$ for $k \geq 3$.

\begin{theorem}\label{thm:TT1k_not_to_RT1+}
	For every $k \geq 3$, $\fopart \TT^1_k \nured \RT^1_+$.	
\end{theorem}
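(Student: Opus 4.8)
The plan is to adapt the proof of \Cref{1TTnuredRT1} with $\RT^1_j$ replaced by $\RT^1_+$. As noted in the discussion above, the obstruction is that the $\fopart \TT^1_k$-instance built there is a triple $\seq{f,\mathrm{Id},\Gamma}$ whose functional component $\Gamma$, on input $\seq{f,H}$ with $H$ monochromatic for $f$, must return a code for $j$ pairwise incomparable strings drawn from $H$, where $j$ is the number of colors of the $\RT^1_+$-instance that a putative reduction $\Phi$ produces from $\seq{f,\mathrm{Id},\Gamma}$; thus $j$ depends on $\Gamma$ itself. I would untangle this with the recursion theorem. Uniformly in an index $i$, define a Turing functional $\Gamma_i$ that on input $\seq{f,H}$ with $f : 2^{<\omega}\to k$ simulates $\Phi$ on the triple $\seq{f,\mathrm{Id},i}$ until it produces a first coordinate $j$, then searches $H$ for the least $j$ many pairwise incomparable strings $\sigma_0,\dots,\sigma_{j-1}$ and outputs $\seq{\sigma_0,\dots,\sigma_{j-1}}$. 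Since $i\mapsto\Gamma_i$ is computable, the recursion theorem supplies an index $e$ for which $\Gamma_e$ behaves as $\Gamma_i$ with $i=e$; in effect $\Gamma_e$ knows that it is itself the third coordinate of the instance $\seq{f,\mathrm{Id},\Gamma_e}$ it is defining, and so can compute the right value of $j$.

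With such an $e$ fixed, I would run the Cohen-forcing argument of \Cref{1TTnuredRT1} with $\Gamma$ replaced by $\Gamma_e$. Granting for the moment that some forcing condition makes $\Phi$ commit to a first coordinate (the delicate point, discussed below), fix a condition $\alpha$ forcing $\Phi$ to commit to a specific $j$; with $j$ now a fixed number, continue verbatim as in \Cref{1TTnuredRT1} to force (extending $\alpha$ if needed) a nonempty $C\subseteq j$, finite sets $E_c$ for $c\in C$ monochromatic for $\check g=\Phi(\check f,\mathrm{Id},\Gamma_e)$ with color $c$, convergence of each $\Psi(\check f,\mathrm{Id},\Gamma_e,E_c)(0)$, and a bound past which $\check g$ takes values only in $C$. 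As before the first clauses are $\Sigma^0_1$ and the last is $\Pi^0_1$, so all hold of $g=\Phi(f,\mathrm{Id},\Gamma_e)$ for every coloring $f\succeq\alpha$. Decode from each $\Psi(\alpha,\mathrm{Id},\Gamma_e,E_c)(0)$ the $j$ pairwise incomparable strings it names, apply \Cref{lem:incomp_strings} to pick one string $\sigma_c$ per $c\in C$ with $\set{\sigma_c : c\in C}$ an antichain, and define $f\succeq\alpha$ that agrees with $\alpha$ below $|\alpha|$, is $0$ elsewhere, except that above each $\sigma_c$ it takes the color $1-f(\sigma_c)\in\set{0,1}$. Because $\alpha$ already forced $\Phi$ to commit to $j$, we have $\Gamma_e(f,H)(0)\converges$ for every $H\cong 2^{<\omega}$ monochromatic for $f$, so $\seq{f,\mathrm{Id},\Gamma_e}$ is a valid instance; hence $\Phi(f,\mathrm{Id},\Gamma_e)=\seq{j,g}$ is a valid $\RT^1_+$-instance, some $E_c$ extends to an infinite $g$-monochromatic set, and so $\Psi$ returns a $\fopart\TT^1_k$-solution naming some $\sigma_c$, i.e.\ an $H\cong 2^{<\omega}$ monochromatic for $f$ with $\sigma_c\in H$. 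But $f$ has color $1-f(\sigma_c)\neq f(\sigma_c)$ at every string properly above $\sigma_c$ of length at least $|\alpha|$, and $H$ contains such strings, so $H$ is not monochromatic for $f$: a contradiction.

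The hard part will be the deferred point: showing that $\Gamma_e$ really does yield a valid instance of $\fopart\TT^1_k$, equivalently (since the search for $j$ incomparable strings in a monochromatic $H\cong 2^{<\omega}$ always succeeds) that $\Phi$ commits to a first coordinate on the relevant inputs. A priori this is guaranteed only for inputs that are already valid instances, which is exactly what is in question, and one cannot simply rule out a generic $f$ on which $\Phi$ never commits. I expect this is handled by equipping $\Gamma_e$ with a second, unconditional way to halt, triggered by the coloring using a color outside the two employed in the core construction above; reserving that color is precisely the cost that restricts the argument to $k\geq 3$ and leaves the case $k=2$ open.
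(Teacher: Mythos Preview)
Your proposal is correct and follows essentially the same approach as the paper, including the recursion-theorem construction of $\Gamma$ and the diagnosis that a third color must be reserved for an unconditional ``escape'' branch of $\Gamma$. The paper implements this exactly as you anticipate: $\Gamma$ first inspects the color of the least element $\sigma$ of $S$, and if $f(\sigma)=0$ simply outputs $\sigma$; then the constantly-$0$ coloring $f_0$ makes $\seq{f_0,\mathrm{Id},\Gamma}$ a bona fide instance on which $\Phi$ must commit to some $j$ along an initial segment $\alpha_0$, after which the argument of \Cref{1TTnuredRT1} is rerun using only ``good'' colorings that avoid color $0$ beyond $|\alpha_0|$ (so the core construction flips between colors $1$ and $2$ rather than $0$ and $1$).
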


\begin{proof}
	It suffices to prove the result for $k = 3$. Suppose to the contrary that $\fopart \TT^1_3 \ured \RT^1_+$, say via $\Phi$ and $\Psi$. We build a $3$-coloring $f$ of $2^{<\omega}$ and a Turing functional $\Gamma$ so that $\seq{f,\mathrm{Id},\Gamma}$ is an instance of $\fopart \TT^1_3$ witnessing a contradiction. The way we define $\Gamma$ is as follows. Let $\Delta_0,\Delta_1,\ldots$ be the standard enumeration of all oracle Turing functionals (so named in order to avoid notationally clashing with the fixed functional $\Phi$). We define a certain computable function $h$ and then choose a fixed point $e_0$ for $h$ using the recursion theorem, so that $\Delta_{h(e_0)}(A) = \Delta_{e_0}(A)$ for all oracles $A$. We then let $\Gamma = \Delta_{e_0}$.
	
	For the definition of $h$, we decompose each oracle $A$ as $\seq{f,S}$, with our interest being in the situation where $f$ is a $3$-coloring of $2^{<\omega}$ and $S$ is a monochromatic set for $f$. Of course, for an arbitrary $A$, either $f$ or $S$ or both may fail to have these properties, but we do not care about these cases. We obtain $h$ using the $s$-$m$-$n$ theorem so as to satisfy the following properties, for all $e \in \omega$ and all oracles $A = \seq{f,S}$. First, find the least $\sigma \in S$. If $f(\sigma) = 0$, then let $\Delta_{h(e)}(A)(0) \converges = \sigma$. If $f(\sigma) > 0$, then wait for $\Phi(f,\mathrm{Id},\Delta_e)(0)$ to converge and output a number $j \geq 1$. (The idea is that if $\seq{f,\mathrm{Id},\Delta_e}$ is an instance of $\fopart \TT^1_3$, then $\Phi(f,\mathrm{Id},\Delta_e)$ is an instance $\seq{j,g}$ of $\RT^1_+$, so $\Phi(f,\mathrm{Id},\Delta_e)(0) = j$.) Then, find the first $j$ many incomparable strings $\sigma_0,\ldots,\sigma_{j-1} \in S$ and let $\Delta_{h(e)}(A)(0) \converges = \seq{\sigma_0,\ldots,\sigma_{j-1}}$. 
	
	Let $\Gamma$ be as described above, and consider the trivial coloring $f_0 : 2^{<\omega} \to 3$ such that $f_0(\sigma) = 0$ for all $\sigma \in 2^{<\omega}$. If $H \cong 2^{<\omega}$ is monochromatic for $f_0$ then it is clear from the definition that $\Gamma(f_0,H)(0) \converges$. (Namely, $\Gamma(f_0,H)(0)$ outputs the least string in $H$.) Thus, $\seq{f_0,\mathrm{Id},\Gamma}$ is an instance of $\fopart \TT^1_3$. By assumption, $\Phi(f_0,\mathrm{Id},\Gamma)(0) \converges = j$ for some $j \geq 1$. Fix a finite initial segment $\alpha_0 \prec f_0$ such that $\Phi(\alpha_0,\mathrm{Id},\Gamma)(0) \converges = j$. Say $f : 2^{<\omega} \to 3$ is \emph{good} if $f$ extends $\alpha_0$ and $f(\sigma) \neq 0$ for all $\sigma \geq |\alpha_0|$. Similarly, say $\alpha \in 3^{<\omega}$, regarded as a $3$-coloring of a finite subset of $2^{<\omega}$, is \emph{good} if $\alpha_0 \preceq \alpha$ and $\alpha(\sigma) \neq 0$ for all $\sigma \geq |\alpha_0|$.
	
	Now, consider any good coloring $f : 2^{<\omega} \to 3$. Then $\seq{f,\mathrm{Id},\Gamma}$ is an instance of $\fopart \TT^1_3$. Indeed, we have that $\Phi(f,\mathrm{Id},\Gamma)(0) \converges = j$ since $f$ extends $\alpha_0$. Moreover, any $H \cong 2^{<\omega}$ monochromatic for $f$ must have color $1$ or $2$. Thus, by definition of $\Gamma$, we have that $\Gamma(f,H)(0) \downarrow = \seq{\sigma_0,\ldots,\sigma_{j-1}}$ for the least $j$ many incomparable strings $\sigma_0,\ldots,\sigma_{j-1} \in H$. And $\Phi(f,\mathrm{Id},\Gamma) = \seq{j,g}$ for some instance $g$ of $\RT^1_j$. So we can now proceed precisely as in the proof of \Cref{1TTnuredRT1}, only working with good conditions in $3^{<\omega}$, and building a good $3$-coloring $f$.
\end{proof}

Remarkably, after a preprint of this paper was circulated, Pauly \cite{Pauly-TA} has shown that $\fopart \TT^1_2 \ured \RT^1_+$. Thus, \Cref{thm:TT1k_not_to_RT1+} is optimal. 

We now move from $\fopart \TT^1_k$ to $\TT^1_k$, and from $\RT^1_+$ to $\RT^1_{\mathbb{N}}$. The obstacle with needing to know a functional $\Gamma$ ``in advance'' does not arise here (since we do not need to build an instance of a first-order part), so we get a result that holds for all $k \geq 2$.

\begin{theorem}\label{thm:TT1k_not_to_RT1N}
	For every $k \geq 2$, $\TT^1_k \nured \RT^1_{\mathbb{N}}$.	
\end{theorem}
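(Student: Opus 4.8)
The plan is to follow the template of \Cref{1TTnuredRT1}, making two modifications. The easier one: since we work with $\TT^1_2$ directly rather than with its first-order part, there is no auxiliary functional $\Gamma$ to build, the backward functional $\Psi$ simply enumerates a candidate tree $H \cong 2^{<\omega}$, and we need only the combinatorics of \Cref{lem:incomp_strings}; in particular there is no circularity of the kind that forced \Cref{thm:TT1k_not_to_RT1+} to restrict to $k \geq 3$, so the argument will work for every $k \geq 2$. The harder one: in place of the a~priori bound $j$ on the number of colors of the image coloring $\check g = \Phi(\check f)$, which came for free in \Cref{1TTnuredRT1}, we must manufacture such a bound.

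It suffices to treat $k = 2$, so suppose $\TT^1_2 \ured \RT^1_{\mathbb{N}}$ via $\Phi$ and $\Psi$, and set up Cohen forcing with conditions $\alpha \in 2^{<\omega}$ coding initial segments of a $2$-coloring $\check f$ of $2^{<\omega}$; put $\check g = \Phi(\check f)$. Since $\Phi$ is a valid reduction, \emph{every} $2$-coloring $f$ has $\mathrm{ran}(\Phi(f))$ finite, so the (Polish) space of $2$-colorings equals $\bigcup_n \set{f : \mathrm{ran}(\Phi(f)) \subseteq n}$, a countable union of closed sets; by the Baire category theorem one of them is somewhere dense, yielding a condition together with a finite $C_0$ forced to contain $\mathrm{ran}(\check g)$. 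Iterating (repeatedly replacing the current condition $\alpha$ and color set $C$ by some $\beta \succeq \alpha$ and $C \setminus \set{c}$ whenever $\beta$ forces $\check g(x) \in C \setminus \set{c}$ for all large $x$; this halts since $|C|$ strictly decreases and cannot reach $\varnothing$, as $\check g$ is total) produces a condition $\alpha^*$, a nonempty finite $C^* \subseteq \omega$, and $n^* \in \omega$ with $\alpha^* \forces (\forall x > n^*)[\check g(x) \in C^*]$ and, for every $c \in C^*$ and every $m$, the conditions forcing ``$\check g(x) = c$ for some $x > m$'' dense below $\alpha^*$. Hence a sufficiently generic $f^* \succeq \alpha^*$ has $g^* := \Phi(f^*)$ with $g^*(x) \in C^*$ for $x > n^*$ and with every $c \in C^*$ occurring infinitely often in $g^*$.

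For each $c \in C^*$ the set $M_c := (g^*)^{-1}(\set{c})$ is an infinite set monochromatic for $g^* = \Phi(f^*)$, so $\Psi(f^*, M_c) \cong 2^{<\omega}$ is monochromatic for $f^*$; in particular it is infinite and contains an antichain of size $|C^*|$. Thus we may fix $\alpha_1 \succeq \alpha^*$ and, for each $c \in C^*$, a finite initial segment $E_c$ of $M_c$ (the least finitely many elements of $(g^*)^{-1}(\set{c})$) such that $\alpha_1$ forces the $\Sigma^0_1$ statements that $E_c$ is an initial segment of $\check g^{-1}(\set{c})$ and that $\Psi(\check f, E_c)$ enumerates $|C^*|$ pairwise incomparable strings of the tree it is building; the $\Pi^0_1$ statement $(\forall x > n^*)[\check g(x) \in C^*]$ persists under $\alpha_1$. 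Apply \Cref{lem:incomp_strings} inside the trees $\Psi(\alpha_1, E_c)$ to choose, for each $c \in C^*$, a string $\sigma_c \in \Psi(\alpha_1, E_c)$ with $\set{\sigma_c : c \in C^*}$ pairwise incomparable, and extend $\alpha_1$ to a condition $\alpha_2$ deciding $f$ on every $\sigma_c$ and each of its prefixes. Now define a $2$-coloring $f \succeq \alpha_2$ by setting $f(\tau) = 1 - f(\sigma_c)$ whenever $\tau$ is not decided by $\alpha_2$ and $\tau \succ \sigma_c$ for the unique $c \in C^*$ with $\sigma_c \prec \tau$, and $f(\tau) = 0$ for all other undecided $\tau$. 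Then $g := \Phi(f)$ is an instance of $\RT^1_{\mathbb{N}}$, and since $f \succeq \alpha^*$ we have $g(x) \in C^*$ for all $x > n^*$; as $\mathrm{ran}(g)$ is finite, some $c^* \in C^*$ has $M^* := g^{-1}(\set{c^*})$ infinite, so $M^*$ is an $\RT^1_{\mathbb{N}}$-solution to $g$. Because $f \succeq \alpha_1$ and $E_{c^*}$ is an initial segment of $M^* = g^{-1}(\set{c^*})$, the usual use conventions give $\sigma_{c^*} \in \Psi(f, E_{c^*}) \subseteq \Psi(f, M^*)$. But $\Psi(f, M^*) \cong 2^{<\omega}$, hence contains some $\tau \succ \sigma_{c^*}$ not decided by $\alpha_2$, and $f(\tau) = 1 - f(\sigma_{c^*}) \neq f(\sigma_{c^*})$; so $\Psi(f, M^*)$ is not monochromatic for $f$, contradicting that it is a $\TT^1_2$-solution to $f$.

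The main obstacle is securing the condition $\alpha^*$ and the finite color set $C^*$ with the two forcing properties of the second paragraph. This is exactly where the extra slack of $\RT^1_{\mathbb{N}}$ over $\RT^1_j$ — arbitrarily many colors, with no advertised bound — is simultaneously what makes the diagonalization nontrivial (we cannot predict which $c^* \in C^*$ will survive in the final $g$, so we must prepare $E_c$ and $\sigma_c$ for all of $C^*$) and what makes it go through (the $\Pi^0_1$ tail condition $\check g(x) \in C^*$ persists under the final modification of $f$, guaranteeing such a $c^*$ exists). Everything else is a routine adaptation of the corresponding parts of \Cref{1TTnuredRT1}, together with the bookkeeping needed to check that modifying $f$ above the incomparable strings $\sigma_c$ preserves the relevant $\Sigma^0_1$ and $\Pi^0_1$ facts.
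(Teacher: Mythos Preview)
Your proof is correct and follows essentially the same approach as the paper's: both force a finite bound on $\ran(\check g)$ (you via an explicit Baire category argument, the paper via a one-line genericity observation yielding a condition $\alpha_0$ forcing $\check g$ to be a $j$-coloring), then replay the combinatorics of \Cref{1TTnuredRT1} below that condition. Your presentation is somewhat more elaborate---you additionally refine $C^*$ so that every color in it is dense and pass through an auxiliary generic $f^*$ to locate the finite sets $E_c$---but this extra work is harmless and the core diagonalization is identical to the paper's.
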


\begin{proof}
	Again, the proof is based on that of \Cref{1TTnuredRT1}, but with some small differences. As there, we can take $k = 2$, and then argue by contradiction. So, asume that $\TT^1_2 \ured \RT^1_{\mathbb{N}}$, via $\Phi$ and $\Psi$. Consider Cohen forcing in $2^{<\omega}$, let $\check{f}$ be as in \Cref{1TTnuredRT1}, and let $\check{g}$ be a name in the forcing language for $\Phi(\check{f})$. Since a generic here is an instance of $\TT^1_2$, which is in turn mapped by $\Phi$ to an instance of $\RT^1_{\mathbb{N}}$, it follows that we can fix a $j \geq 1$ and a condition $\alpha_0$ forcing that $\check{g}$ is a $j$-coloring. This can be expressed as a $\Pi^0_1$ property, hence it will be preserved by any $2$-coloring of $2^{<\omega}$ that extends $\alpha_0$, generic or not. We can now proceed basically just like in \Cref{1TTnuredRT1}, but working below $\alpha_0$. More precisely, we find a condition $\alpha \geq \alpha_0$ along with a number $n_0$, a non-empty set $C \subseteq j$ and, for each $c \in C$, a finite set $E_c$, such that $\alpha$ forces each of the following:
	\begin{itemize}
		\item $E_c$ is monochromatic for $\check{g}$ with color $c$;
		\item $\Psi(\check{f},E_c)(\sigma) \converges = 1$ for $j$ many pairwise incomparable strings $\sigma$;
		\item $\check{g}(x) \in C$ for all $x > n_0$.
	\end{itemize}
	The rest of the construction of a $2$-coloring $f \succeq \alpha$ of $2^{<\omega}$ is now just as before.
\end{proof}

\Cref{thm:TT1k_not_to_RT1+,thm:TT1k_not_to_RT1N} raise two natural questions. The first is what happens between $\fopart \TT^1_k$ and $\RT^1_{\mathbb{N}}$, and the second is what happens between $\fopart \TT^1_k$ and $\TT^1_k$. These questions will be answered in the next section, in \Cref{cor:TT1N_equiv_RT1N,TT12_not_FO}.

We conclude by looking at the relationship of $\TT^1_k$ and $\D^2_k$, a close relative of $\RT^1_k$. Recall that $\D^2_k$ is the problem whose instances are all $\mathbf{\Delta}^0_2$ colorings $f : \omega \to k$, with the solution to any such $f$ being all infinite set $M \subseteq \omega$ homogeneous for $f$. This problem, formulated as a statement of second-order arithmetic, was introduced by Cholak, Jockusch, and Slaman~\cite[Statement 7.8]{CJS-2001}, and has played an important role in the reverse mathematical analysis of Ramsey's theorem for pairs. (See~\cite[Sections 8]{DM-2022} for more of the history.)

Often, the instances of $\D^2_k$ are represented using Schoenfield's limit lemma, as colorings $f : \omega^2 \to k$ with the property that $\lim_y f(x,y)$ exists for all $x$. Here, we think of them as pairs $(X,f)$ where $X$ is an arbitrary set and $f$ is an $X'$-computable instance of $\RT^1_k$. In jargon, $\D^2_k$ is $(\RT^1_k)'$, where $'$ is the Weihrauch jump operator (cf.~\cite[Definitions 6.8 and 6.9]{BGP-2021}). The following theorem may thus be compared against the previous one.

\begin{theorem}\label{thm:TT1k_not_D2k}
	For every $k \geq 1$, $\TT^1_k \ured \D^2_k$.	
\end{theorem}

\begin{proof}
	The result is trivial for $k = 1$, so we may assume $k \geq 2$. Fix an instance $f : 2^{<\omega} \to k$ of $\TT^1_k$. For each $n \in \omega$ and each $c < k-1$ we define a string $\rho_{c,n} \in 2^{<\omega}$ by induction on $c$. Let $\rho_{-1,n} = \emptystring$ for definiteness, and assume that $\rho_{c-1,n}$ has been defined for some $c < k-1$. Then, let $\rho_{c,n}$ be the least extension of $\rho_{c-1,n}$ (in some fixed enumeration of $2^{<\omega}$)  such that $f(\sigma) \neq c$ for all $\sigma \succeq \rho_{c,n}$ with $|\sigma| < n$. Notice that $\rho_{c,n}$ is always defined, and that it is uniformly computable from $c$ and $n$. If $\lim_n \rho_{c,n}$ exists, we denote the limit by $\rho_c$.
	
	We next define a coloring $g : \omega \to k$. Given $n \in \omega$, choose the largest $c < k$ such that $\rho_{d,n} = \rho_{d,m}$ for all $d < c$ and all $m \geq n$. (There is such a $c$ because, in particular, $\rho_{-1,n} = \rho_{-1,m}$ for all~$m \geq n$.) Then, let $g(n) = c$. Observe that $g$ is uniformly computable from $f'$, and so it is an instance of $\D^2_k$.
	
	Fix the largest $c < k$ such that $\rho_d$ is defined for all $d < c$. (Again, there is such a $c$ because $\rho_{-1}$ is defined.) Then by definition of $g$, for all sufficiently large $n$ we have that $g(n) = c$. Moreover, the set $\set{\sigma : f(\sigma) = c}$ must be dense above $\rho_{c-1}$. Suppose not. Then we may fix the least $\rho \succeq \rho_{c-1}$ such that $f(\sigma) \neq c$ for all $\sigma \succeq \rho$. Since $\rho_{-1,n} \preceq \cdots \preceq \rho_{c-1,n}$ for all $n$, we have $\rho_{-1} \preceq \cdots \preceq \rho_{c-1}$. Hence, by construction, $f(\sigma) > c$ for all $\sigma \succeq \rho$. It follows that $c < k-1$ and that $\rho_{c,n} = \rho$ for all sufficiently large $n$. But then $\rho_c$ is defined, a contradiction. 
	
	To complete the proof, let $M$ be any infinite monochromatic set for $g$, and let $n = \min M$. By the argument above, the color of $M$ under $g$ must be $c$, so also $g(n) = c$. By definition of $g$, this means that $\rho_{c-1,n} = \rho_{c-1,m}$ for all $m \geq n$, hence $\rho_{c-1,n} = \rho_{c-1}$. Since $\set{\sigma : f(\sigma) = c}$ is dense above $\rho_{c-1}$, we can uniformly compute from $f \oplus M$ an $H \cong 2^{<\omega}$ monochromatic for $f$ with color $c$ and root extending~$\rho_{c-1}$.
\end{proof}

\section{Unbounded number of colors}\label{sec:rakes}

We now turn to studying the problem $\TT^1_{\mathbb{N}}$. As mentioned in the introduction, this is arguably the formulation closest to the principle $\TT^1$ as it is studied in computability theory and reverse mathematics. Our main theorem here is \Cref{TT1N_theorem_main}, that $\fopart \TT^1_{\mathbb{N}} \ured \RT^1_{\mathbb{N}}$. The proof requires some new combinatorial machinery which we first carefully develop.

\begin{definition}
	Fix a coloring $f$ of $2^{<\omega}$, $k \geq 1$, and $C = \set{c_0 < \cdots < c_{k-1}} \subseteq \omega$.
	\begin{enumerate}
		\item A \emph{$C$-rake for $f$} is a set $R \subseteq 2^{<\omega}$ with the following properties.
		\begin{itemize}
			\item $R$ is rooted.
			\item $R$ is symmetric.
			\item If $\rank_R(\sigma) \equiv i \mod k$ then $f(\sigma) = c_i$.
			\item If $\rank_R(\sigma) \not\equiv k-1 \mod k$ then $\sigma$ has exactly one successor in $R$.
			\item If $\rank_R(\sigma) \equiv k-1 \mod k$ then $\sigma$ has $0$ or $k+1$ many successors in $R$.
		\end{itemize}
		\item The \emph{block} of $\sigma \in R$ in $R$, denoted $\block_R(\sigma)$, is $\lfloor \frac{\rank_R(\sigma)}{k} \rfloor$.
		\item $R$ is \emph{good} if $f(\sigma) \in C$ for all~$\sigma \in 2^{<\omega}$ extending the root of $R$.
	\end{enumerate}
\end{definition}
\noindent See \Cref{fig:C-rake} for a typical example.

\begin{figure}[htbp]
  \centering
	\hspace{60pt}\begin{tikzpicture}[scale=0.8,font=\normalsize]
		\tikzset{
		empty node/.style={circle,inner sep=0,fill=none},
		solid node/.style={circle,draw,inner sep=1.5,fill=black},
		hollow node/.style={circle,draw,thick,inner sep=1.5,fill=white}
		}
		\node(0)[hollow node] at (0,0) {\tiny 0};
		\node(1)[hollow node] at (0,1) {\tiny 1};
		\node(2)[hollow node] at (0,2) {\tiny 2};
		
		\node(a0)[hollow node] at (-3,3) {\tiny 0};
		\node(b0)[hollow node] at (-1,3) {\tiny 0};
		\node(c0)[hollow node] at (1,3) {\tiny 0};
		\node(d0)[hollow node] at (3,3) {\tiny 0};
		
		\node(a1)[hollow node] at (-3,4) {\tiny 1};
		\node(b1)[hollow node] at (-1,4) {\tiny 1};
		\node(c1)[hollow node] at (1,4) {\tiny 1};
		\node(d1)[hollow node] at (3,4) {\tiny 1};
		
		\node(a2)[hollow node] at (-3,5) {\tiny 2};
		\node(b2)[hollow node] at (-1,5) {\tiny 2};
		\node(c2)[hollow node] at (1,5) {\tiny 2};
		\node(d2)[hollow node] at (3,5) {\tiny 2};
		
		\node(aa0)[hollow node] at (-3.75,6) {};
		\node(ab0)[hollow node] at (-3.25,6) {};
		\node(ac0)[hollow node] at (-2.75,6) {};
		\node(ad0)[hollow node] at (-2.25,6) {};
		
		\node(ba0)[hollow node] at (-1.75,6) {};
		\node(bb0)[hollow node] at (-1.25,6) {};
		\node(bc0)[hollow node] at (-0.75,6) {};
		\node(bd0)[hollow node] at (-0.25,6) {};
		
		\node(ca0)[hollow node] at (0.25,6) {};
		\node(cb0)[hollow node] at (0.75,6) {};
		\node(cc0)[hollow node] at (1.25,6) {};
		\node(cd0)[hollow node] at (1.75,6) {};
		
		\node(da0)[hollow node] at (2.25,6) {};
		\node(db0)[hollow node] at (2.75,6) {};
		\node(dc0)[hollow node] at (3.25,6) {};
		\node(dd0)[hollow node] at (3.75,6) {};
		
		\node(aa1)[hollow node] at (-3.75,7) {};
		\node(ab1)[hollow node] at (-3.25,7) {};
		\node(ac1)[hollow node] at (-2.75,7) {};
		\node(ad1)[hollow node] at (-2.25,7) {};
		
		\node(ba1)[hollow node] at (-1.75,7) {};
		\node(bb1)[hollow node] at (-1.25,7) {};
		\node(bc1)[hollow node] at (-0.75,7) {};
		\node(bd1)[hollow node] at (-0.25,7) {};
		
		\node(ca1)[hollow node] at (0.25,7) {};
		\node(cb1)[hollow node] at (0.75,7) {};
		\node(cc1)[hollow node] at (1.25,7) {};
		\node(cd1)[hollow node] at (1.75,7) {};
		
		\node(da1)[hollow node] at (2.25,7) {};
		\node(db1)[hollow node] at (2.75,7) {};
		\node(dc1)[hollow node] at (3.25,7) {};
		\node(dd1)[hollow node] at (3.75,7) {};
		
		\node(aa2)[hollow node] at (-3.75,8) {};
		\node(ab2)[hollow node] at (-3.25,8) {};
		\node(ac2)[hollow node] at (-2.75,8) {};
		\node(ad2)[hollow node] at (-2.25,8) {};
		
		\node(ba2)[hollow node] at (-1.75,8) {};
		\node(bb2)[hollow node] at (-1.25,8) {};
		\node(bc2)[hollow node] at (-0.75,8) {};
		\node(bd2)[hollow node] at (-0.25,8) {};
		
		\node(ca2)[hollow node] at (0.25,8) {};
		\node(cb2)[hollow node] at (0.75,8) {};
		\node(cc2)[hollow node] at (1.25,8) {};
		\node(cd2)[hollow node] at (1.75,8) {};
		
		\node(da2)[hollow node] at (2.25,8) {};
		\node(db2)[hollow node] at (2.75,8) {};
		\node(dc2)[hollow node] at (3.25,8) {};
		\node(dd2)[hollow node] at (3.75,8) {};
		
		\draw[-,thick] (0) to (1) to (2);
		
		\draw[-,thick] (2) to (a0);
		\draw[-,thick] (2) to (b0);
		\draw[-,thick] (2) to (c0);
		\draw[-,thick] (2) to (d0);
		
		\draw[-,thick] (a0) to (a1) to (a2);
		\draw[-,thick] (b0) to (b1) to (b2);
		\draw[-,thick] (c0) to (c1) to (c2);
		\draw[-,thick] (d0) to (d1) to (d2);
		
		\draw[-,thick] (a2) to (aa0);
		\draw[-,thick] (a2) to (ab0);
		\draw[-,thick] (a2) to (ac0);
		\draw[-,thick] (a2) to (ad0);
		
		\draw[-,thick] (b2) to (ba0);
		\draw[-,thick] (b2) to (bb0);
		\draw[-,thick] (b2) to (bc0);
		\draw[-,thick] (b2) to (bd0);
		
		\draw[-,thick] (c2) to (ca0);
		\draw[-,thick] (c2) to (cb0);
		\draw[-,thick] (c2) to (cc0);
		\draw[-,thick] (c2) to (cd0);
		
		\draw[-,thick] (d2) to (da0);
		\draw[-,thick] (d2) to (db0);
		\draw[-,thick] (d2) to (dc0);
		\draw[-,thick] (d2) to (dd0);
		
		\draw[-,thick] (aa0) to (aa1) to (aa2);
		\draw[-,thick] (ab0) to (ab1) to (ab2);
		\draw[-,thick] (ac0) to (ac1) to (ac2);
		\draw[-,thick] (ad0) to (ad1) to (ad2);
		
		\draw[-,thick] (ba0) to (ba1) to (ba2);
		\draw[-,thick] (bb0) to (bb1) to (bb2);
		\draw[-,thick] (bc0) to (bc1) to (bc2);
		\draw[-,thick] (bd0) to (bd1) to (bd2);
		
		\draw[-,thick] (ca0) to (ca1) to (ca2);
		\draw[-,thick] (cb0) to (cb1) to (cb2);
		\draw[-,thick] (cc0) to (cc1) to (cc2);
		\draw[-,thick] (cd0) to (cd1) to (cd2);
		
		\draw[-,thick] (da0) to (da1) to (da2);
		\draw[-,thick] (db0) to (db1) to (db2);
		\draw[-,thick] (dc0) to (dc1) to (dc2);
		\draw[-,thick] (dd0) to (dd1) to (dd2);
		
		\draw [thick,decorate,decoration={calligraphic brace,amplitude=5pt,mirror,raise=110pt},yshift=0pt]
(0,0) to (0,2) node [black,midway,xshift=165pt,yshift=28pt] {\footnotesize
block $0$};

		\draw [thick,decorate,decoration={calligraphic brace,amplitude=5pt,mirror,raise=110pt},yshift=0pt]
(0,3) to (0,5) node [black,midway,xshift=165pt,yshift=114pt] {\footnotesize
block $1$};

		\draw [thick,decorate,decoration={calligraphic brace,amplitude=5pt,mirror,raise=110pt},yshift=0pt]
(0,6) to (0,8) node [black,midway,xshift=165pt,yshift=200pt] {\footnotesize
block $3$};
		
	\end{tikzpicture}
  	\caption{An illustration of a $C$-rake with $C = \set{0,1,2}$ of height $9$. The color under $f$ of the nodes in blocks $0$ and $1$ is indicated.}
  	\label{fig:C-rake}
\end{figure}
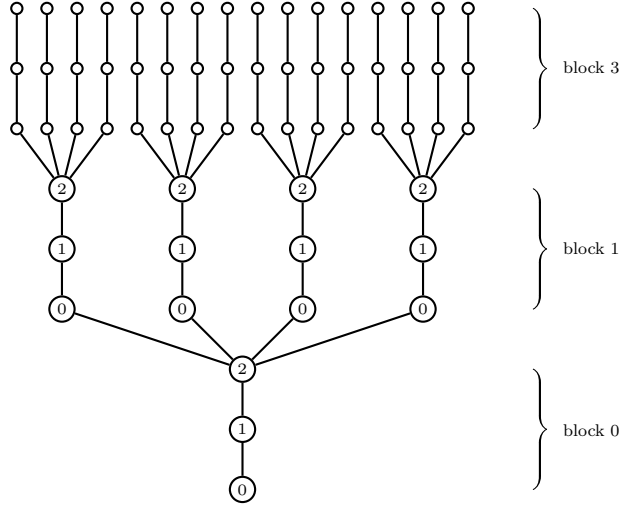

\begin{lemma}\label{exists_dense_set_of_colors}
	Fix a computable coloring $f$ of\, $2^{< \omega}$, a non-empty finite set $C \subseteq \omega$, and a string $\sigma \in 2^{<\omega}$. There is a set $W_{f,C,\sigma} \subseteq C$ such that for some $\tau \succeq \sigma$ the following hold:
	\begin{enumerate}
		\item $W_{f,C,\sigma}$ is uniformly $\Sigma^0_2$ in $\sigma$, a $\Delta^0_1$ index for $f$, and a canonical index for~$C$.
		\item $\set{\rho \in 2^{<\omega} : f(\rho) = c}$ is dense above~$\tau$ for each $c \in C \setminus W_{f,C,\sigma}$.
		\item $f(\rho) \notin W_{f,C,\sigma}$ for all $\rho \succeq \tau$.
	\end{enumerate}
\end{lemma}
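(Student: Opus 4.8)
The plan is to run the iterated ``dense-or-cone'' argument sketched in \Cref{sec:intro}, but to organize the bookkeeping so that the set of discarded colors admits a $\Sigma^0_2$ definition, rather than the $\Delta^0_3$ one that the naive approach produces. For $c \in \omega$ and $\tau \in 2^{<\omega}$, say that $c$ is \emph{absent above $\tau$} if $f(\rho) \neq c$ for all $\rho \succeq \tau$. Two elementary observations drive everything: absence above $\tau$ is a $\Pi^0_1$ property of $\tau$, uniformly in $c$ and a $\Delta^0_1$ index for $f$, and it is inherited by every $\tau' \succeq \tau$; and $\set{\rho : f(\rho) = c}$ fails to be dense above $\tau$ exactly when $c$ is absent above some $\tau' \succeq \tau$. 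Fix once and for all a computable linear order $\vartriangleleft$ of order type $\omega$ on $C \times 2^{<\omega}$, so that each pair has only finitely many $\vartriangleleft$-predecessors.

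Define a finite sequence of \emph{moves} by recursion. Set $\tau_0 = \sigma$. Given $\tau_i$ and the colors $d_0,\ldots,d_{i-1} \in C$ discarded so far, let $(d_i,\tau_{i+1})$ be the $\vartriangleleft$-least pair $(d,\tau')$ such that $d \in C \setminus \set{d_0,\ldots,d_{i-1}}$, $\tau' \succeq \tau_i$, and $d$ is absent above $\tau'$, if any such pair exists; otherwise stop. Each move discards a fresh element of $C$, so the process halts after some $m \leq |C|$ moves, and since the choices are deterministic, for each $\ell \leq m$ there is exactly one tuple $(\tau_0,\ldots,\tau_\ell;d_0,\ldots,d_{\ell-1})$ arising as its first $\ell$ moves; call these the \emph{partial runs}, noting that they are linearly ordered by extension. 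Put $\tau = \tau_m$ and $W_{f,C,\sigma} = \set{d_0,\ldots,d_{m-1}}$.

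I then check that $\tau$ and $W_{f,C,\sigma}$ are as required. Since partial runs form a chain, $c \in W_{f,C,\sigma}$ if and only if $c$ occurs among the discarded colors of \emph{some} partial run; and ``$(\tau_0,\ldots,\tau_\ell;d_0,\ldots,d_{\ell-1})$ is a partial run'' unwinds, clause by clause for $i < \ell$, into ``$d_i$ is absent above $\tau_{i+1}$ and no $\vartriangleleft$-smaller qualifying pair has its color absent above its node'', which is a conjunction of a $\Pi^0_1$ and a $\Sigma^0_1$ formula, hence $\Sigma^0_2$. Taking a finite conjunction over $i$ and then existentially quantifying over the boundedly many partial runs keeps the whole statement $\Sigma^0_2$, and the description is uniform in $\sigma$, a $\Delta^0_1$ index for $f$, and a canonical index for $C$, giving~(1). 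For~(3), each $d_i \in W_{f,C,\sigma}$ is absent above $\tau_{i+1}$, hence above $\tau = \tau_m \succeq \tau_{i+1}$, so $f(\rho) \notin W_{f,C,\sigma}$ for all $\rho \succeq \tau$. For~(2), if some $c \in C \setminus W_{f,C,\sigma}$ had $\set{\rho : f(\rho) = c}$ not dense above $\tau$, then $c$ would be absent above some $\tau' \succeq \tau = \tau_m$, so $(c,\tau')$ would supply a further move, contradicting that the process stopped at stage $m$; hence $c$ is dense above $\tau$.

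The one genuinely delicate point is~(1). Formulating the dense-or-cone iteration in the obvious way — ``is the current color dense above the current node?'' — makes the construction branch on $\Pi^0_2$ questions, which would only bound $W_{f,C,\sigma}$ by $\Delta^0_3$. The fix is to replace each such question by the $\Sigma^0_1$-over-$\Sigma^0_1$ \emph{search} for an absence witness and to make every choice canonical (the $\vartriangleleft$-least one), so that partial runs become a chain; membership in the final discarded set then becomes a purely existential assertion over partial runs, with no need to certify that the process has actually terminated. This is precisely what pins the complexity at $\Sigma^0_2$, and it is the step I expect to require the most care to state cleanly.
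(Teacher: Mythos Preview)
Your argument is correct and follows essentially the same iterated dense-or-cone strategy as the paper: repeatedly find a color that is absent above some extension of the current node, discard it, and move to that extension. The only real difference is how the $\Sigma^0_2$ bound is verified. The paper packages the construction as a $\emptyset'$-enumeration (at each stage $s$, ask $\emptyset'$ whether some $\tau' < s$ witnesses absence of a not-yet-discarded color, and if so enumerate those colors), so that $W_{f,C,\sigma}$ is $\emptyset'$-c.e.\ by inspection. You instead unwind the definition syntactically, exploiting that absence is $\Pi^0_1$, non-absence is $\Sigma^0_1$, and the $\vartriangleleft$-minimality clause ranges over a finite set, to read off the $\Sigma^0_2$ complexity directly. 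Both are fine; the paper's version is shorter, while yours makes the quantifier structure explicit. One small phrasing issue: when you write ``existentially quantifying over the boundedly many partial runs'', the bound is on the \emph{length} $\ell \leq |C|$ of a run, not on the set of candidate tuples, which is infinite; what you are really using is that $\Sigma^0_2$ is closed under existential number quantification.
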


\begin{proof}
	We use a $\emptyset'$ oracle to enumerate $W_{f,C,\sigma}$, letting $W_{f,C,\sigma}[s]$ denote the set of elements enumerated by stage $s$. In parallel, we define $\tau$ by finitely many finite extensions of $\sigma$. At stage $0$, we set $\tau_0 = \sigma$. At stage $s > 0$, we assume inductively that we have already defined $\tau_{s-1}$. We now check if there exists $\tau' < s$ extending $\tau_{s-1}$ such that for some $c \in C \setminus W_{f,C,\sigma}[s]$ we have that $f(\rho) \neq c$ for all $\rho \succeq \tau'$. If so, we let $\tau_s$ be the least such $\tau'$, and we enumerate all corresponding $c$ into $W_{f,C,\sigma}$. Otherwise, we let $\tau_s = \tau_{s-1}$. This completes the construction. We let $\tau = \lim_s \tau_s$. It is then easy to see that $W_{f,C,\sigma}$ has the desired properties, as witnessed by $\tau$.
\end{proof}

Observe that if $f$, $C$, and $\sigma$ above additionally satisfy that $f(\rho) \in C$ for all $\rho \succeq \sigma$, then by part (3) of the lemma it follows that $C \setminus W_{C,f,\sigma} \neq \emptyset$. This is in particular the case if we are dealing with a good $C$-rake $R$ for $f$ of finite height, and where $\sigma$ is an extension of some leaf of $R$. We will use this fact repeatedly in the sequel.

\begin{lemma}\label{exists_infinite_rake}
	Fix a computable coloring $f$ of $2^{<\omega}$. There exists a non-empty finite set $C \subseteq \omega$ and a computable good $C$-rake $R$ for $f$ of height $\omega$.
\end{lemma}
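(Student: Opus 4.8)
The plan is to obtain the entire rake from a single application of \Cref{exists_dense_set_of_colors} followed by a deterministic, greedy recursion. The point is that a $C$-rake can be grown blockwise as long as \emph{every} color in $C$ is dense above one common reference node, and \Cref{exists_dense_set_of_colors} is exactly what produces such a $C$.

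First I would extract the color set and the reference node. Fix $n$ with $\ran(f) \subseteq \set{0,\ldots,n-1}$ and apply \Cref{exists_dense_set_of_colors} with $C_0 = \set{0,\ldots,n-1}$ and $\sigma = \emptystring$, obtaining $W = W_{f,C_0,\emptystring}$ together with an associated witness $\tau \succeq \emptystring$. Put $C = C_0 \setminus W$. Since $f(\rho) \in C_0$ for every $\rho$, part~(3) of the lemma gives $f(\rho) \in C$ for all $\rho \succeq \tau$, and in particular $C \neq \emptyset$ by the observation following the lemma; write $C = \set{c_0 < \cdots < c_{k-1}}$. By part~(2), for each $c \in C$ the set $\set{\rho : f(\rho) = c}$ is dense above $\tau$, hence dense above every $\rho \succeq \tau$. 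Note that $C$ and $\tau$ are now fixed finite data (possibly obtained non-computably), which is harmless for the rest of the argument.

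Next I would build $R$ above $\tau$ by a breadth-first recursion, processing all nodes of a given $R$-rank before any node of the next rank, and always taking least witnesses in the fixed enumeration of $2^{<\omega}$. The root is the least $\rho_0 \succeq \tau$ with $f(\rho_0) = c_0$, which exists by density. When a node $\sigma \in R$ with $\rank_R(\sigma) \equiv i \mod k$ is processed: if $i < k-1$, adjoin the least $\sigma' \succ \sigma$ with $f(\sigma') = c_{i+1}$ as the unique successor of $\sigma$ (no intermediate node is added); if $i = k-1$, fix $\ell$ with $2^\ell \geq k+1$, enumerate the $2^\ell$ strings $\lambda \succeq \sigma$ of length $|\sigma|+\ell$, for each such $\lambda$ let $\mu_\lambda \succeq \lambda$ be least with $f(\mu_\lambda) = c_0$ (density again), and adjoin the first $k+1$ of the strings $\mu_\lambda$ as the successors of $\sigma$ (these are pairwise incomparable because the $\lambda$ are). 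Every search halts by density, the construction is deterministic, and since $\rank_R(\rho) \leq |\rho|$ for all $\rho \in R$, membership of a length-$n$ string in $R$ is decided once all nodes of rank at most $n$ — of which there are only finitely many — have been processed; hence $R$ is computable.

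Finally I would verify the defining clauses: $R$ is rooted, with root $\rho_0$; it is symmetric because all nodes of a given $R$-rank are treated identically and hence acquire the same number of successors; the color condition and the two successor-count conditions hold by construction; and since $\rho_0 \succeq \tau$, every $\sigma$ extending the root satisfies $f(\sigma) \in C$, so $R$ is good. As every node of rank $\equiv k-1 \mod k$ receives $k+1$ successors, the recursion never terminates along any branch, so the ranks occurring in $R$ are unbounded and $\height(R) = \omega$. The only step requiring real thought is the first one — isolating a finite set of colors that are \emph{simultaneously} dense above a single node — which \Cref{exists_dense_set_of_colors} delivers; the recursion is then routine, the only mild bookkeeping being the rank residues modulo $k$ and the extraction of $k+1$ incomparable $c_0$-colored extensions at the branch nodes.
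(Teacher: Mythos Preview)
Your proposal is correct and follows essentially the same approach as the paper: apply \Cref{exists_dense_set_of_colors} once at $\emptystring$ with the full color set to obtain $C$ and a witness $\tau$ above which every color in $C$ is dense and no other color appears, then greedily build the rake above $\tau$. The paper compresses the entire recursive construction into the single phrase ``we can computably construct a $C$-rake $R$ for $f$ of height $\omega$,'' whereas you spell out the block-by-block recursion and the computability argument explicitly; your extra detail is accurate and the choice of $C_0 = \{0,\ldots,n-1\}$ in place of $\ran(f)$ is an inessential variant.
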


\begin{proof}
	Let $W_{f,\ran(f),\emptystring}$ be as given by \Cref{exists_dense_set_of_colors}, as witnessed by $\tau$. Define $C = \ran(f) \setminus W_{f,\ran(f),\emptystring}$. As noted above, $C$ is non-empty. By property (2), we can computably construct a $C$-rake $R$ for $f$ of height $\omega$ with root $\tau$. By property (3), $R$ is good.
\end{proof}

We now give a definition of a useful way to ``pick out'' monochromatic subtrees from rakes.

\begin{definition}
	Fix a coloring $f$ of $2^{<\omega}$, a non-empty finite set $C \subseteq \omega$, a $C$-rake $R$ for $f$ (of finite or infinite height), and a set $S \subseteq 2^{<\omega}$. We write $S \trianglelefteq R$ if the following properties hold.
	\begin{enumerate}
		\item $S \subseteq R$.
		\item $S \cong 2^{< n}$ for some $n \leq \height(R)$. (Note that we may have $n = \height(R) = \omega$.)
		\item $\rank_S(\sigma) = \block_R(\sigma)$ for all $\sigma \in S$.
		\item $\rank_R(\sigma) \equiv \rank_R(\tau) \mod |C|$ for all $\sigma,\tau \in S$.
		%\item For all $\sigma,\tau \in S$, $\rank_S(\sigma) = \rank_S(\tau)$ if and only if $\rank_R(\sigma) = \rank_R(\tau)$.
	\end{enumerate}
\end{definition}

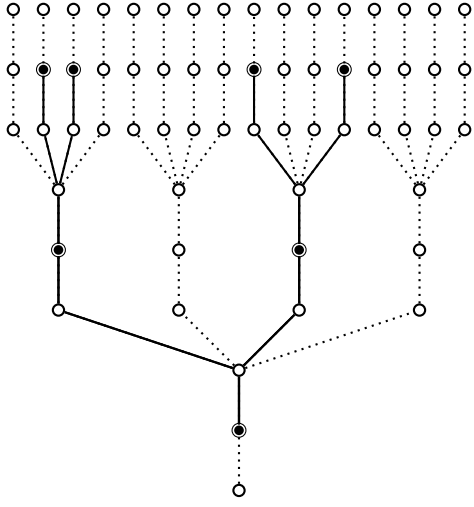
\begin{figure}[htbp]
  \centering
	\begin{tikzpicture}[scale=0.8,font=\normalsize]
		\tikzset{
		empty node/.style={circle,inner sep=0,fill=none},
		solid node/.style={circle,draw,inner sep=1.5,fill=black},
		hollow node/.style={circle,draw,thick,inner sep=1.5,fill=white}
		}
		\node(0)[hollow node] at (0,0) {};
		\node(1)[solid node,double] at (0,1) {};
		\node(2)[hollow node] at (0,2) {};
		
		\node(a0)[hollow node] at (-3,3) {};
		\node(b0)[hollow node] at (-1,3) {};
		\node(c0)[hollow node] at (1,3) {};
		\node(d0)[hollow node] at (3,3) {};
		
		\node(a1)[solid node,double] at (-3,4) {};
		\node(b1)[hollow node] at (-1,4) {};
		\node(c1)[solid node,double] at (1,4) {};
		\node(d1)[hollow node] at (3,4) {};
		
		\node(a2)[hollow node] at (-3,5) {};
		\node(b2)[hollow node] at (-1,5) {};
		\node(c2)[hollow node] at (1,5) {};
		\node(d2)[hollow node] at (3,5) {};
		
		\node(aa0)[hollow node] at (-3.75,6) {};
		\node(ab0)[hollow node] at (-3.25,6) {};
		\node(ac0)[hollow node] at (-2.75,6) {};
		\node(ad0)[hollow node] at (-2.25,6) {};
		
		\node(ba0)[hollow node] at (-1.75,6) {};
		\node(bb0)[hollow node] at (-1.25,6) {};
		\node(bc0)[hollow node] at (-0.75,6) {};
		\node(bd0)[hollow node] at (-0.25,6) {};
		
		\node(ca0)[hollow node] at (0.25,6) {};
		\node(cb0)[hollow node] at (0.75,6) {};
		\node(cc0)[hollow node] at (1.25,6) {};
		\node(cd0)[hollow node] at (1.75,6) {};
		
		\node(da0)[hollow node] at (2.25,6) {};
		\node(db0)[hollow node] at (2.75,6) {};
		\node(dc0)[hollow node] at (3.25,6) {};
		\node(dd0)[hollow node] at (3.75,6) {};
		
		\node(aa1)[hollow node] at (-3.75,7) {};
		\node(ab1)[solid node,double] at (-3.25,7) {};
		\node(ac1)[solid node,double] at (-2.75,7) {};
		\node(ad1)[hollow node] at (-2.25,7) {};
		
		\node(ba1)[hollow node] at (-1.75,7) {};
		\node(bb1)[hollow node] at (-1.25,7) {};
		\node(bc1)[hollow node] at (-0.75,7) {};
		\node(bd1)[hollow node] at (-0.25,7) {};
		
		\node(ca1)[solid node,double] at (0.25,7) {};
		\node(cb1)[hollow node] at (0.75,7) {};
		\node(cc1)[hollow node] at (1.25,7) {};
		\node(cd1)[solid node,double] at (1.75,7) {};
		
		\node(da1)[hollow node] at (2.25,7) {};
		\node(db1)[hollow node] at (2.75,7) {};
		\node(dc1)[hollow node] at (3.25,7) {};
		\node(dd1)[hollow node] at (3.75,7) {};
		
		\node(aa2)[hollow node] at (-3.75,8) {};
		\node(ab2)[hollow node] at (-3.25,8) {};
		\node(ac2)[hollow node] at (-2.75,8) {};
		\node(ad2)[hollow node] at (-2.25,8) {};
		
		\node(ba2)[hollow node] at (-1.75,8) {};
		\node(bb2)[hollow node] at (-1.25,8) {};
		\node(bc2)[hollow node] at (-0.75,8) {};
		\node(bd2)[hollow node] at (-0.25,8) {};
		
		\node(ca2)[hollow node] at (0.25,8) {};
		\node(cb2)[hollow node] at (0.75,8) {};
		\node(cc2)[hollow node] at (1.25,8) {};
		\node(cd2)[hollow node] at (1.75,8) {};
		
		\node(da2)[hollow node] at (2.25,8) {};
		\node(db2)[hollow node] at (2.75,8) {};
		\node(dc2)[hollow node] at (3.25,8) {};
		\node(dd2)[hollow node] at (3.75,8) {};
		
		\draw[-,thick,dotted] (0) to (1) to (2);
		
		\draw[-,thick,dotted] (2) to (a0);
		\draw[-,thick,dotted] (2) to (b0);
		\draw[-,thick,dotted] (2) to (c0);
		\draw[-,thick,dotted] (2) to (d0);
		
		\draw[-,thick,dotted] (a0) to (a1) to (a2);
		\draw[-,thick,dotted] (b0) to (b1) to (b2);
		\draw[-,thick,dotted] (c0) to (c1) to (c2);
		\draw[-,thick,dotted] (d0) to (d1) to (d2);
		
		\draw[-,thick,dotted] (a2) to (aa0);
		\draw[-,thick,dotted] (a2) to (ab0);
		\draw[-,thick,dotted] (a2) to (ac0);
		\draw[-,thick,dotted] (a2) to (ad0);
		
		\draw[-,thick,dotted] (b2) to (ba0);
		\draw[-,thick,dotted] (b2) to (bb0);
		\draw[-,thick,dotted] (b2) to (bc0);
		\draw[-,thick,dotted] (b2) to (bd0);
		
		\draw[-,thick,dotted] (c2) to (ca0);
		\draw[-,thick,dotted] (c2) to (cb0);
		\draw[-,thick,dotted] (c2) to (cc0);
		\draw[-,thick,dotted] (c2) to (cd0);
		
		\draw[-,thick,dotted] (d2) to (da0);
		\draw[-,thick,dotted] (d2) to (db0);
		\draw[-,thick,dotted] (d2) to (dc0);
		\draw[-,thick,dotted] (d2) to (dd0);
		
		\draw[-,thick,dotted] (aa0) to (aa1) to (aa2);
		\draw[-,thick,dotted] (ab0) to (ab1) to (ab2);
		\draw[-,thick,dotted] (ac0) to (ac1) to (ac2);
		\draw[-,thick,dotted] (ad0) to (ad1) to (ad2);
		
		\draw[-,thick,dotted] (ba0) to (ba1) to (ba2);
		\draw[-,thick,dotted] (bb0) to (bb1) to (bb2);
		\draw[-,thick,dotted] (bc0) to (bc1) to (bc2);
		\draw[-,thick,dotted] (bd0) to (bd1) to (bd2);
		
		\draw[-,thick,dotted] (ca0) to (ca1) to (ca2);
		\draw[-,thick,dotted] (cb0) to (cb1) to (cb2);
		\draw[-,thick,dotted] (cc0) to (cc1) to (cc2);
		\draw[-,thick,dotted] (cd0) to (cd1) to (cd2);
		
		\draw[-,thick,dotted] (da0) to (da1) to (da2);
		\draw[-,thick,dotted] (db0) to (db1) to (db2);
		\draw[-,thick,dotted] (dc0) to (dc1) to (dc2);
		\draw[-,thick,dotted] (dd0) to (dd1) to (dd2);
		
		\draw[-,thick] (1) to (2) to (a0) to (a1) to (a2) to (ab0) to (ab1);
		\draw[-,thick] (1) to (2) to (a0) to (a1) to (a2) to (ac0) to (ac1);
		\draw[-,thick] (1) to (2) to (c0) to (c1) to (c2) to (ca0) to (ca1);
		\draw[-,thick] (1) to (2) to (c0) to (c1) to (c2) to (cd0) to (cd1);
		
		\end{tikzpicture}
  	\caption{A $C$-rake with $|C|=3$ (represented by hollow nodes, connected by interrupted lines), and a set $S \trianglelefteq R$ of height $3$ (represented by solid nodes, connected by uninterrupted lines).}\label{fig:SinR}
\end{figure}

The definition ensures that $S$ is symmetric, that strings with the same rank in $S$ are in the same block of $R$, and that all elements of $S$ have the same rank in $R$ modulo $|C|$ and therefore have the same color under $f$. See \Cref{fig:SinR} for an illustration. Observe that if $R$ has finite height, and $S \trianglelefteq R$ for some $S$ of height $\height(R)/|C|$, then for every leaf $\sigma$ of $S$ there is a unique leaf $\lambda$ of $R$ such that $\sigma \preceq \lambda$.

We proceed with two technical lemmas that will round out the set of tools we need.

\begin{lemma}\label{exists_monochromatic_set_compatible}
	Fix a computable coloring $f$ of $2^{<\omega}$, a non-empty finite set $C \subseteq \omega$, and a good $C$-rake $R$ for $f$ of finite height. For each leaf $\lambda$ of $R$, fix $c_\lambda \in C \setminus W_{f,C,\lambda}$ (where $W_{f,C,\lambda}$ is as given by \Cref{exists_dense_set_of_colors}).
	%There exists $c \in \set{c_\lambda: \lambda \text{ a leaf of } R}$ such that the following hold:
	\begin{enumerate}
		\item There exists $c \in \set{c_\lambda: \lambda \text{ a leaf of } R}$ and an $H \cong 2^{<\omega}$ monochromatic for $f$ with color $c$ such that $H^{\rank < \height(R)/|C|} \trianglelefteq R$ and $c_\lambda = c$ for every leaf $\lambda$ of $R$ extending a leaf of $H^{\rank < \height(R)/|C|}$.

		\item There is a computable procedure, uniform in canonical indices for $C$, $R$, and $\set{c_\lambda: \lambda \text{ a leaf of } R}$, that outputs some $c$ as above and a canonical index for the initial segment $H^{\rank < \height(R)/|C|}$ for some $H$ as above.
	\end{enumerate}
\end{lemma}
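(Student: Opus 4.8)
The plan is to reduce the lemma to a finitary pigeonhole statement about complete $(|C|+1)$-ary trees, obtained by ``contracting'' the blocks of $R$. Write $k=|C|$ and $C=\set{c_0<\cdots<c_{k-1}}$. Since $R$ is rooted, symmetric, of finite height, and the only nodes of $R$ that can be leaves are those of rank $\equiv k-1 \mod k$, we have $\height(R)=mk$ with $m=\height(R)/|C|\geq 1$. Let $T$ be the \emph{block-contraction} of $R$: its elements are the $\sigma\in R$ with $\rank_R(\sigma)\equiv 0 \mod k$ (the ``block bottoms''), ordered by $\preceq$. Because $R$ is a $C$-rake, a block bottom at $R$-rank $jk$ with $j<m-1$ sits at the base of a unique chain of single successors up to $R$-rank $(j+1)k-1$, and the top of that chain has exactly $k+1$ successors (the block bottoms of the next block); hence $T$ is isomorphic to the complete $(k+1)$-ary tree of height $m$, the block bottom of $R$-rank $jk$ sitting at $T$-rank $j$. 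For a block bottom $v$ of $R$-rank $jk$ and $i<k$, let $v^{[i]}$ be the unique $\sigma\in R$ with $v\preceq\sigma$ and $\rank_R(\sigma)=jk+i$. Each leaf $v$ of $T$ extends to a unique leaf $\lambda(v)=v^{[k-1]}$ of $R$ (as in the observation following the definition of $\trianglelefteq$), and I define a $C$-coloring $\chi$ of the leaves of $T$ by $\chi(v)=c_{\lambda(v)}$.

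Next I would prove the combinatorial core: \emph{for every $k\geq 1$ and every $k$-coloring of the leaves of the complete $(k+1)$-ary tree of height $m$, there are a color $c$ and a complete binary subtree of height $m$ --- an isomorphic copy of $2^{<m}$ obtained by selecting two of the $k+1$ children at each node --- all of whose leaves have color $c$.} This is a straightforward induction on $m$: the case $m=1$ is trivial; for the inductive step, apply the inductive hypothesis to each of the $k+1$ principal subtrees below the root to obtain $k+1$ monochromatic binary subtrees of height $m-1$, note that since there are only $k$ colors two of them carry the same color $c$, and join that pair under the root. Crucially, this construction is a finite recursion, hence effective, which is what makes part~(2) go through.

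To prove~(1), apply this to $(T,\chi)$, getting a color $c=c_i$ and a complete binary subtree $B$ of $T$ of height $m$ whose leaves are all $\chi$-colored $c$. Set $S=\set{v^{[i]}:v\in B}$. One checks that $v\mapsto v^{[i]}$ is an order-isomorphism of $B$ onto $S$, so $S\cong 2^{<m}$; that $S\subseteq R$; that $\rank_S(v^{[i]})=\rank_B(v)=\rank_T(v)=\block_R(v^{[i]})$; and that every element of $S$ has $R$-rank $\equiv i\mod k$ --- so that $S\trianglelefteq R$, and $S$ is monochromatic for $f$ with color $c_i=c$. Now, for each leaf $\sigma=v^{[i]}$ of $S$, let $\lambda=\lambda(v)$ be the unique leaf of $R$ extending $\sigma$; then $c_\lambda=\chi(v)=c$, so $c_\lambda\in C\setminus W_{f,C,\lambda}$ and \Cref{exists_dense_set_of_colors} supplies $\tau_\lambda\succeq\lambda$ above which $\set{\rho:f(\rho)=c}$ is dense. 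By unbounded search one builds above $\tau_\lambda$ a copy $H_\sigma\cong 2^{<\omega}$ rooted at $\sigma$, every element of which other than $\sigma$ is $f$-colored $c$ (using density to find, above any node already placed, two incomparable color-$c$ extensions). Put $H=S\cup\bigcup_{\sigma}H_\sigma$, the union over leaves $\sigma$ of $S$. Then $H$ is monochromatic for $f$ with color $c$, $H^{\rank < \height(R)/|C|}=S\trianglelefteq R$, and $c_\lambda=c$ for every leaf $\lambda$ of $R$ extending a leaf of $S$; and a routine verification --- via the $\Pi^0_2$ description of $H\cong 2^{<\omega}$ recalled in \Cref{sec:dense-or-cone}, or directly from the fact that by construction every node of $H$ has exactly two successors in $H$ --- shows $H\cong 2^{<\omega}$. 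This gives~(1).

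For~(2), observe that $T$, the maps $v\mapsto v^{[i]}$ and $v\mapsto\lambda(v)$, the coloring $\chi$ (computed from $C$, $R$, and the recorded values $c_\lambda$), the effective combinatorial lemma, and hence $c$ and $S$, are all obtained uniformly computably from canonical indices for $C$, $R$, and $\set{c_\lambda:\lambda\text{ a leaf of }R}$; the procedure simply outputs $c$ together with a canonical index for $S=H^{\rank < \height(R)/|C|}$. The only non-computable ingredient, namely the sets $W_{f,C,\lambda}$ and the dense sets they witness, is needed only for the infinite part of $H$, which part~(2) does not return. I expect the main work to be the bookkeeping connecting $R$, its block-contraction $T$, and sets $S\trianglelefteq R$, together with the $(k+1)$-ary tree pigeonhole; the key point making everything fit is that a $C$-rake provides $|C|+1$ successors at each branching node --- exactly one more than $|C|$ --- which is precisely the slack needed to collapse the leaf colors to a single color simultaneously at every level.
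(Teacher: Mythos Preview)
Your proof is correct and follows essentially the same approach as the paper's. The paper carries out the pigeonhole argument directly on $R$ via a labeling $g:R\to C$ defined by reverse induction on rank (propagating $c_\lambda$ up through single-successor chains and applying pigeonhole at each $(k+1)$-way branching), then builds $S$ top-down by following $g$; your block-contraction to the $(k+1)$-ary tree $T$ and the separate combinatorial lemma are a clean repackaging of exactly this argument, and your lift $v\mapsto v^{[i]}$ is precisely the paper's selection of the node of $f$-color $c$ within each block.
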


\begin{proof}
	%The result is clear if $R$ has height $\omega$. So assume $R$ has finite height. For each leaf $\lambda$ of $R$, fix $H_\lambda \cong 2^{<\omega}$ monochromatic for $f$ with root $\rho_\lambda \succeq \lambda$. Since $R$ is good, the color of each $H_\lambda$ is an element of $C$.
	%
	Say $|C| = k$, so that $\height(R) = mk$ for some $m \geq 1$. We define a map $g : R \to C$ by reverse induction on rank. For each element of $R$ of maximum rank, which is to say, for each leaf $\lambda$, we let $g(\lambda) = c_\lambda$. Next, fix $\sigma \in R$ that is not a leaf and suppose we have defined $g$ on all elements of $R$ of larger rank. If $\sigma$ has a unique successor $\tau$ in $R$, let $g(\sigma) = g(\tau)$. Otherwise, since $R$ is a $C$-rake, $\sigma$ has $k+1$ many successors $\tau_0,\ldots,\tau_{k}$ in $R$. We can thus computably choose a $c \in C$ such that $g(\tau_i) = g(\tau_j) = c$ for some distinct $i,j \leq k$. Let $g(\sigma) = c$. This completes the definition. Notice that for all $\sigma \preceq \tau$ in $R$, if $\block_R(\sigma) = \block_R(\tau)$ then $g(\sigma) = g(\tau)$.
	
	Now fix the color $c \in C$ that $g$ assigns to the root of $R$. We define $h : 2^{< m} \to R$ by induction on finite strings. Let $h(\emptystring)$ be the shortest $\sigma \in R$ with $f(\sigma) = c$. Since $R$ is a $C$-rake, we have that $\block_R(h(\emptystring)) = 0$, and so also $g(h(\emptystring)) = c$, by construction of~$g$. Suppose next that we have defined $h(\alpha) \in R$ for some string $\alpha$ with $|\alpha| < m-1$. Assume inductively that $\block_R(h(\alpha)) = |\alpha|$ and that $f(h(\alpha)) = g(h(\alpha)) = c$. Thus $h(\alpha)$ is not a leaf of $R$, nor in the same block as a leaf, since $\block_R(\lambda) = m-1$ for all leaves $\lambda$ of $R$. By construction of $g$, $h(\alpha)$ has two extensions $\tau,\tau' \in R$ such that $\block_R(\tau) = \block_R(\tau') = \block_R(h(\alpha)) + 1$ and $g(\tau) = g(\tau') = c$, and we may assume that $\tau,\tau'$ are of minimal length with these properties (i.e., they are the shortest elements of the next block after that of $h(\alpha)$). Let $h(\alpha0)$ and $h(\alpha 1)$ be the shortest incomparable $\sigma \succeq \tau$ and $\sigma' \succeq \tau'$ in $R$ with $f(\sigma) = f(\sigma') = c$. As above, we have for each $i < 2$ that $\block_R(h(\alpha i)) =  \block_R(h(\alpha)) + 1 =|\alpha| + 1$ and $f(h(\alpha i)) = g(h(\alpha i)) = c$, so the inductive assumptions are maintained.
	
	Let $S = \ran(h)$. Then $S \cong 2^{< m}$ as witnessed by $h$, and $S$ is monochromatic for $f$ with color $c$. It follows from the construction that $S \trianglelefteq R$. Consider any leaf $\sigma$ of $S$, and let $\lambda$ be the unique leaf of $R$ extending $\sigma$. Since $\block_R(\sigma) = \block_R(\lambda)$, we have that $c = f(\sigma) = g(\sigma) = g(\lambda) = c_{\lambda}$. Moreover, since $c = c_{\lambda} \in C \setminus W_{C,f,\lambda}$, it follows by \Cref{exists_dense_set_of_colors}~(2) that $\set{\rho \in 2^{<\omega} : f(\rho) = c}$ is dense above some extension of $\lambda$.
	
	For each leaf $\lambda$ of $R$ extending a leaf of $S$ there therefore exists an $H_\lambda \cong 2^{<\omega}$ monochromatic for $f$ with color $c$ and with root  $\rho_\lambda \succeq \lambda$. Define
	\[
		H = S \cup \left( \bigcup_{\substack{\lambda \text{ a leaf of } R\\ \text{ extending a}\\\text{leaf of } S}} H_\lambda \setminus \set{\rho_{\lambda}} \right).
	\]
	Then $H \cong 2^{<\omega}$, $H$ is monochromatic for $f$ with color $c$, and $H^{\rank < \height(R)/|C|} = H^{\rank < m} = S$ satisfies the conclusion of part (1).
	
	Finally, note that $k$, $m$, and the function $g$ are all uniformly computable from canonical indices for $C$, $R$, and the set $\set{c_\lambda: \lambda \text{ a leaf of } R}$. Hence, so is the color assigned by $g$ to the root of $R$, which is $c$, and from here we can uniformly compute the function $h : 2^{< m} \to R$. Since $H^{\rank < \height(R)/|C|} = \ran(h)$, and $h$ is strictly increasing under $\preceq$, this completes the proof.
\end{proof}

\begin{lemma}\label{exists_infinite_rake2}
	Fix a computable coloring $f$ of $2^{<\omega}$ and a $\Sigma^0_1$ property $\varphi$ of finite subsets of $2^{<\omega}$ such that whenever $H \cong 2^{<\omega}$ is monochromatic for $f$ then $\varphi(H^{\rank <n})$ holds for some $n$. There exists a non-empty finite set $C \subseteq \omega$ and a good $C$-rake $R$ for $f$ of finite height such that for every $H \cong 2^{<\omega}$ monochromatic for $f$ with $H^{\rank < \height(R)/|C|} \trianglelefteq R$ we have that $\varphi(H^{\rank < \height(R)/|C|})$ holds.
\end{lemma}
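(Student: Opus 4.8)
The plan is to obtain $R$ as a finite truncation of the infinite good rake provided by \Cref{exists_infinite_rake}. So, fix a non-empty finite $C \subseteq \omega$ and a computable good $C$-rake $R^{*}$ for $f$ of height $\omega$, and write $k = |C|$. For each $m \geq 1$, let $R_m = (R^{*})^{\rank < mk}$, a good $C$-rake for $f$ of height $mk$, so that $\height(R_m)/|C| = m$. Let $\mathcal{S}_m$ be the finite set of all $S$ with $S \trianglelefteq R_m$ and $S \cong 2^{<m}$, and note that if $H \cong 2^{<\omega}$ is monochromatic for $f$ with $H^{\rank < m} \trianglelefteq R_m$ then $H^{\rank < m} \in \mathcal{S}_m$. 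Call $S \in \mathcal{S}_m$ \emph{bad} if $S = H^{\rank < m}$ for some monochromatic $H \cong 2^{<\omega}$ but $\neg\varphi(S)$, and let $E_m \subseteq \mathcal{S}_m$ be the set of bad members. Then $R_m$ witnesses the conclusion of the lemma exactly when $E_m = \emptyset$, so it suffices to find some $m \geq 1$ with $E_m = \emptyset$.

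Suppose not, so $E_m \neq \emptyset$ for every $m \geq 1$. The key point is that the truncation map $S \mapsto S^{\rank < m-1}$ sends $E_m$ into $E_{m-1}$ whenever $m \geq 2$: since $S^{\rank < m-1}$ is a downward-closed subset of $S$, it again lies in $\mathcal{S}_{m-1}$ (each clause defining $\trianglelefteq$ — being a subset of $R_{m-1}$, the right isomorphism type, ranks equal to blocks, and a common $R^{*}$-rank modulo $k$ — is inherited); extendibility is inherited because $H^{\rank < m} = S$ forces $H^{\rank < m-1} = S^{\rank < m-1}$; and $\neg\varphi(S^{\rank < m-1})$ follows from $\neg\varphi(S)$, because every element of $S \setminus S^{\rank < m-1}$ extends a leaf of $S^{\rank < m-1}$, so by monotonicity of $\Sigma^0_1$ properties $\varphi(S^{\rank < m-1})$ would imply $\varphi(S)$. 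Consequently the sets $E_m$, linked by these maps, form an infinite, finitely branching tree (each $E_m$ is finite and, by assumption, non-empty), and by K\"{o}nig's lemma there is a coherent sequence $S^{(1)}, S^{(2)}, \dots$ with $S^{(m)} \in E_m$ and $(S^{(m+1)})^{\rank < m} = S^{(m)}$ for all $m \geq 1$.

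Put $S_\infty = \bigcup_m S^{(m)}$. Each $S^{(m)}$ is a downward-closed subset of $S^{(m+1)}$ isomorphic to $2^{<m}$, so the union is increasing, $S_\infty \cong 2^{<\omega}$, and $S_\infty^{\rank < m} = S^{(m)}$ for each $m$. The relations $S^{(m)} \trianglelefteq R_m$ are pairwise compatible, so $S_\infty \trianglelefteq R^{*}$; in particular, all elements of $S_\infty$ have the same $R^{*}$-rank modulo $k$, say $r$, whence $f$ is constantly $c_r$ on $S_\infty$ and $S_\infty$ is monochromatic for $f$. But $\neg\varphi(S_\infty^{\rank < m}) = \neg\varphi(S^{(m)})$ for every $m$, since $S^{(m)} \in E_m$; thus $\varphi$ fails on every finite truncation of the monochromatic tree $S_\infty$, contradicting the hypothesis on $\varphi$. (The truncation with $m = 0$ is not an escape: $S_\infty^{\rank < 0} = \emptyset$, and under our computation-length conventions a $\Sigma^0_1$ property cannot hold of $\emptyset$, so the hypothesis in fact yields some $n \geq 1$ with $\varphi(S_\infty^{\rank < n})$.) This contradiction gives $E_m = \emptyset$ for some $m \geq 1$, and we take this $C$ together with $R = R_m$.

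Of the earlier results, the argument above needs only \Cref{exists_infinite_rake}; \Cref{exists_dense_set_of_colors} and \Cref{exists_monochromatic_set_compatible} play no role in this particular lemma. The routine content is checking that truncation is compatible with $\trianglelefteq$, with extendibility, and with the failure of $\varphi$, together with the bookkeeping in the K\"{o}nig's lemma step. The main conceptual point to keep in mind is that ``$S$ is extendible to a monochromatic $H$'' is $\Sigma^1_1$ rather than arithmetical, so the level $m$ we produce — and hence the finite rake $R$ — cannot in general be found effectively from $f$; but the lemma asserts only existence, so this is not an obstacle.
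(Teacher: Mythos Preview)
Your proof is correct and follows the same strategy as the paper's: truncate the infinite good rake from \Cref{exists_infinite_rake} at a finite level determined by a compactness argument. The only difference is in presentation --- the paper casts the compactness as emptiness of the $\Pi^0_1$ class of monochromatic $H \trianglelefteq R_0$ on which $\varphi$ fails at every level (so the underlying tree is computable), whereas you cast it as K\"{o}nig's lemma on the inverse system of bad sets $E_m$ (whose tree is not computable, since your notion of ``bad'' invokes the $\Sigma^1_1$ extendibility condition); but since the lemma asserts only existence, as you rightly observe, either phrasing suffices.
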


\begin{proof}
	Apply \Cref{exists_infinite_rake} to obtain a non-empty finite set $C \subseteq \omega$ and a computable good $C$-rake $R_0$ for $f$ of height $\omega$. Let $\mathcal{P}$ be the class of all $H \trianglelefteq R_0$ such that $H$ has height $\omega$ and is monochromatic for $f$. Notice that $\mathcal{P}$ is a $\Pi^0_1$ class. The only part of the definition for which this may not be immediately apparent is the stipulation that each $H \in \mathcal{P}$ has height $\omega$, and so in particular, that each element of $H$ has a pair of incomparable successors in $H$. But since we want $H \trianglelefteq R_0$, the latter condition can be expressed as
	\[
		(\forall \sigma \in H)(\exists \tau,\tau' \in H)[\sigma \preceq \tau,\tau' \wedge \tau \mid \tau' \wedge \block_{R_0}(\tau) = \block_{R_0}(\tau') = \block_{R_0}(\sigma)+ 1].
	\]
	And since the set of all elements in $R_0$ in any given block is finite, the above is $\Pi^0_1$.
	
	Since $R_0$ has height $\omega$, it follows that for each $c \in C$ there exists $H \in \mathcal{P}$  monochromatic for $f$ with color $c$, so in particular $\mathcal{P} \neq \emptyset$. Now, let $\mathcal{Q}$ be the class of all $H \in \mathcal{P}$ such that $\neg \varphi(H^{\rank < n})$ holds for all~$n$. This is again a $\Pi^0_1$ class, and by hypothesis, $\mathcal{Q} = \emptyset$. By compactness of Cantor space, we can fix an $m \in \omega$ such that for every $H \in \mathcal{P}$ there exists $n \leq m$ for which $\varphi(H^{\rank < n})$ holds. Let $R = R_0^{\rank < m|C|}$, a good $C$-rake of finite height. If $H \cong 2^{<\omega}$ is any set monochromatic for $f$ and satisfying $H^{\rank < m} \trianglelefteq R$, then there must be a $G \in \mathcal{P}$ so that $H^{\rank < m} = G^{\rank < m}$. Thus, $\varphi(H^{\rank < n})$ holds for some $n \leq m = \height(R)/|C|$, as was to be shown. By our use conventions (\Cref{sec:background}), this implies that $\varphi(H^{\rank < \height(R)/|C|})$ holds as well.
\end{proof}

We can now assemble the pieces to prove our main theorem.

\begin{theorem}\label{TT1N_theorem_main}
	$\fopart \TT^1_{\mathbb{N}} \ured \RT^1_{\mathbb{N}}$.
\end{theorem}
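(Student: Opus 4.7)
The plan is to establish the reduction by combining the combinatorial machinery just developed with a single call to $\RT^1_\mathbb{N}$ to extract $\Sigma^0_2$-level information. Given an instance $\seq{A,\Delta,\Gamma}$ of $\fopart\TT^1_\mathbb{N}$, set $f=\Delta(A)$. The $\Sigma^0_1$ (in $A$) property $\varphi(S)\equiv\Gamma(A,S)(0)\converges$ satisfies the hypothesis of \Cref{exists_infinite_rake2}, because the convention on oracle computations guarantees that convergence on any monochromatic $H\cong 2^{<\omega}$ is witnessed on some finite initial segment. Applying \Cref{exists_infinite_rake2} yields a finite non-empty $C$ and a good $C$-rake $R$ of finite height $m|C|$ such that $\Gamma(A,H^{\rank<m})(0)\converges$ for every monochromatic $H$ with $H^{\rank<m}\trianglelefteq R$. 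Combined with the non-emptiness of $C\setminus W_{f,C,\lambda}$ (the remark after \Cref{exists_dense_set_of_colors}) and \Cref{exists_monochromatic_set_compatible}, this produces a valid $\fopart\TT^1_\mathbb{N}$-solution once the triple $T^{*}=(C,R,\bar{c}^{*})$ with $\bar{c}^{*}\in\prod_{\lambda}(C\setminus W_{f,C,\lambda})$ is known.

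Since $T^{*}$ is not $A$-computable, I delegate its selection to the $\RT^1_\mathbb{N}$-call. Enumerate computably all candidate triples $T=(\tilde{C},\tilde{R},\tilde{c})$ consisting of a finite non-empty $\tilde{C}\subseteq\omega$, a finite rooted symmetric $\tilde{R}\subseteq 2^{<\omega}$ with the $\tilde{C}$-rake branching structure, and a function from leaves of $\tilde{R}$ to $\tilde{C}$. For each $T$, the construction of \Cref{exists_monochromatic_set_compatible} computably yields a set $S(T)$, well-defined regardless of whether $\tilde{c}$ is in the "good" product set. Define $g\colon\omega\to\omega$ by letting $g(s)$ be $\Gamma(A,S(T))(0)$ for the least $T$ (of index at most $s$) whose $\tilde{R}$ has not been observed to fail being a good $\tilde{C}$-rake for $f_{s}$, and for which $\Gamma(A,S(T))(0)$ has converged by stage $s$ (output a default if none). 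The target triple $T^{*}$, with $\bar{c}^{*}$ componentwise least in $\prod_{\lambda}(C\setminus W_{f,C,\lambda})$, is never refuted and is eventually selected, so the index of the selected $T$ is uniformly bounded, making $g$ range over a finite set and hence be a valid $\RT^1_\mathbb{N}$-instance. The reduction outputs $g(\min M)$ for the $\RT^1_\mathbb{N}$-monochromatic set $M$.

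The main obstacle is verifying that $g(\min M)$ is always a valid $\fopart\TT^1_\mathbb{N}$-solution. Since $g$ need not be convergent, a value attained at infinitely many stages could a priori come from a triple $T$ whose $\tilde{c}$ is bad (some component in $W_{f,\tilde{C},\lambda}$), giving a $\Gamma$-convergent $S(T)$ that does not extend to any $H\cong 2^{<\omega}$ monochromatic for $f$, so that the $\Gamma$-output is not of the form $\Gamma(A,H)(0)$ for any valid $H$. Ruling this out is the crucial step: I plan to refine the selection criterion in $g$ to incorporate additional finite-range evidence, tracking observed density witnesses below the leaves of $\tilde{R}$ at the color of $S(T)$, which eventually penalizes any bad triple at all but finitely many stages. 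With such a refinement, any value persistently selected must correspond to a triple whose color $c$ satisfies density above every relevant leaf, so the associated $S(T)$ extends to a monochromatic $H\cong 2^{<\omega}$ and the $\Gamma$-value agrees with $\Gamma(A,H)(0)$.
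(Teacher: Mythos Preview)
Your overall architecture is right and matches the paper's: invoke \Cref{exists_infinite_rake2} with $\varphi(X)\equiv\Gamma(A,X)(0)\converges$, then use a single $\RT^1_\mathbb{N}$-call to pin down the $\Sigma^0_2$ data needed to run \Cref{exists_monochromatic_set_compatible}. The gap is exactly where you flag it, and your proposed fix does not close it.

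The difficulty is that ``color $c$ fails to be dense above $\lambda$'' is a genuine $\Sigma^0_2$ condition, not $\Sigma^0_1$. Your refinement promises to ``eventually penalize any bad triple at all but finitely many stages'' by observing density witnesses, but a finite-stage density check cannot do this: at any stage $s$ you are asking whether every short $\sigma\succeq\lambda$ has a $c$-colored extension found so far, and a bad triple may pass such checks infinitely often while a good triple may fail them infinitely often (its witnesses simply appearing late). So you have given no mechanism guaranteeing simultaneously that (i) every bad triple is cofinitely penalized and (ii) some good triple is cofinitely \emph{un}penalized. Without (ii), the selected index can drift unboundedly and $g$ need not have finite range; without (i), the stable selection may be a bad triple whose $S(T)$ is not an initial segment of any monochromatic $H$. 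Your proposal also outputs $\Gamma(A,S(T))(0)$ directly rather than the triple, so even stable selection of a single $T$ does not by itself let you certify validity from the $\RT^1_\mathbb{N}$-answer.

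The paper avoids this problem by not trying to detect density at all. Instead it encodes the entire tuple $(C,R,c_{\lambda_0},\ldots,c_{\lambda_{n-1}})$ as the \emph{color} of the $\RT^1_\mathbb{N}$-instance $d$, and chooses each $c_{\lambda_i}$ at stage $t$ as the least element of $C_t\setminus V_t$, where $V_t$ is a stage-$t$ approximation to the finite $\Sigma^0_2$ set $W_{f,C,\lambda_i}$ from \Cref{exists_dense_set_of_colors}. Because each $W_{f,C,\lambda_i}$ is a \emph{finite} subset of the finite set $C$, one can massage the approximation so that it literally stabilizes, and then the chosen $c_{\lambda_i}$ stabilizes to an element of $C\setminus W_{f,C,\lambda_i}$. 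Together with the $\Delta^0_2$-approximation of $(C,R,s)$ itself, this makes $d$ eventually constant. Hence any infinite homogeneous set for $d$ has as its color the \emph{correct} data $(C,R,c_0,\ldots,c_{n-1})$ with each $c_i\in C\setminus W_{f,C,\lambda_i}$, and \Cref{exists_monochromatic_set_compatible} then yields an $S$ that provably extends to a monochromatic $H$. The missing idea in your sketch is precisely this use of the finiteness of the $W$-sets to force the approximation, and hence the $\RT^1_\mathbb{N}$-instance, to converge.
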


\begin{proof}
	Consider an arbitrary instance $\seq{A,\Delta,\Gamma}$ of $\fopart\TT^1_{\mathbb{N}}$. We have that $\Delta(A)$ is a coloring $f$ of $2^{<\omega}$, and if $H \cong 2^{<\omega}$ is any monochromatic set for $f$ then $\Gamma(A, H)(0) \converges$, and the output of this computation is a solution to $\seq{A,\Delta,\Gamma}$. We define an instance $d$ of $\RT^1_{\mathbb{N}}$, uniformly computable from $\seq{A,\Delta,\Gamma}$, such that if $M$ is any infinite homogeneous set for $d$ then $\seq{A,\Delta,\Gamma} \oplus M$ uniformly computes $\Gamma(A, H)(0)$ for some such $H$. 
	
	To begin, we claim that there exists a non-empty finite set $C \subseteq \omega$, a good $C$-rake $R$ for $f$ of finite height, and an $s \in \omega$ with the following properties. For each leaf $\lambda$ of $R$, and each choice of $c_\lambda \in C \setminus W_{f,C,\lambda}[s]$, we apply \Cref{exists_monochromatic_set_compatible}~(2), relativized to $A$. This specifies an $A$-computable procedure for determining a $c \in \set{c_\lambda: \lambda \text{ a leaf of } R}$ and a finite set $S \trianglelefteq R$ of height $\height(R)/|C|$ that is monochromatic for $f$ with color $c$ and such that $c_\lambda = c$ for every leaf $\lambda$ of $R$ extending a leaf of $S$. The properties of $C$, $R$, and $s$ are that this procedure halts, and that for the resulting $S$ we have that $\Gamma(A,S)(0) \converges$.
	
	Note that the procedure from the lemma need not halt for arbitrary $s$, since we may have $W_{f,C,\lambda}[s] \subset W_{f,C,\lambda}$ for some $\lambda$ and therefore it could be that some $c_\lambda$ actually belongs to $W_{f,C,\lambda}$. (Thus, the hypotheses of the lemma would not be met.) Likewise, nothing about the lemma guarantees that if the procedure halts and outputs $c$ and $S$ then necessarily $\Gamma(A,S)(0) \converges$.
	
	To prove the claim, note that if we take $\varphi(X)$ to be the $\Sigma^0_1(A)$ formula stating that $\Gamma(A,X)(0) \converges$, then $f$ and $\varphi$ are precisely as in the hypothesis of \Cref{exists_infinite_rake2}, relativized to~$A$. Thus, there exists a non-empty finite set $C \subseteq \omega$ and a good $C$-rake $R$ for $f$ of finite height such that for for every $H \cong 2^{<\omega}$ monochromatic for $f$ with $H^{\rank < \height(R)/|C|} \trianglelefteq R$ we have $\Gamma(A,H^{\rank < \height(R)/|C|}) \converges$.
	
	At the same time, by \Cref{exists_monochromatic_set_compatible}, for every choice of $c_\lambda \in C \setminus W_{f,C,\lambda}$ for $\lambda$ a leaf of $R$, there is a $c \in \set{c_\lambda: \lambda \text{ a leaf of } R}$ and an $H \cong 2^{<\omega}$ monochromatic for $f$ with color $c$ such that $H^{\rank < \height(R)/|C|} \trianglelefteq R$ and $c_\lambda = c$ for every leaf $\lambda$ of $R$ extending a leaf of $H \cong 2^{<\omega}$. Thus, in particular for this $H$, we have that $\Gamma(A,H^{\rank < \height(R)/|C|}) \converges$. But $c$ and $H^{\rank < \height(R)/|C|}$ are the number and finite set produced by the computable procedure in part (2) of that lemma. Hence, if $s$ is large enough so that $W_{f,C,\lambda}[s] = W_{f,C,\lambda}$ for all leaves $\lambda$ of $R$, then $C$, $R$, and $s$ are as desired. The claim is proved.
	
	The definition of $C$, $R$, and $s$ in the statement of the claim is uniformly $\Delta^0_2$ in~$\seq{A,\Delta,\Gamma}$. Hence, uniformly computably in $\seq{A,\Delta,\Gamma}$, we can approximate the least such $C$, $R$, and $s$ by some $\seq{C_t: t \in \omega}$, $\seq{R_t: t \in \omega}$, and $\seq{s_t: t \in \omega}$. That is, for each $t$, $C_t$ and $R_t$ are canonical indices of finite sets, $s_t$ is a number, and we have that $\lim_t C_t = C$, $\lim_t R_t = R$, and $\lim_t s_t = s$.  Of course, $R_t$ need not be good, or even a $C_t$-rake, other than in the limit. Let $n_t$ denote the number of leaves of $R_t$.
	
	By \Cref{exists_dense_set_of_colors}~(1), relativized to $A$, we have that for each $t \in \omega$ and $i < n_t$ the set $W_{f,C_t,\lambda_i}$ is $\Sigma^0_2(A)$ uniformly in $f$, $C_t$ and $\lambda_i$. This means $W_{f,C_t,\lambda_i}$ can be $A$-computably approximated by a sequence of finite sets $\seq{V_{t,i,u} : u \in \omega}$, uniformly in $C_t$ and $\lambda_i$. More precisely, $V_{t,i,u} \subseteq C_t$ for all $u$, and for each $c \in C_t$ we have that $c \in V_{t,i,u}$ for cofinitely many $u$ if and only if $c \in W_{f,C_t,\lambda_i}$. By removing the least element whose membership in $V_{t,i,u}$ has changed most recently (with respect to $u$), or the least element if none such exists, we may further assume that $C_t \setminus V_{t,i,u} \neq \emptyset$ for all $u$. (If $t$ happens to be large enough that $C_t = C$ and $R_t = R$, then $C_t \setminus W_{f,C_t,\lambda_i} \neq \emptyset$ since $R$ is a good $C$-rake for $f$. Then since $V_{t,i,u} \supseteq W_{f,C_t,\lambda_i}$ for all sufficiently large $u$, any such element removed will indeed belong to $C_t \setminus V_{t,i,u}$ for any such $u$.)
	
	We now construct a coloring $d : \omega \to \omega$. (We will argue below that this is actually an instance of $\RT^1_{\mathbb{N}}$.) Fix $t$; we define $d(t)$. First, compute $C_t$, $R_t$, and $s_t$, and let $\lambda_0,\ldots,\lambda_{n_t-1}$ be the leaves of $R_t$, enumerated in lexicographic order.
	%By \Cref{exists_dense_set_of_colors}~(1), relativized to $A$, we have that for each $i < n_t$ the set $W_{f,C_t,\lambda_i}$ is $\Sigma^0_2(A)$ uniformly in $f$, $C_t$ and $\lambda_t$. This means $W_{f,C_t,\lambda_i}$ can be $A$-computably approximated by a sequence of finite sets $\seq{V_u : u \in \omega}$, uniformly in $C_t$ and $\lambda_t$. More precisely, $V_u \subseteq C_t$ for all $u$, and for each $c \in C$ we have that $c \in V_u$ for cofinitely many $u$ if and only if $u \in W_{f,C_t,\lambda_i}$. By removing the least element whose membership in $V_u$ has changed most recently (with respect to $u$), or the least element if none such exists, we may further assume that $C_t \setminus V_u \neq \emptyset$ for all $u$. (If $t$ happens to be large enough that $C_t = C$ and $R_t = R$, then $C_t \setminus W_{f,C_t,\lambda_i} \neq \emptyset$ since $R$ is a good $C$-rake for $f$. So any such element removed will indeed belong to $C_t \setminus V_u$ for all large enough $u$.)
	Then, for each $i < n_t$, let $c_i$ be the least element of $C_t \setminus V_{t,i,t}$, which is non-empty as discussed above. Finally, we define $d(t) = \seq{C_t,R_t,c_0,\ldots,c_{n_t-1}}$. Clearly, $d$ is uniformly computable from $\seq{A,\Delta,\Gamma}$.
	
	Now fix $t_0$ so that $C_t = C$, $R_t = R$, and $s_t = s$ for all $t \geq t_0$. Let $n = n_{t_0}$, the number of leaves of $R$, and enumerate the leaves in lexicographic order as $\lambda_0,\ldots,\lambda_{n-1}$. For each $t \geq t_0$ and $i < n$, the approximations $\seq{V_{t,i,u} : u \in \omega}$ to $W_{f,C_t,\lambda_i} = W_{f,C,\lambda_i}$ now only depend on $i$ (and not $t$). Thus, we may fix $t_1 \geq t_0$ so that additionally, for every $i < n$ and every $t \geq t_1$, we have $V_{t,i,t} \supseteq W_{f,C,\lambda_i}$. By construction of $d$, we have that if $t \geq t_1$ then $d(t) = \seq{C,R,c_0,\ldots,c_{n-1}}$ for some $c_0,\ldots,c_{n-1}$ with $c_i \in C \setminus V_{t,i,t}$ for each $i < n$. The value of each $c_i$ may depend on $t$, but since $V_{t,i,t} \supseteq W_{f,C,\lambda_i}$ we have that $c_i \in C \setminus W_{f,C,\lambda_i}$. Hence, the range of $d$ is bounded, so $d$ is an instance of $\RT^1_{\mathbb{N}}$, as desired.
	
	Now consider any infinite homogeneous set $M \subseteq \omega$ for $d$. As noted above, the color of $M$ under $d$ must be $\seq{C,R,c_0,\ldots,c_{n-1}}$, where $c_i \in C \setminus W_{f,C,\lambda_i}$ for each $i < n$. By the choice of $C$ and $R$, we can apply the $A$-computable procedure of \Cref{exists_monochromatic_set_compatible} to find a $c \in \set{c_i : i < n}$ and a finite set $S \trianglelefteq R$ of height $\height(R)/|C|$ that is monochromatic for $f$ with color $c$ and such that $c_i = c$ for every $\lambda_i$ that extends a leaf of $S$, and $\Gamma(A,S)(0) \converges$.
	
	We now show that $\Gamma(A,S)(0)$ is a $\fopart \TT^1_{\mathbb{N}}$-solution to $\seq{A,\Delta,\Gamma}$. Since $c_i \in C \setminus W_{f,C,\lambda_i}$ for each $i < n$, it follows by \Cref{exists_dense_set_of_colors}~(2) that the set $\set{\rho \in 2^{<\omega} : f(\rho) = c_i}$ is dense above some extension of $\lambda_i$. In particular, this is true for the $\lambda_i$ extending the leaves of $S$, for which $c_i = c$. We can now argue as in the ending of the proof of \Cref{exists_monochromatic_set_compatible} to conclude that there is an $H \cong 2^{<\omega}$ monochromatic for $f$ with color $c$ and such that $H^{\rank < \height(R)/|C|} = S$. Hence, $\Gamma(A,H)(0)$ is a $\fopart \TT^1_{\mathbb{N}}$-solution to $\seq{A,\Delta,\Gamma}$, and by our use conventions, $\Gamma(A,H)(0) = \Gamma(A,H^{\rank < \height(R)/|C|})(0) = \Gamma(A,S)(0)$, as desired. This completes the proof.
\end{proof}

\begin{corollary}\label{cor:TT1N_equiv_RT1N}
	$\fopart \TT^1_{\mathbb{N}} \uequiv \RT^1_{\mathbb{N}}$.
\end{corollary}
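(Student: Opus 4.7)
The plan is to combine \Cref{TT1N_theorem_main}, which yields $\fopart\TT^1_{\mathbb{N}} \ured \RT^1_{\mathbb{N}}$, with the reverse reduction $\RT^1_{\mathbb{N}} \ured \fopart\TT^1_{\mathbb{N}}$ already flagged at the end of \Cref{sec:background}. Thus the only thing to verify is the latter reduction, and this is essentially the standard ``constant-on-levels'' encoding of an $\RT^1$-instance as a $\TT^1$-instance, packaged correctly into the first-order part.

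Concretely, given an $\RT^1_{\mathbb{N}}$-instance $g : \omega \to k$, I would set $A = g$ (under the fixed coding), and define two Turing functionals $\Delta$ and $\Gamma$. The functional $\Delta$ computes, from $A$, the $\TT^1_{\mathbb{N}}$-instance $f : 2^{<\omega} \to k$ given by $f(\sigma) = g(|\sigma|)$, which has the same bounded range as $g$. The functional $\Gamma$, on any oracle $(A,H)$ for which $H$ is non-empty, outputs $g(|\min H|)$ at argument $0$. Then for every $H \cong 2^{<\omega}$ monochromatic for $f$, we have $\Gamma(A,H)(0)\converges$, so $\seq{A,\Delta,\Gamma}$ is indeed a valid instance of $\fopart\TT^1_{\mathbb{N}}$.

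To recover an $\RT^1_{\mathbb{N}}$-solution from a $\fopart\TT^1_{\mathbb{N}}$-solution $c$ to $\seq{A,\Delta,\Gamma}$, note that by construction $c = g(|\min H|)$ for some $H$ monochromatic for $f$, so $g(|\sigma|) = c$ for every $\sigma \in H$. Hence $\set{|\sigma| : \sigma \in H}$ is an infinite subset of $g^{-1}(\set{c})$, so the set $M = \set{x \in \omega : g(x) = c\,}$ is an infinite monochromatic set for $g$ of color $c$. Since $M$ is uniformly computable from $g$ and $c$, the output functional of the reduction simply enumerates $M$. There is no genuine obstacle here: the construction of $\Delta$ and $\Gamma$ is transparent, and the correctness check amounts to verifying that passing from $f$ back to $g$ by taking lengths preserves monochromaticity, which is immediate from the definition of $f$.
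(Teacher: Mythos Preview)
Your proposal is correct and follows exactly the approach the paper intends: combine \Cref{TT1N_theorem_main} for one direction with the reduction $\RT^1_{\mathbb{N}} \ured \fopart\TT^1_{\mathbb{N}}$ already recorded at the end of \Cref{sec:background}. The paper leaves the latter implicit (it follows from $\RT^1_{\mathbb{N}} \ured \TT^1_{\mathbb{N}}$ together with the fact that $\RT^1_{\mathbb{N}}$ is itself first-order, so it lies below the maximum first-order problem under $\TT^1_{\mathbb{N}}$), while you spell it out directly via the constant-on-levels encoding; both are fine.
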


Since, trivially $\fopart \TT^1_k \ured \TT^1_{\mathbb{N}}$ for all $k \geq 2$, but $\TT^1_k \nured \RT^1_{\mathbb{N}}$ by \Cref{thm:TT1k_not_to_RT1N}, we immediately obtain the following.

\begin{corollary}\label{TT12_not_FO}
	For every $j,k \geq 2$, $\TT^1_k \nured \fopart \TT^1_j$. In particular, $\TT^1_k$ is not Weihrauch equivalent to any first-order problem.
\end{corollary}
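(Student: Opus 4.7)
The plan is to derive this corollary as a short consequence of \Cref{cor:TT1N_equiv_RT1N} together with \Cref{thm:TT1k_not_to_RT1N}, using essentially no new machinery. Indeed, the statement almost writes itself once one recognizes that $\fopart$ is monotone under $\ured$ and that $\fopart \TT^1_{\mathbb{N}}$ collapses to $\RT^1_{\mathbb{N}}$.

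For the first assertion, I would argue by contradiction. Fix $j,k \geq 2$ and suppose $\TT^1_k \ured \fopart \TT^1_j$. Since $\TT^1_j \ured \TT^1_{\mathbb{N}}$ trivially (a $j$-coloring is an instance of $\TT^1_{\mathbb{N}}$, and any monochromatic $H \cong 2^{<\omega}$ is a solution on both sides), monotonicity of the first-order part operator yields $\fopart \TT^1_j \ured \fopart \TT^1_{\mathbb{N}}$. Combining with \Cref{cor:TT1N_equiv_RT1N}, we obtain
\[
\TT^1_k \ured \fopart \TT^1_j \ured \fopart \TT^1_{\mathbb{N}} \uequiv \RT^1_{\mathbb{N}},
\]
contradicting \Cref{thm:TT1k_not_to_RT1N}. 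This is the main content of the corollary.

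For the ``in particular'' clause, suppose toward a contradiction that $\TT^1_k \uequiv \mathsf{P}$ for some first-order problem $\mathsf{P}$. By the characterization of $\fopart$ from \cite[Theorem 2.2]{DSY-TA} recalled in \Cref{sec:background}, $\fopart \TT^1_k$ is the $\ured$-maximum among first-order problems Weihrauch below $\TT^1_k$. Since $\mathsf{P} \ured \TT^1_k$ and $\mathsf{P}$ is first-order, this gives $\mathsf{P} \ured \fopart \TT^1_k$, and therefore $\TT^1_k \uequiv \mathsf{P} \ured \fopart \TT^1_k$, contradicting the first part applied with $j = k$.

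There is no real obstacle: all the combinatorial work has already been done in \Cref{TT1N_theorem_main,thm:TT1k_not_to_RT1N}, and the only points to check carefully are (i) that the first-order part operator is indeed monotone under $\ured$ (which is immediate from its definition, since a reduction $\mathsf{P} \ured \mathsf{Q}$ lifts to a reduction of triples $\seq{A,\Delta,\Gamma}$ by composing $\Delta$ and $\Gamma$ with the given functionals), and (ii) that the characterization of $\fopart \mathsf{P}$ as the maximum first-order problem below $\mathsf{P}$ is invoked correctly. Both are standard and already appear in the excerpt, so no further preparation is needed.
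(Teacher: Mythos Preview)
Your argument is correct and is exactly the approach the paper takes: combine $\fopart \TT^1_j \ured \fopart \TT^1_{\mathbb{N}} \uequiv \RT^1_{\mathbb{N}}$ (from \Cref{cor:TT1N_equiv_RT1N}) with $\TT^1_k \nured \RT^1_{\mathbb{N}}$ (\Cref{thm:TT1k_not_to_RT1N}) to get the first assertion, and then deduce the ``in particular'' clause from the maximality characterization of $\fopart$. The paper compresses this into a single sentence before the corollary, but your more explicit write-up matches it step for step.
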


\noindent Thus we also have the analogue, in the Weihrauch degrees, of the remark made at the end of \Cref{sec:intro} that $\TT^1$ is not equivalent to any $\Pi^1_1$ statement over $\RCA_0$.

\section{Questions}

We conclude with a few questions left over from our analysis, some concrete and some more open-ended.
%As mentioned in \Cref{sec:boundedcols}, we do not know the answer to the following.
%\begin{question}\label{Q1}
%	Is it the case that $\fopart \TT^1_2 \ured \RT^1_+$?	
%\end{question}
%Another
One line of inquiry concerns clarifying the relationship between the problems studied here and various weaker forms of the infinite pigeonhole principle. A notable example, ubiquitous in the literature, is closed choice on $k$-element sets, $\mathsf{C}_k$ (cf.~\cite[Section 7]{BGP-2021}.) Recently, Pauly \cite{Pauly-TA} has shown that $\mathsf{C}_3 \nured \TT^1_2$. By results of Brattka and Rakotoniaina~\cite[Proposition 7.4]{BR-2017} is known than $\mathsf{C}_3 \Ured \RT^1_3$ and that $\mathsf{C}_3 \nured \RT^1_2$. Pauly's theorem therefore improves our \Cref{RT1more_non_TT1less} for $k = 3$ and $j = 2$. Some other related results appear in recent work of Gill~\cite{Gill-TA}, but many basic questions remain open.

In \Cref{cor:TT1N_equiv_RT1N} we obtained an answer to the analogue of the longstanding question, discussed in the introduction, of whether the first-order part of $\TT^1$ is $\RT^1$. This relied on \Cref{TT1N_theorem_main}, and on the machinery of rakes used in its proof. This raises the following question.

\begin{question}\label{Q3}
	Can the combinatorics deployed in the proof of \Cref{TT1N_theorem_main} be adapted and applied to study the strength of $\TT^1$ in second-order arithmetic?
\end{question}

\noindent Recently, Chong, Li, Wang, and Yang~\cite[Theorem 4.1]{CLWY-2020} showed that $\TT^1$ is conservative over $\RCA_0 + \RT^1 + \mathsf{P}\Sigma^0_1$. Here, $\mathsf{P}\Sigma^0_1$ (cf.~\cite[Definition 4.4]{CLWY-2020}) is a certain arithmetical principle equivalent, by results of Kreuzer and Yokoyama~\cite[Theorem 1.3]{KY-2016}, to the totality of the Ackermann--P\'{e}ter function relativized to any strictly monotonic function. Whether the first-order part of $\TT^1$ is $\RT^1$ is thus equivalent to whether $\mathsf{P}\Sigma^0_1$ can be eliminated in this result. In connection with \Cref{Q3}, we can ask the following.

\begin{question}
	Does $\mathsf{P}\Sigma^0_1$ admit a natural form (or forms) as an instance-solution problem? How does it (or, do they) compare under Weihrauch reducibility with the various versions of $\TT^1$ studied in this article?
\end{question}

Lastly, we ask a general question, implicit in earlier papers like~\cite{DSY-TA} and~\cite{SV-2022}.

\begin{question}\label{Q4}
	What is the precise relationship between the first-order parts of problems in the context of Weihrauch reducibility, and the first-order parts of principles in the context of reverse mathematics?
\end{question}

\noindent Our results here add to the body of evidence showing that some sort of connection between the two notions exists. Whether \Cref{Q4} can be answered for a certain class of problems/principles, or whether the notion of first-order part under Weihrauch reducibility needs to be further refined to make some kind of answer possible, is unknown. In our view, clarifying this situation would yield important foundational insight, and further investigation is warranted.


\begin{thebibliography}{10}

\bibitem{BGP-2021}
Vasco Brattka, Guido Gherardi, and Arno Pauly.
\newblock Weihrauch complexity in computable analysis.
\newblock In {\em Handbook of computability and complexity in analysis}, Theory
  Appl. Comput., pages 367--417. Springer, Cham, [2021] \copyright 2021, \doi{10.1007/978-3-030-59234-9_11}.

\bibitem{BR-2017}
Vasco Brattka and Tahina Rakotoniaina.
\newblock On the uniform computational content of {R}amsey's theorem.
\newblock {\em The Journal of Symbolic Logic}, 82(4):1278--1316, 2017, \doi{10.1017/jsl.2017.43}.

\bibitem{CJS-2001}
Peter~A. Cholak, Carl~G. Jockusch, and Theodore~A. Slaman.
\newblock On the strength of {R}amsey's theorem for pairs.
\newblock {\em J. Symbolic Logic}, 66(1):1--55, 2001, \doi{10.2307/2694910}.

\bibitem{CLWY-2020}
C.~T. Chong, Wei Li, Wei Wang, and Yue Yang.
\newblock On the strength of {R}amsey's theorem for trees.
\newblock {\em Adv. Math.}, 369:107180, 39, 2020, \doi{10.1016/j.aim.2020.107180}.

\bibitem{CHM-2009}
Jennifer Chubb, Jeffry~L. Hirst, and Timothy~H. McNicholl.
\newblock Reverse mathematics, computability, and partitions of trees.
\newblock {\em J. Symb. Log.}, 74(1):201--215, 2009.

\bibitem{CGM-2010}
Jared Corduan, Marcia~J. Groszek, and Joseph~R. Mileti.
\newblock Reverse mathematics and {R}amsey's property for trees.
\newblock {\em J. Symbolic Logic}, 75(3):945--954, 2010.

\bibitem{ACDMP-2022}
Paul-Elliot~Angl{\`e}s d'Auriac, Peter~A. Cholak, Damir~D. Dzhafarov, Benoit
  Monin, and Ludovic Patey.
\newblock {\em Milliken's tree theorem and its applications: a
  computability-theoretic perspective}.
\newblock Memoirs of the American Mathematical Society. American Mathematical
  Society, to appear.

\bibitem{Dobrinen-2018}
Natasha Dobrinen.
\newblock A list of problems on the reverse mathematics of {R}amsey theory on
  the {R}ado graph and on infinite, finitely branching trees.
\newblock \texttt{arXiv:1808.10227 [math.LO]}, 2018.

\bibitem{DDHMS-2016}
Fran{\c{c}}ois~G. Dorais, Damir~D. Dzhafarov, Jeffry~L. Hirst, Joseph~R.
  Mileti, and Paul Shafer.
\newblock On uniform relationships between combinatorial problems.
\newblock {\em Trans. Amer. Math. Soc.}, 368(2):1321--1359, 2016.

\bibitem{DH-2010}
Rodney~G. Downey and Denis~R. Hirschfeldt.
\newblock {\em Algorithmic randomness and complexity}.
\newblock Theory and Applications of Computability. Springer, New York, 2010, \doi{10.1007/978-0-387-68441-3}.

\bibitem{DH-2009}
Damir~D. Dzhafarov and Jeffry~L. Hirst.
\newblock The polarized {R}amsey's theorem.
\newblock {\em Arch. Math. Logic}, 48(2):141--157, 2009, \doi{10.1007/s00153-008-0108-0}.

\bibitem{DM-2022}
Damir~D. Dzhafarov and Carl Mummert.
\newblock {\em {R}everse {M}athematics: {P}roblems, {R}eductions, and
  {P}roofs}.
\newblock Theory and Applications of Computability. Springer Nature, 2022, \doi{10.1007/978-3-031-11367-3}.

\bibitem{DSY-TA}
Damir~D. Dzhafarov, Reed Solomon, and Keita Yokoyama.
\newblock On the first-order parts of problems in the Weihrauch degrees.
\newblock \emph{Computability}, to appear.

\bibitem{Gill-TA}
Kenneth Gill.
\newblock Indivisibility and uniform computational strength.
\newblock \doi{10.48550/arXiv.2312.03919}.

\bibitem{GPV-2021}
Jun~Le Goh, Arno Pauly, and Manlio Valenti.
\newblock Finding descending sequences through ill-founded linear orders.
\newblock {\em J. Symb. Log.}, 86(2):817--854, 2021, \doi{10.1017/jsl.2021.15}.

\bibitem{Hirschfeldt-2014}
Denis~R. Hirschfeldt.
\newblock {\em Slicing the Truth: On the Computable and Reverse Mathematics of
  Combinatorial Principles}.
\newblock Lecture notes series / Institute for Mathematical Sciences, National
  University of Singapore. World Scientific Publishing Company Incorporated,
  2014, \doi{10.1142/9208}.

\bibitem{HJ-2016}
Denis~R. Hirschfeldt and Carl~G. Jockusch, Jr.
\newblock On notions of computability-theoretic reduction between {$\Pi_2^1$}
  principles.
\newblock {\em J. Math. Log.}, 16(1):1650002, 59, 2016, \doi{10.1142/S0219061316500021}

\bibitem{Hirst-1987}
Jeffry~L. Hirst.
\newblock {\em Combinatorics in Subsystems of Second Order Arithmetic}.
\newblock PhD thesis, The Pennsylvania State University, 1987.

\bibitem{KY-2016}
Alexander~P. Kreuzer and Keita Yokoyama.
\newblock On principles between {$\Sigma_1$}- and {$\Sigma_2$}-induction, and
  monotone enumerations.
\newblock {\em J. Math. Log.}, 16(1):1650004, 21, 2016, \doi{10.1142/S0219061316500045}.

\bibitem{MV-2021}
Alberto Marcone and Manlio Valenti.
\newblock The open and clopen {R}amsey theorems in the {W}eihrauch lattice.
\newblock {\em The Journal of Symbolic Logic}, 86(1):316--351, March 2021, \doi{10.1017/jsl.2021.10}.

\bibitem{NP-2018}
Eike Neumann and Arno Pauly.
\newblock A topological view on algebraic computation models.
\newblock {\em J. Complexity}, 44:1--22, 2018, \doi{10.1016/j.jco.2017.08.003}.

\bibitem{Patey-2016a}
Ludovic Patey.
\newblock {\em The reverse mathematics of Ramsey-type theorems}.
\newblock PhD thesis, Universit\'{e} Paris Diderot (Paris 7) Sorbonne Paris Cit\'{e}
\bibitem{Pauly-TA}
Arno Pauly.
\newblock More on the degree of indivisibility of $\mathbb{Q}$.
\newblock To appear.

\bibitem{PPS-TA}
Arno Pauly, C'{e}cilia Pradic, and Giovanni Sold\`{a}.
\newblock On the Weihrauch degree of the additive Ramsey theorem.
\newblock {\em Computability}, to appear, \doi{10.48550/arXiv.2301.02833}.

\bibitem{Simpson-2009}
Stephen~G. Simpson.
\newblock {\em Subsystems of second order arithmetic}.
\newblock Perspectives in Logic. Cambridge University Press, Cambridge, second
  edition, 2009, \doi{10.1017/CBO9780511581007}.

\bibitem{Soare-2016}
Robert~I. Soare.
\newblock {\em Turing Computability: Theory and Applications}.
\newblock Springer Publishing Company, Incorporated, 1st edition, 2016, \doi{10.1007/978-3-642-31933-4}.

\bibitem{SV-2022}
Giovanni Sold\`{a} and Manlio Valenti.
\newblock Algebraic properties of the first-order part of a problem.
\newblock {\em Annals of Pure and Applied Logic}, 174(7):103270, 2023, \doi{10.1016/j.apal.2023.103270}.

\bibitem{Todorcevic-2010}
Stevo Todorcevic.
\newblock {\em Introduction to {R}amsey {S}paces}.
\newblock Princeton University Press, 2010.

\bibitem{Weihrauch-1987}
Klaus Weihrauch.
\newblock {\em Computability}, volume~9 of {\em EATCS Monographs on Theoretical
  Computer Science}.
\newblock Springer-Verlag, Berlin, 1987, \doi{10.1007/978-3-642-69965-8}.

\bibitem{Weihrauch-1992}
Klaus Weihrauch.
\newblock The degrees of discontinuity of some translators between
  representations of the real numbers.
\newblock Technical report {TR}-92-050, International Computer Science
  Institute, Berkeley, 1992.
  

\end{thebibliography}
\end{document}